\title[Prescribing convergence for restarted Block GMRES]{Admissible and attainable convergence behavior with stagnation mirroring in restarted (block) GMRES}
\author[K. M. Soodhalter]{Kirk M Soodhalter}
\address[K. M. Soodhalter]{School of Mathematics, Trinity College Dublin, College Green, Dublin 2, Ireland}
\email{\detokenize{ksoodha@maths.tcd.ie}}
\urladdr{https://math.soodhalter.com}
\thanks{The author's work has been partially supported by Research Ireland grant 22/EPSRC/3857.}
\date{\today}
\dedicatory{}
\begin{document}

% Insert the abstract.
\begin{abstract}
In this work, we describe how to construct matrices and block
right-hand sides that exhibit a specified restarted block \gmres convergence
pattern, such that the eigenvalues and Ritz values at each iteration can be
chosen independent of the specified convergence behavior.  This work is a
generalization of the work in [Meurant and Tebbens, Num.~Alg.~2019] in which
the authors do the same for restarted non-block \gmres.  We use the same tools
as were used in [Kub\'inov\'a and Soodhalter, SIMAX 2020], namely to analyze
block \gmres as an iteration over a right vector space with scalars from the
$^\ast$-algebra of matrices.  To facilitate our work, we also extend the work
of Meurant and Tebbens and offer alternative proofs of some of their
results, that can be more easily generalized to the block setting. 

\end{abstract}

% Insert the keywords.
\keywords{Krylov subspace methods, GMRES, Restarting, Block Krylov subspaces, Admissible convergence behavior}

% Insert the Mathematics Subject Classification.
\makeatletter
\ltx@ifpackageloaded{hyperref}{%
\subjclass[2010]{\href{https://mathscinet.ams.org/msc/msc2020.html?t=65F10}{65F10}, \href{https://mathscinet.ams.org/msc/msc2020.html?t=65N12}{65N12}, \href{https://mathscinet.ams.org/msc/msc2020.html?t=15B57}{15B57}, \href{https://mathscinet.ams.org/msc/msc2020.html?t=45B05}{45B05}, \href{https://mathscinet.ams.org/msc/msc2020.html?t=45A05}{45A05}}
}{%
\subjclass[2010]{65F10, 65N12, 15B57, 45B05, 45A05}
}
\makeatother

% Typeset the opening page.
\maketitle

% Insert the document body.
%%%%% NEW SECTION %%%%%
\section{Introduction}\label{section:introduction}
We consider the solution of 
\begin{align}
	A\bX
	=
	\bB,
	\label{eqn:AXB}
\end{align}
where $A\in\C^{n\times n}$ is a general non-Hermitian matrix, and $\bB\in\C^{n\times p}$ with $p>1$, using a block \gmres
iteration (\blgmres), with a focus on the theoretical properties of a restarted version of \blgmres.  In particular, we extend our
work on characterizing admissible and attainable convergence behavior of \blgmres \cite{KubinovaSoodhalter:2020:1} to the case of
restarted \blgmres. For non-block ($p=1$) restarted \gmres, such a characterization is derived in
\cite{DuintjerTebbensMeurant:2019:1}. To facilitate extension of this work to the restarted \blgmres case, we provide some
simplified versions of proofs in \cite{DuintjerTebbensMeurant:2019:1}. 

\subsubsection*{Non-block notation}
For non-block \gmres discussed in
\Cref{section:admissible-convergence-GMRES,section:admissible-convergence-rGMRES},
we adopt the convention that matrices are denoted by unstyled capital letters, \eg $H$.
Vectors are denoted by bold, lower-case letters, \eg $\bu$.  Scalars are denoted by
unstyled lower-case letters or by lower-case Greek letters, \eg $p$ or $\gamma$.

\subsubsection*{Block notation}
For \blgmres, we follow the approach in \cite{KubinovaSoodhalter:2020:1} and
consider \blgmres as an iteration of matrices and vectors with block entries
coming from a $^\ast$-algebra of $p\times p$ square matrices, \cf
\Cref{section:admissible-convergence-blGMRES}.  We carry over the matrix,
single vector, and scalar notations from the non-block \gmres case. We further
denote multi-column $n\times p$ block vectors (understood as vectors whose
entries are from the $^\ast$-algebra) with bolded capital letters (\eg $\bV$)
and matrices with entries that are $^\ast$-algebra with caligraphic capital
letters, \eg $\cH$.  Matrices whose columns have been generated in terms of
block vectors (denoting basis matrices) are denoted by bold caligraphic capital
letters, \eg $\bcW$.

\subsubsection*{Organization of the paper}
The remainder of the paper is organized as follows.  In
\Cref{section:admissible-convergence-GMRES}, we review \gmres and the
specification of admissible convergence behavior.  In
\Cref{section:admissible-convergence-rGMRES}, we explain how this has been
extended to allow for specification of admissible convergence of restarted
\gmres. We then explain in \Cref{section:admissible-convergence-blGMRES} how
the theory from \Cref{section:admissible-convergence-GMRES} was extended to the
\blgmres case by interpreting it as an iteration over a right-vector space of
block vectors with scalars from a $^\ast$-algebra.  All of this allows us to
extend existing theory to restarted \blgmres in
\Cref{section:admissible-convergence-rblGMRES}. We end by offering some
conclusions in \Cref{section:conclusions}.

%%%%% NEW SECTION %%%%%
\section{\gmres and admissible convergence}\label{section:admissible-convergence-GMRES}
We first consider the case of $p=1$, i.e., applying a \gmres \cite{SaadSchultz:1986:1} iteration to solve 
\begin{align}
	A\bx 
	=
	\bb
	\in\C^n,
	\label{eqn:Axb}
\end{align}
for which we consider \Wlog the trivial initial approximation $\bx_0=\bnull$.
Given $A$ and $\bb$, the $m$th Krylov subspace is defined
\begin{align}
	\cK_j(A,\bb) 
	= 
	\text{span}\paren[auto]{\{}{\}}{\bb, A\bb,\ldots,A^{j-1}\bb}.
	\label{eqn:Krylov-matrix}
\end{align}
Thus, $\bu \in \cK_j(A,\bb)$ is such that
\begin{align*}
	\bu = p(A)\bb
\end{align*}
where $p(x)$ is a polynomial of $\deg p < j$.
\begin{definition}\label{definition:krylov-matrix}
	The basis $\paren[auto]{\{}{\}}{\bb, A\bb,\ldots,A^{j-1}\bb}$ is called a \textbf{Krylov basis} and
a matrix with Krylov basis as columns is called a \textbf{Krylov matrix},
	\begin{align}
		K_j
		=
		\begin{bmatrix}
			\bb & A\bb & \cdots & A^{j-1}\bb
		\end{bmatrix}
		\in\C^{n\times j}.
		\label{eqn:krylov-matrix}
	\end{align}
\end{definition}

At iteration $j$, the iterate $\bx_j=p(A)\bb\in\cK_j(A,\bb)$ is chosen such that the norm of the residual $\br_j=\bb-A\bx_j$ satisfies the GMRES minimum
residual constraint  
\begin{align} 
	\bx_j 
	= 
	\argmin_{\bx\in \cK_j(A,\bb)}\norm{\bb - A\bx}.  
	\label{eqn:GMRES-LS-constraint-full}
\end{align} 
Minimizing the residual is equivalent to setting $\br_j = \bb-A\bx_j=q_j(A)\bb$,
where the polynomial $q_j(z)$ satisfies 
\begin{align*}
	q_j(z) 
	= 
	\argmin_{q\in\cP_j}
	\norm{q(A)\bb}
\end{align*}
with $\cP_j$ being the set of all polynomials 
of degree at most $j$ satisfying $q(0) = 1$.
We note that \gmres is a projection method, and \eqref{eqn:GMRES-LS-constraint-full} can be equivalently formulated as the
residual orthogonality condition 
\begin{align}
	\mbox{select}
	\ 
	\bx_j
	\in
	\cK_j(A,\bb)
	\ 
	\mbox{such that}
	\ 
	\bb - A\bx	
	\perp
	A\cK_j(A,\bb).
	\label{eqn:GMRES-resid-orth-constraint}
\end{align}
In the classical implementation described in \cite{SaadSchultz:1986:1}, one first builds an orthonormal
basis for $\cK_j(A,\bb)$.
The \emph{Arnoldi process} builds this orthonormal basis 
\begin{align*}
	V_j = \begin{bmatrix} \bv_1, & \bv_2 \ldots , & \bv_j \end{bmatrix}\in\C^{n\times j}.
\end{align*}
This basis satisfies the well-known Arnoldi relation
\begin{align}
	AV_j = V_{j+1}\underline{H_j}=V_jH_j + \alpha\bv_{j+1}\be_j^{T}
	\label{eq:arnoldi-relation}
\end{align}
where $\be_j$ is the $j$th canonical basis vector and
$\underline{H_j}\in\C^{(j+1)\times j}$ and $H_j\in\C^{j\times j}$ are
upper-Hessenberg.  We represent the \gmres approximation as $\bx_j=V_j\by_j$,
where $\by_j\in\C^j$ are its coefficients in the Arnoldi basis. We obtain
$\by_m$ by enforcing the constraint \eqref{eqn:GMRES-LS-constraint-full} which
reduces to solving 
\begin{align}
	\by_j = \argmin_{\by\in\R^m}\norm[auto]{\underline{H_j}\by - \norm[auto]{\br_0}\be_1}_2.
	\label{eq:gmres-ls}
\end{align}

%%%%% NEW SUBSECTION %%%%%
\subsection{Admissible and attainable convergence behavior}
There has been much work done to understand what determines the residual convergence behavior of \gmres, by which we mean the
sequence of residual norms $\curly{\norm[auto]{\br_j}}_{j=0}^{n-1}$ generated by a \gmres iteration.  In particular, many results
have been developed to understand the role eigenvalues can play in determining this sequence and its rate of convergence towards
zero for a non-normal matrix $A$.  
\begin{definition}\label{definition:admissible-convergence}
	A positive, monotonically non-increasing sequence $f_0\geq f_1\geq \cdots \geq f_{n-1} > 0$ is called an
	\textbf{admissible} \gmres convergence sequence. 
\end{definition}
It has been proven that any admissible sequence can be realized by \gmres and
that, pathologically, eigenvalues need not play any role in determining
residual convergence behavior of \gmres \cite{GreenbaumPtakStrakos:1996:1}
\footnote{It should be noted that Meurant showed that while the eigenvalues play no role in the 
residual convergence, they do influence the actual error convergence behavior \cite{Meurant:2012:1}}.
Subsequent work refines these results
\cite{ArioliPtakStrakos:1998:1,DuTebbensMeurant:2017:1,DuintjerTebbensMeurant:2019:1,DuintjerTebbensMeurant:2013:1,MeurantDuintjerTebbens:2014:1,TebbensMeurant:2012:1,MatalonSpillane:2025:1},
and in particular it has also been shown that the Ritz values at each iteration
$j$ (i.e., the eigenvalues of $H_j$) can in pathological cases also be
unrelated to the eigenvalues of $A$ and also to the residual convergence
behavior \cite{TebbensMeurant:2012:1}.

\begin{proposition}[paraphrasing results from \cite{GreenbaumPtakStrakos:1996:1,TebbensMeurant:2012:1}]
	\label{proposition:gmres-convergence-specification}
	For any admissible sequence $\braces{f_j}_{j=0}^n$ defined as in
	\Cref{definition:admissible-convergence}, a matrix/right-hand-side pair
	$(A,\bb)$ can be constructed such that $A$ can be specified to have any
	set of eigenvalues; \gmres applied to \eqref{eqn:Axb} with
	$\bx_0=\bnull$ produces the residual norm sequence $\norm[auto]{\br_j}
	= f_j$. If there is no instance of stagnation (\ie $f_j\neq f_{j+1}$
	for $j=0,1,\ldots n-1$), the Ritz values at each step of the underlying
	Arnoldi process similarly can be arbitrarily specified to be any
	non-zero values. If there is an instance of stagnation at some
	iteration, then at least one Ritz value at that iteration must take the
	value of zero.
\end{proposition}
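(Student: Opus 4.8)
The plan is to meet all three requirements at once—prescribed eigenvalues of $A$, prescribed Ritz values at every step, and residual norms exactly $f_j$—at the level of the Arnoldi decomposition \eqref{eq:arnoldi-relation}, using that \gmres is invariant under unitary similarity. I would build an $n\times n$ \emph{unreduced} upper Hessenberg matrix $H$ carrying the prescribed data in its entries, and then set $A = WHW^{\ast}$ and $\bb = f_0\,W\be_1$ for any unitary $W$ (one may take $W = I$). For such a pair the Arnoldi process on $(A,\bb)$ reproduces $V_n = W$ and $H_n = H$; hence $\operatorname{spec}(A) = \operatorname{spec}(H)$ and the Ritz values at step $j$ are exactly the eigenvalues of the leading principal submatrix $H_j$. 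So the whole problem reduces to choosing the entries of $H$.

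\emph{Residual norms $\leftrightarrow$ subdiagonal.} Triangularizing $\underline{H_j}$ in the \gmres least-squares problem \eqref{eq:gmres-ls} by the standard progressive Givens QR gives the classical identity $\norm{\br_j} = \norm{\br_{j-1}}\,\lvert\sigma_j\rvert$, where $\sigma_j$ is the sine of the $j$th rotation and $\lvert\sigma_j\rvert = \lvert h_{j+1,j}\rvert\big/\sqrt{\lvert\widetilde h_{jj}\rvert^2 + \lvert h_{j+1,j}\rvert^2}$, with $\widetilde h_{jj}$ the $(j,j)$ entry of column $j$ after the first $j-1$ rotations (a function of $h_{1j},\ldots,h_{jj}$ alone). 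Thus $\norm{\br_j} = f_j$ is equivalent to $\lvert\sigma_j\rvert = f_j/f_{j-1}$, and whenever $f_{j-1} > f_j$ this determines $\lvert h_{j+1,j}\rvert = \lvert\widetilde h_{jj}\rvert\, f_j / \sqrt{f_{j-1}^2 - f_j^2} \neq 0$ from the already chosen upper part of column $j$; in particular $H$ stays unreduced.

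\emph{Ritz values and eigenvalues $\leftrightarrow$ upper-triangular part.} Expanding the characteristic polynomial of a Hessenberg matrix along its last column yields
\[
	\chi_j(\lambda) \;=\; (\lambda - h_{jj})\,\chi_{j-1}(\lambda) \;-\; \sum_{k=1}^{j-1}\Bigl(\prod_{i=k+1}^{j} h_{i,i-1}\Bigr)\,h_{kj}\,\chi_{k-1}(\lambda), \qquad \chi_0 \equiv 1 .
\]
Once $\chi_0,\ldots,\chi_{j-1}$ and the subdiagonals $h_{21},\ldots,h_{j,j-1}$ are fixed, matching the coefficient of $\lambda^{j-1}$ determines $h_{jj}$, then the coefficient of $\lambda^{j-2}$ determines $h_{j-1,j}$, and so on down to $h_{1j}$—a triangular solve, well-posed because the subdiagonal products are nonzero. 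I would therefore build $H$ column by column: pick the upper part of column $j$ so that $\chi_j$ is the desired monic polynomial (its roots being the prescribed Ritz values at step $j$, and for $j=n$ the prescribed eigenvalues of $A$), then set $h_{j+1,j}$ from the residual-norm formula of the previous step. The coupling is benign: $h_{j+1,j}$ enters $\chi_{j+1},\chi_{j+2},\ldots$ but never $\chi_j$. Taking $A = WHW^{\ast}$, $\bb = f_0 W\be_1$, a direct check that Arnoldi on $(A,\bb)$ returns $(W,H)$ then finishes the argument for the strictly decreasing case.

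The main obstacle is a genuine \emph{stagnation} step, $f_{j-1} = f_j$: there one needs $\lvert\sigma_j\rvert = 1$, hence the Givens cosine $\gamma_j$, equivalently $\widetilde h_{jj}$, must vanish—an extra scalar equation on column $j$ for which the column-by-column scheme has, after matching $\chi_j$, no remaining freedom, so it fails generically. Accommodating stagnation while keeping the Ritz values freely prescribable requires a more elaborate construction—spreading compensating modifications over later columns and treating the boundary patterns $f_{j-1}=f_j=f_{j+1}$, etc.—which is precisely what is carried out in \cite{GreenbaumPtakStrakos:1996:1,TebbensMeurant:2012:1,DuintjerTebbensMeurant:2019:1}; I would invoke those results rather than reproduce the bookkeeping, and the stated proposition follows by assembling their constructions in the Arnoldi/Hessenberg form above.
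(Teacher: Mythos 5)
Your column-by-column construction is sound for the strictly decreasing case, and it is a genuinely different route from the paper's. The paper (following \cite{GreenbaumPtakStrakos:1996:1,TebbensMeurant:2012:1}) works with the factorization \eqref{eqn:A-full-gmres-decomp}, $A=V\,DUCU^{-1}D^{-1}\,V^{\ast}$, in which the three prescriptions live in three separate factors: eigenvalues in the companion matrix $C$, the step-$j$ Ritz values in the $j$th column of $U^{-1}$ (the coefficients of the characteristic polynomial of $H_j$), and the residual norms in $D$ via \eqref{eq:first_row_form} and the expansion of $\bb$ in the $W$-basis with $|h_i|^2=\lVert\br_{i-1}\rVert^2-\lVert\br_i\rVert^2$; the mutual independence of the data is then structural. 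You instead assemble the unreduced Hessenberg matrix directly, fixing the upper part of each column from the Hessenberg characteristic-polynomial recursion and the subdiagonal entry from the Givens-sine identity. That is a legitimate, more algorithmic alternative, and your triangular-solve argument for matching $\chi_j$ is correct.

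The gap is in your treatment of stagnation, and it is conceptual rather than bookkeeping. Your own Givens formula shows that $f_j=f_{j-1}$ holds if and only if $\widetilde h_{jj}=0$, i.e.\ if and only if $H_j$ is singular, i.e.\ if and only if $0$ is a Ritz value at step $j$. So stagnation is not an extra equation that a ``more elaborate construction'' in \cite{GreenbaumPtakStrakos:1996:1,TebbensMeurant:2012:1,DuintjerTebbensMeurant:2019:1} absorbs while keeping the Ritz values freely prescribable: if the prescribed step-$j$ Ritz values omit $0$ while $f_j=f_{j-1}$ (or include $0$ while $f_j<f_{j-1}$, in which case your formula returns $h_{j+1,j}=0$ and unreducedness is lost), then no pair $(A,\bb)$ at all realizes the data; the cited results carry exactly this compatibility condition, which \eqref{eq:first_row_form} encodes through the vanishing of the entry $c_0^{(i-1)}/d_{ii}$ at stagnation steps, and which the proposition's paraphrase glosses over. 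Conversely, once the compatibility holds, your scheme needs no modification: $\chi_j(0)=0$ forces $\widetilde h_{jj}=0$ automatically, the sine equals $1$ for every choice of $h_{j+1,j}$, and any nonzero value keeps $H$ unreduced. Finally, deferring the stagnation case to \cite{GreenbaumPtakStrakos:1996:1,TebbensMeurant:2012:1} is essentially circular here, since the proposition is a paraphrase of precisely those results; the clean fix is to state the compatibility condition explicitly and observe that your construction then covers all admissible sequences.
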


We assume for simplicity that $A$ is diagonalizable, and we restrict ourselves to the case that \gmres runs for a full $n$ iterations
\footnote{The case of early-terminating \gmres was characterized in \cite{DuintjerTebbensMeurant:2013:1}}. The Arnoldi iteration
constructs a sequence of partial orthogonal Hessenberg factorizations of $A$.  Carried to step $j=n$, it generically produces the full factorization
$A = VHV^\ast$, where $V:=V_n\in\C^{n\times n}$ is unitary, and $H:=H_n\in\C^{n\times n}$ is upper Hessenberg and similar to $A$.  Let
$C\in\C^{n\times n}$ be the companion matrix associated to the characteristic polynomial of $A$, $p_A(z)=\det(zI - A)=z^n -
\sum_{i=0}^{n-1} c_i z^i$. This matrix has the same eigenvalues as $A$ with multiplicity, and $\deg p_A= n$.  This matrix has the
structure
\begin{align*}
	C
	=
	\begin{bmatrix}
		0 & 0 & \dots & 0 & c_0 \\
		1 & 0 & \dots & 0 & c_1 \\
		0 & 1 & \dots & 0 & c_2 \\
		\vdots & \vdots & \ddots & \vdots & \vdots \\
		0 & 0 & \dots & 1 & c_{n-1}
	\end{bmatrix}
\end{align*}
and is also similar to $A$.  Extending the Krylov
matrix \eqref{eqn:krylov-matrix} to the $j=n$ case (denoting $K:=K_n$), we have the Krylov companion matrix relation 
\begin{align*}
	AK
	=
	KC
	\iff 
	A 
	=
	KCK^{-1}.
\end{align*}
We note that the full set of Arnoldi vectors is the unitary factor in a \qr-factorization of $K$; i.e.,
\begin{align*}
	K 
	= 
	V
	(DU)
\end{align*}
with the upper triangular factor being decomposed into a diagonal component $D$
and a unit-diagonal upper triangular component $U$. This allow us to express the
coefficient matrix according to the decomposition
\begin{align}
	A
	=
	V\underbrace{DUCU^{-1}D^{-1}}_{H}V^{\ast}.
	\label{eqn:A-full-gmres-decomp}
\end{align}
 
For every iteration $j$ except $j=n$, there are $j$ Ritz values.  It has been
shown that the strictly upper triangular entries  of each column of
\begin{align*}
	U^{-1}
	=
	\begin{bmatrix}
	    1 & c_0^{(1)} & c_0^{(2)} & c_0^{(3)} & \cdots & c_0^{(n-1)} \\
	    0 & 1 & c_1^{(2)} & c_1^{(3)} & \cdots & c_1^{(n-1)} \\
	    0 & 0 & 1 & c_2^{(3)} & \cdots & c_2^{(n-1)} \\
	    0 & 0 & 0 & 1 & \cdots & \vdots \\
		\vdots & \vdots & \vdots & \vdots & \ddots & c_{n-2}^{(n-1)} \\
	    0 & 0 & 0 & 0 & 0 & 1
	\end{bmatrix}
\end{align*}
are the coefficients of the characteristic polynomials of $H_j$ for each $j$
\cite{TebbensMeurant:2012:1}.  In other words, the characteristic polynomial of
$H_j$ for $1\leq j < n$ is
\begin{align*}
	p_j(z)
	=
	\det(zI - H_j)
	=
	z^j - \sum_{i=0}^{j-1} c_i^{(j)} z^i
	.
\end{align*}
  
Once the Hessenberg matrix $H$ is constructed, the Arnoldi vectors can be specified by performing a unitary transformation with $V$, the matrix of Arnoldi vectors.  However,  it is more useful to consider
fixing the orthonormal basis for the \emph{residual} Krylov subspace. As we only consider the case in which we construct $A$ to be
non-singular, we observe that this is equivalent to specifying $\bV$. Let $\paren[auto]{\{}{\}}{\bw_1,\bw_2,\ldots,\bw_n}$ be orthonormal basis
for $\C^n$, which we take to be the columns of unitary $W\in\C^{n\times n}$ such that  
\begin{align*}
	\mathrm{span}\paren[auto]{\{}{\}}{\bw_1,\bw_2,\ldots,\bw_j} = A\cK_j(A,\bb)
\end{align*}
for all $j\leq n$.

The specification of an admissible \gmres residual convergence is used to
determine an appropriate $\bb$.    If we express the right-hand side with
respect to this basis $\bb=\sum_{i=1}^n h_i\bw_i$, then we observe that the
residual orthogonality constraint \eqref{eqn:GMRES-resid-orth-constraint}
implies that $\br_j=\sum_{i=j+1}^n h_i\bw_i$. Exploiting basic properties of
norms and orthonormal bases, it is observed in
\cite{GreenbaumPtakStrakos:1996:1} that we can thus write $|h_i|^2 =
\norm[auto]{\br_{i-1}}^2 - \norm[auto]{\br_i}^2$, for each $i$.  Thus we can
construct $\bb$ in this basis by determining the $n$ coefficients
$\curly{h_i}$, and we can choose them to obtain the wanted \gmres residual
convergence behavior.  It is shown how to relate the diagonal entries of
$D$ to the specified \gmres convergence pattern,  
\begin{align}
	\be_1^T(DU)^{-1}\be_1 
	=& 
	\be_1^T(DU)^{-\ast}\be_1 
	=
	\be_1^TD^{-1}\be_1 
	=
	\norm[auto]{
		\br_0}^{-1
	}
	\nonumber
	\\
	|\be_i^T(DU)^{-*}\be_1| 
	=&
	\sqrt{\dual[auto]{\br_{i-1}}{\br_{i-1}}^{-1} -\dual[auto]{\br_{i-2}}{\br_{i-2}}^{-1}\,},
	\ 
	i=2,\dots,j.
	\label{eq:first_row_form}
\end{align}

The constructions in \cite{TebbensMeurant:2012:1} show that the Arnoldi basis $V$
and the orthogonal residual Krylov basis $W$ can be related to one another using elements of the matrix $U$ and the prescribed
convergence behavior.  This means that if one specifies $W$ and all other quantities, then the Arnoldi vectors $V$ are uniquely
determined.  Conversely, if along with eigenvalues, Ritz values and a prescribed \gmres residual convergence pattern, the full set
of Arnoldi vectors $V$ is specified, then this uniquely specifies $W$ in such a way that $\bv_1 = \bb/\norm[auto]{\bb}$ and
$\bb=\sum_{i=1}^n h_i\bw_i$.  The precise relationships between the bases $V$ and $W$ are shown in, \eg \cite{KubinovaSoodhalter:2020:1,TebbensMeurant:2012:1}.

%%%%% NEW SECTION %%%%%
\section{Prescribing behavior of restarted \gmres}\label{section:admissible-convergence-rGMRES}
In \cite{DuintjerTebbensMeurant:2019:1}, the authors extend this framework to
generate pairs $(A,\bb)$ that generate a prescribed admissible restarted \gmres
behavior. By restarted \gmres, we mean after running $m$ iterations of \gmres,
the process is truncated, all Arnoldi vectors are discarded, and the residual
$\br_m$ is used as the initial residual for a new cycle of \gmres. This work
has been extended in \cite{MeurantDuintjerTebbens:2020:1} and now also treats
the case of non-diagonalizable matrices. We also note that general theory and
construction of Hessenberg matrices has been treated in the recent book
\cite{Meurant:2025:1}. As this is framed in terms of a matrix decomposition, the authors of
\cite{DuintjerTebbensMeurant:2019:1} also assume that the restarted iteration
terminates within $n$ iterations. In this setting, the authors determine some
restrictions on admissible convergence sequences for restarted \gmres.  As we
are extending the results from \cite{DuintjerTebbensMeurant:2019:1} to the case
of restarted \blgmres, we give a summary of relevant results.  However, we also
provide some new proofs of these results to facilitate extension to the
restarted \blgmres case.  We offer them also for general mathematical interest.  

We denote quantities corresponding to the $k$th restart cycle by superscript $\cdot^{(k)}$. To simplify notation, we assume, without loss of generality, that all the cycles have
the same length $m$ and that $n/m = \ell\in\N$. All the results can be generalized to restarted \gmres with variable cycle length in a
straightforward way.

%%%%% NEW SUB-SECTION %%%%%
\subsection{Admissible convergence and the mirroring of stagnation}
In \cite[Section 2]{DuintjerTebbensMeurant:2019:1}, the authors observe that
unlike a full \gmres iteration, not every sequence of non-increasing residual
norms is possible for restarted \gmres.  Instead, they prove that if a restart
cycle ends with $s$ iterations of residual norm stagnation, the next
cycle must begin with $s$ additional iterations of stagnation. We present a
simplification of their proof.

%%%%%%%%%% PROPOSITION %%%%%%%%%%
\begin{proposition}\label{prop:simplified-stagnation-mirroring}
	Let there be stagnation of length $s$ at the end of the $k$th cycle, i.e.
	\begin{align}\label{eq:mirror-end}
		\norm[auto]{
			\br^{(k)}_{m-s}
		} 
		=
		\norm[auto]{
			\br^{(k)}_{m}
		},
	\end{align}
	then there is stagnation of the same length at the beginning of the $(k+1)$st cycle, i.e.,
	\begin{align*}%\label{eq:mirror_begin}
		\norm[auto]{
			\br^{(k+1)}_{0}
		} 
		=
		\norm[auto]{
			\br^{(k+1)}_{s}
		}. 
	\end{align*}
\end{proposition}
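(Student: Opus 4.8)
The plan is to combine two structural facts about restarted \gmres: the restart identity $\br^{(k+1)}_0=\br^{(k)}_m$, and the fact that within any cycle the $j$th residual equals $p(A)$ applied to that cycle's initial residual, where $p\in\cP_j$ is the norm-minimal such polynomial. These two facts, together with the monotone non-increase of \gmres residual norms and elementary degree bookkeeping, should suffice.

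\textbf{Step 1: stagnation at the end of cycle $k$ is an equality of residual vectors, not just of norms.} Since \gmres residual norms decrease monotonically within a cycle, \eqref{eq:mirror-end} forces $\norm[auto]{\br^{(k)}_i}=\norm[auto]{\br^{(k)}_m}$ for every $i$ with $m-s\le i\le m$. I would then upgrade the case $i=m-s$ to $\br^{(k)}_{m-s}=\br^{(k)}_m$: both $\br^{(k)}_0-\br^{(k)}_{m-s}$ and $\br^{(k)}_0-\br^{(k)}_m$ lie in $A\cK_m(A,\br^{(k)}_0)$, hence so does their difference $\br^{(k)}_{m-s}-\br^{(k)}_m$, which is therefore orthogonal to $\br^{(k)}_m$ by \eqref{eqn:GMRES-resid-orth-constraint}; the Pythagorean identity $\norm[auto]{\br^{(k)}_{m-s}}^2=\norm[auto]{\br^{(k)}_m}^2+\norm[auto]{\br^{(k)}_{m-s}-\br^{(k)}_m}^2$ and the equal norms then give $\br^{(k)}_{m-s}=\br^{(k)}_m$. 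Consequently $\br^{(k+1)}_0=\br^{(k)}_m=q(A)\br^{(k)}_0$ with $q\in\cP_{m-s}$ the $(m-s)$th \gmres polynomial of cycle $k$.

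\textbf{Step 2: transfer to cycle $k+1$ by polynomial composition.} For $0\le j\le s$ write $\br^{(k+1)}_j=p^{(k+1)}_j(A)\br^{(k+1)}_0$ with $p^{(k+1)}_j\in\cP_j$ the optimal cycle-$(k+1)$ polynomial. Composing with Step 1 gives $\br^{(k+1)}_j=(p^{(k+1)}_j q)(A)\br^{(k)}_0$, and $p^{(k+1)}_j q\in\cP_{j+m-s}$ (degree $\le j+(m-s)$, value $1$ at $0$). Since $j+m-s\le m$, minimality of the $(j+m-s)$th \gmres residual of cycle $k$ over $\cP_{j+m-s}$ yields $\norm[auto]{\br^{(k+1)}_j}\ge\norm[auto]{\br^{(k)}_{j+m-s}}$, and because $m-s\le j+m-s\le m$, Step 1 gives $\norm[auto]{\br^{(k)}_{j+m-s}}=\norm[auto]{\br^{(k)}_m}=\norm[auto]{\br^{(k+1)}_0}$. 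Hence $\norm[auto]{\br^{(k+1)}_j}\ge\norm[auto]{\br^{(k+1)}_0}$, while monotonicity gives the reverse inequality; therefore $\norm[auto]{\br^{(k+1)}_j}=\norm[auto]{\br^{(k+1)}_0}$ for all $0\le j\le s$, and $j=s$ is the assertion.

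The one delicate point is Step 1 — promoting equality of residual norms at the end of cycle $k$ to equality of the residual \emph{vectors}, so that $\br^{(k+1)}_0$ is realized by a polynomial of degree only $m-s$ rather than $m$; this degree deficit is precisely what leaves ``room'' for $s$ forced steps of stagnation in the next cycle. Everything else is minimality of \gmres over the nested family $\{\cP_j\}$ plus degree counting. Since the argument uses only the minimum-residual property and the restart identity, I expect it to transfer essentially verbatim to \blgmres once $\cP_j$ and \gmres polynomials are replaced by their $^\ast$-algebra-valued analogues.
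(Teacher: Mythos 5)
Your proposal is correct and follows essentially the same route as the paper: write $\br^{(k+1)}_0$ as $q(A)\br^{(k)}_0$ with $q\in\cP_{m-s}$, compose with the cycle-$(k+1)$ minimizing polynomials, and invoke minimality over $\cP_m$ together with monotonicity to force equality. The only difference is that your Step 1 explicitly justifies, via orthogonality to $A\cK_m(A,\br^{(k)}_0)$ and the Pythagorean identity, the vector equality $\br^{(k)}_{m-s}=\br^{(k)}_m$ that the paper asserts without comment when it writes $\br^{(k)}_m=\hat{p}_{m-s}(A)\br^{(k)}_0$ — a worthwhile clarification, not a deviation.
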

The simplified proof observes that the mirroring of stagnation is a consequence of the polynomial minimization characterization of
\gmres, and this makes the result easier to comprehend. Furthermore, this proof is easier to extend 
to the restarted \blgmres setting.
%%%%%%%%%% PROOF %%%%%%%%%%
\begin{proof}[Proof of \Cref{prop:simplified-stagnation-mirroring}]
	Let us assume \eqref{eq:mirror-end} holds. When considering stagnation
	in terms of the \gmres polynomial minimization problem, it follows that
	\begin{align*}
		\min_{p\in\cP_m}\norm[auto]{p(A)\br^{(k)}_{0}} 
		=
		\min_{p\in\cP_{m-s}}\norm[auto]{p(A)\br^{(k)}_{0}}.
	\end{align*}
	Denoting the specific minimization polynomial from the stagnating iteration 
	\begin{align*}
		\hat{p}_{m-s} 
		= 
		\argmin_{p\in\cP_{m-s}}\norm[auto]{p(A)\br^{(k)}_{0}},
	\end{align*}
	we obtain an expression for the residual 
	\begin{align*}
		\br^{(k+1)}_{0} 
		=
		\br^{(k)}_{m}
		=
		\hat{p}_{m-s}(A)r^{(k)}_{0}.
	\end{align*}
	We conclude the proof by studying the necessary structure of the minimizing
	polynomial after $s$ iterations in cycle $k+1$,
	\begin{align*}
		\norm[auto]{
			\br_{s}^{(k+1)}
		} 
		=
		\min_{p\in\cP_s}\norm[auto]{p(A)\br^{(k+1)}_{0}}
		=&
		\min_{p\in\cP_s}\norm[auto]{p(A)\hat{p}_{m-s}(A)\br^{(k)}_{0}}
		\\
		\geq&
		\min_{p\in\cP_m}\norm[auto]{p(A)\br^{(k)}_{0}} 
		=
		\norm[auto]{
			\br^{(k)}_{m}
		}.
	\end{align*}
	Due to the minimum residual property of \gmres, it must follow from the inequality that $\norm[auto]{\br^{(k+1)}_{s} } =
	\norm[auto]{\br^{(k)}_{m}}$.
\end{proof}
\begin{remark}
	One implication of this result is that if $s$ iterations of stagnation occur at the end of a \gmres cycle, subsequent
	cycles of length $\leq s$ provide no improvement of the residual.
\end{remark}

%%%%% NEW SUB-SECTION %%%%%
\subsection{Building a restarted \gmres factorization}\label{section:build-restarted-kryl-factorization}
We follow the approach taken in \cite{DuintjerTebbensMeurant:2019:1}, which is to build a factorization of $A$ using a restarted
version of the Krylov matrix from \Cref{definition:krylov-matrix} and show that arbitrary admissible behavior of restarted \gmres can be specified
independent of eigenvalues of $A$ and of all the sets of Ritz values generated during this iteration.  From the notation we use
for each cycle of restarted \gmres, it is clear that each cycle induces its own Krylov matrix, i.e., restart cycle $j$ generates
Krylov matrix $K^{(j)}$ defined according to
\begin{align*}
	K^{(1)}
	:=& 
	\begin{bmatrix}
		\bb & A\bb & \cdots & A^{m-1}\bb
	\end{bmatrix}
	\\
	K^{(2)}
	:=& 
	\begin{bmatrix}
		\br_m^{(1)} & A\br_m^{(1)} & \cdots & A^{m-1}\br_m^{(1)}
	\end{bmatrix}
	\\
	\vdots&\\
	K^{(\ell)}
	:=& 
	\begin{bmatrix}
		\br_m^{(\ell-1)} & A\br_m^{(\ell-1)} & \cdots & A^{m-1}\br_m^{(\ell-1)}
	\end{bmatrix}.
\end{align*}
\begin{definition}\label{definition:restarted-krylov-matrix}
	The columns of $K^{(1)}$, $K^{(2)}$, \ldots, $K^{(\ell)}$ taken together span some subspace of $\C^n$. 
	\begin{itemize}
		\item 
			If they are linearly independent, we call this
			collection of vectors a \textbf{restarted Krylov basis}
			of $\C^n$.
		\item 
			The matrix that has these vectors as its (possibly
			linearly dependent) columns
			\begin{align}
				\widetilde{K} 
				:=&
				\begin{bmatrix}
					K^{(1)} & K^{(2)} & \cdots & K^{(\ell)}
				\end{bmatrix} 
				\in\C^{n\times n}.
				\label{eq:rest-kryl-mat}
			\end{align}
			is called a \textbf{restarted Krylov matrix}.
	\end{itemize}
\end{definition}
We note that the analysis of restarted Krylov subspace convergence in the
context of understanding the range of the restarted Krylov matrix as compared
to the Krylov matrix has been considered in, \eg, \cite{DuintjerTebbensMeurant:2019:1,Simoncini:2000:1}.
%%%%%%%%%% PROPOSITION %%%%%%%%%%
\begin{proposition}\label{proposition:krylov-matrix-rank}
	Suppose that the Krylov matrix satisfies the assumption 
	\begin{align*}
		\rank\,({K}) 
		=
		n.
	\end{align*}	
	Then it follows that the restarted Krylov matrix \eqref{eq:rest-kryl-mat} is full rank, \ie
	\begin{align*}
		\rank\,(\widetilde{K}) 
		=
		n,
	\end{align*}
	if and only if there is no stagnation at the end of any of the restart cycles.
\end{proposition}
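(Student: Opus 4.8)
The plan is to replace the blocks $K^{(1)},\dots,K^{(\ell)}$ by polynomials in $A$ applied to $\bb$. The hypothesis $\rank K=n$ says exactly that $\bb,A\bb,\dots,A^{n-1}\bb$ is a basis of $\C^n$, i.e.\ that the minimal polynomial of $\bb$ with respect to $A$ has degree $n$; equivalently, the evaluation map $p\mapsto p(A)\bb$ is injective on the $n$-dimensional space of polynomials of degree $<n$. Consequently, for any polynomials $q_1,\dots,q_r$ of degree $<n$, the vectors $q_1(A)\bb,\dots,q_r(A)\bb$ are linearly independent if and only if the polynomials themselves are.

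Next I would translate $\widetilde{K}$. Write $\phi_k\in\cP_m$ for the residual polynomial of the $k$th cycle, so $\br^{(k)}_m=\phi_k(A)\br^{(k)}_0$, and set $\Phi_k:=\phi_k\phi_{k-1}\cdots\phi_1$ with $\Phi_0:=1$; then $\br^{(k)}_m=\Phi_k(A)\bb$, $\Phi_k(0)=1$, and $\deg\Phi_k\le km$. The columns of $\widetilde{K}$ are the $n$ vectors $A^i\Phi_k(A)\bb$ with $0\le i<m$ and $0\le k<\ell$, that is, $q(A)\bb$ with $q$ ranging over the family $\{z^i\Phi_k:0\le i<m,\ 0\le k<\ell\}$, and each such $q$ has degree at most $(\ell-1)m+(m-1)=n-1$. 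By the previous paragraph, $\rank\widetilde{K}=n$ if and only if this family of $n$ polynomials is linearly independent. Note that $\phi_\ell$ never appears, so only cycles $1,\dots,\ell-1$ can influence $\rank\widetilde{K}$; moreover, under the standing assumption that the iteration stops within $n$ steps, the final cycle converges exactly at step $n$ (since $\rank K=n$ forces $\br_j\ne\bnull$ for $j<n$) and hence does not stagnate at its end, so ``end-of-cycle stagnation'' and ``end-of-cycle stagnation in one of cycles $1,\dots,\ell-1$'' amount to the same thing.

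The bridge to stagnation is the observation that cycle $k$ ends with stagnation, $\norm{\br^{(k)}_{m-1}}=\norm{\br^{(k)}_m}$, if and only if $\deg\phi_k<m$. If $\deg\phi_k<m$, then $\phi_k$ is a competitor in the degree-$(m-1)$ minimisation of the cycle, so $\norm{\br^{(k)}_{m-1}}\le\norm{\phi_k(A)\br^{(k)}_0}=\norm{\br^{(k)}_m}$, and monotonicity forces equality. Conversely, equality of these norms forces $\br^{(k)}_{m-1}=\br^{(k)}_m$ (the minimal-norm element of an affine subspace is unique), so by nonsingularity of $A$ the $m$th iterate already lies in $\cK_{m-1}(A,\br^{(k)}_0)$ and $\br^{(k)}_m=q(A)\br^{(k)}_0$ for some $q\in\cP_{m-1}$; since the minimal polynomial of $\br^{(k)}_0=\Phi_{k-1}(A)\bb$ has degree at least $n-(k-1)m\ge2m>m$ when $k\le\ell-1$, the polynomial representative of $\br^{(k)}_m$ is unique, so $\phi_k=q$ has degree $<m$.

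Now I would assemble the two implications. If no cycle stagnates at its end, then $\deg\phi_k=m$ for all $k$, so $\deg\Phi_k=km$ and $\deg(z^i\Phi_k)=km+i$; as $(k,i)$ ranges over $\{0,\dots,\ell-1\}\times\{0,\dots,m-1\}$ these degrees run exactly through $0,1,\dots,n-1$, so the $n$ polynomials $z^i\Phi_k$ are nonzero with pairwise distinct degrees, hence linearly independent, and $\rank\widetilde{K}=n$. Conversely, if some cycle $j$ with $1\le j\le\ell-1$ stagnates at its end, then $\deg\phi_j<m$, so $\Phi_j=\phi_j\Phi_{j-1}$ is a linear combination of $z^0\Phi_{j-1},z^1\Phi_{j-1},\dots,z^{m-1}\Phi_{j-1}$; thus the family member $z^0\Phi_j=\Phi_j$ lies in the span of $m$ other family members, so the $n$ polynomials span a space of dimension at most $n-1$, and $\rank\widetilde{K}<n$. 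The step I expect to need the most care is the uniqueness bookkeeping in the stagnation criterion --- the lower bound on the degree of the minimal polynomial of $\br^{(k)}_0$ that makes the cycle's residual polynomial unique --- which is exactly where $\rank K=n$ and the restriction $k\le\ell-1$ get used; the remainder is elementary degree counting.
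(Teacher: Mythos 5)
Your proof is correct, and it takes a genuinely different route from the paper's, although both hinge on the same core observation that end-of-cycle stagnation means the next cycle's starting residual already lies in the current cycle's Krylov subspace. You transport the whole question into polynomial language: since $\rank(K)=n$, the evaluation map $p\mapsto p(A)\bb$ is injective on polynomials of degree less than $n$, so $\rank(\widetilde{K})=n$ is equivalent to linear independence of the $n$ polynomials $z^i\Phi_k$; you then characterize stagnation at the end of cycle $k$ as the degree drop $\deg\phi_k<m$ of the cycle residual polynomial (unique, as you verify through the lower bound $n-(k-1)m\geq 2m$ on the degree of the minimal polynomial of $\br_0^{(k)}$ for $k\leq\ell-1$), after which the equivalence is elementary degree counting, with the stagnating case exhibited as an explicit linear dependence of $\Phi_j$ on $z^0\Phi_{j-1},\dots,z^{m-1}\Phi_{j-1}$. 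The paper argues instead directly with subspaces: under stagnation, $\br_m^{(k)}=\br_{m-1}^{(k)}\in\cK_m(A,\br_0^{(k)})$, so the next cycle's columns add no rank; without stagnation, $\br_m^{(1)}\notin\cK_m(A,\bb)$ gives $A^j\br_m^{(1)}\notin\cK_{m+j}(A,\bb)$ and an induction over cycles. Your version buys explicitness: the rank deficiency appears as a single concrete dependence among the columns, the uniqueness of the polynomial representation (which is where $\rank(K)=n$ is genuinely used) is accounted for, and you correctly flag the last-cycle subtlety---stagnation at the end of cycle $\ell$ cannot affect $\widetilde{K}$ at all, so that part of the ``only if'' direction must come from the section's standing assumption that the restarted iteration terminates within $n$ steps, a point on which the paper's proof is silent. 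What the paper's subspace formulation buys in return is brevity and a form that transfers almost verbatim to the block setting of \Cref{section:build-restarted-block-kryl-factorization}, where your scalar degree-counting would need to be redone with matrix-valued polynomials and block grades.
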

%%%%%%%%%% PROOF %%%%%%%%%%
\begin{proof}
	From the definition of the restarted Krylov matrix, we have
	\begin{align*}
		\range(\widetilde{K})\subseteq\range({K}).
	\end{align*}
	Without loss of generality, we consider one iteration of stagnation at the end of the cycle.
	If there is stagnation at the end of the $k$th cycle, then
	\begin{align*}
		\br_m^{(k)} = \br_{m-1}^{(k)}: 
		=
		\hat{p}_{m-1}(A)\br_0^{(k)} \in\cK_{m}(A,\br_0^{(k)}),
	\end{align*}
	and this leads to no increase in rank, i.e.,  
	\begin{align*}
		\rank\paren[auto]{(}{)}{
			\begin{bmatrix}
				K^{(k)} & \br_m^{(k)}
			\end{bmatrix}
		} 
		\leq 
		m. 
	\end{align*} 
	
	Conversely, if there is no stagnation at the end of any restart cycle,
	we proceed by induction. It follows that $K^{(1)}$ has full rank since
	$K$ has full rank. Furthermore, since there is no stagnation at the end
	of the first cycle, the residuals satisfy 
	\begin{align*}
		\Span\braces{\br_0^{(1)}, \br_1^{(1)}, \ldots, \br_m^{(1)}} 
		= 
		\cK_{m+1}(A,\bb), 
	\end{align*}
	and thus $\br_m^{(1)}\not\in \cK_{m}(A,\bb)$.
	Therefore, $A^j\br_m^{(1)} \not\in \cK_{m+j}(A,\bb)$, $j =
	0,\ldots,m-1$. This gives
	\begin{align*}
		\rank\paren[auto]{(}{)}{
			\begin{bmatrix}
				K^{(1)} & K^{(2)}
			\end{bmatrix}
		} 
		=
		2m,
	\end{align*}
	and we proceed analogously for the subsequent cycles.
\end{proof}

%%%%% NEW SUB-SECTION %%%%%
\subsection{Residual components in the Arnoldi vectors}
For a \gmres iteration, we express the residual $\br_m$ in terms of Arnoldi vectors computed by the
restarted Arnoldi algorithm. Recall  that the upper Hessenberg matrix $H$ generated by the
Arnoldi algorithm run to completion can be factorized as shown in \eqref{eqn:A-full-gmres-decomp}.  It was shown in \cite[Lemma
2]{DuintjerTebbensMeurant:2019:1} that for all $j$, principal submatrices $H_j$ of $H$ also exhibit such a factorization,
\begin{align}
	H_j
	=
	D_jU_jC_j\left(D_jU_j\right)^{-1},
	\label{eq:Hess_fact}
\end{align}
where $U_j^{-1}$ and $D_j$ are the $j\times j$ principal submatrices of $U^{-1}$ and $D$, respectively. 
We present a lemma (\cite[Lemma 2]{DuintjerTebbensMeurant:2019:1}) showing how the \gmres residual
can be expressed in terms of these quantities, along with a simplified proof. 
\begin{lemma}
	Let $D_m$ and $U_m$ be the factors from \eqref{eq:Hess_fact} for $H_m$ at iteration $m$. Then it follows that
	\begin{align}\label{eq:residual-coeff}
		\br_m 
		= 
		V_{m+1}\bg_m, \quad \text{where} \quad \bg_m :
		=
		\brak{\be_1^T(D_mU_m)^{-1}}^*\dual[auto]{\br_m}{\br_m},
	\end{align}
	where $\cdot^\ast$ denotes taking the conjugate transpose.
\end{lemma}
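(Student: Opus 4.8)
The plan is to reduce the identity to a single triangular solve, by combining the Galerkin characterization of \gmres with the \qr-factorization of the Krylov matrix. Assume $\br_m\neq\bnull$ (the statement being trivial when $\br_m=\bnull$); then \gmres has not converged by step $m$, so $\dim\cK_{m+1}(A,\bb)=m+1$. Let $K_{m+1}=\begin{bmatrix}\bb & A\bb & \cdots & A^m\bb\end{bmatrix}\in\C^{n\times(m+1)}$ be the Krylov matrix of \Cref{definition:krylov-matrix} (one column beyond the cycle length $m$): it has full column rank, its columns span $\cK_{m+1}(A,\bb)$, and its columns $2,\dots,m+1$ span $A\cK_m(A,\bb)$. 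Since $\br_m\in\cK_{m+1}(A,\bb)=\range(V_{m+1})$, it suffices to show that $V_{m+1}^*\br_m$ equals the right-hand side of the asserted formula, for then $V_{m+1}\bg_m=V_{m+1}V_{m+1}^*\br_m=\br_m$.

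I would then carry out three short steps. First, record that $V_{m+1}$ is, up to the column-phase normalization already used in writing $K=V(DU)$ before \eqref{eqn:A-full-gmres-decomp}, the unitary factor of a \qr-factorization $K_{m+1}=V_{m+1}R$, with $R$ upper triangular and invertible; by the nesting of the Krylov matrices and \eqref{eq:Hess_fact}, $R$ is the factor written $D_mU_m$ in the statement, taken of the size matching $V_{m+1}$. Second, compute $K_{m+1}^*\br_m$: for $i=1,\dots,m$ the column $A^i\bb$ lies in $A\cK_m(A,\bb)$, to which $\br_m$ is orthogonal by \eqref{eqn:GMRES-resid-orth-constraint}, so those entries vanish; and $\bb-\br_m=A\bx_m\in A\cK_m(A,\bb)$ is also orthogonal to $\br_m$, so the first entry equals $\dual[auto]{\br_m}{\br_m}$. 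Hence $K_{m+1}^*\br_m=\dual[auto]{\br_m}{\br_m}\,\be_1$. Third, substitute $V_{m+1}=K_{m+1}R^{-1}$; since then $V_{m+1}^*=R^{-*}K_{m+1}^*$, the previous step gives $V_{m+1}^*\br_m=\dual[auto]{\br_m}{\br_m}\,R^{-*}\be_1=\brak{\be_1^T R^{-1}}^*\dual[auto]{\br_m}{\br_m}$, which with $R=D_mU_m$ is exactly the right-hand side of the asserted formula.

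The step I expect to be the main obstacle is not any of these computations but the index bookkeeping underneath them: the residual after $m$ \gmres steps genuinely lives in the $(m+1)$-dimensional space $\cK_{m+1}(A,\bb)$, generically with a nonzero component along the $(m+1)$st Arnoldi vector, so $\bg_m$ and the triangular factor $R$ must be taken of the matching size, and one must verify that this $R$ coincides with the factor $D_mU_m$ attached to $H_m$ through \eqref{eq:Hess_fact} under the conventions in force. Once that identification is pinned down, the statement is just residual orthogonality together with one forward substitution — which is precisely what makes this argument shorter than the corresponding one in \cite[Lemma 2]{DuintjerTebbensMeurant:2019:1}.
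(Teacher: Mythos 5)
Your argument is correct, and it takes a genuinely different route from the one in the paper. The paper's proof proceeds in two stages: it first uses the known form \eqref{eq:first_row_form} of the first row of $(DU)^{-1}$ to verify $\dual[auto]{\bg_m}{\bg_m}=\dual[auto]{\br_m}{\br_m}$, and then establishes $\dual[auto]{\br_m}{V_{m+1}\bg_m}=\dual[auto]{\br_m}{\br_m}$ by writing $\br_m = V_{m+1}\bigl(I_{m+1}-\underline{H_m}\,\underline{H_m}^\dagger\bigr)\be_1\norm[auto]{\br_0}$ and computing the projector $\underline{H_m}\,\underline{H_m}^\dagger$ explicitly through the factorization \eqref{eq:Hm-rect-decomp} and the compressed inverse of the companion matrix, concluding via uniqueness of the minimizer. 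You instead bypass the Hessenberg/pseudoinverse machinery entirely: since $\br_m\in\range(V_{m+1})$, it suffices to compute $V_{m+1}^{\ast}\br_m$, and the Galerkin condition \eqref{eqn:GMRES-resid-orth-constraint} gives $K_{m+1}^{\ast}\br_m=\dual[auto]{\br_m}{\br_m}\be_1$ (the first entry because $\bb-\br_m=A\bx_m\in A\cK_m(A,\bb)$), after which $V_{m+1}=K_{m+1}(D_{m+1}U_{m+1})^{-1}$ converts this into the claimed formula by one triangular substitution. What your route buys is economy and transparency: it needs only residual orthogonality and the nested \qr-factorization $K=V(DU)$ from \eqref{eqn:A-full-gmres-decomp}, it does not invoke \eqref{eq:first_row_form} (indeed it re-derives that information as a byproduct), and it transfers verbatim to the block setting of \Cref{lemma:block-residual-coefficients}, where block residual orthogonality and the block \qr play the same roles; what the paper's route buys is that it stays within the Hessenberg-matrix framework of \cite{DuintjerTebbensMeurant:2019:1}, which is the object actually manipulated in the later constructions. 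Two small points you handle correctly but should keep explicit: the conclusion $\dim\cK_{m+1}(A,\bb)=m+1$ from $\br_m\neq\bnull$ uses the standing assumption that $A$ is nonsingular (otherwise stagnation without invariance is possible), and the factor appearing in the statement must indeed be read as the $(m+1)\times(m+1)$ principal block $D_{m+1}U_{m+1}$ of $DU$ rather than the $m\times m$ one attached to $H_m$ --- the paper's own proof and the block analogue \eqref{eqn:kth-block-residual-coefficients} confirm this reading, so your size bookkeeping resolves a genuine notational wrinkle rather than introducing one.
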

%%%%%%%%%% PROOF %%%%%%%%%%
\begin{proof}
	This proof simplifies the proof of \cite[Lemma 2]{DuintjerTebbensMeurant:2019:1}. 
	The coefficient vector $\bg_m$ in
	\eqref{eq:residual-coeff} satisfies
	\begin{align}
		\dual[auto]{\bg_m}{\bg_m} 
		=
		\dual[auto]{\br_m}{\br_m}.
		\label{eq:ip-coeff-verif}
	\end{align}
	It follows from \eqref{eq:ip-coeff-verif} and \eqref{eq:first_row_form} that we can reduce 
	\begin{align*}
		\dual[auto]{\bg_m}{\bg_m}
		&=
		\norm[auto]{\br_m}^2
		\be_1^T
		\prn{
			D_m U_m
		}^{-1}
		\prn{
			D_m U_m
		}^{-\ast}
		\be_1
		\norm[auto]{\br_m}^2
		\\
		=&
		\norm[auto]{\br_m}^2
		\prn{
			\norm[auto]{\br_0}^{-2}
			+
			\sum_{i=1}^{m-1}
			\prn{
				\norm[auto]{\br_i}^{-2}
				-
				\norm[auto]{\br_{i-1}}^{-2}
			}
		}
		\norm[auto]{\br_m}^2.
	\end{align*}
	To prove $\br_m = V_{m+1}\bg_m$, it suffices to show that
	$\dual[auto]{\br_m}{V_{m+1}\bg_m} = \dual[auto]{\br_m}{\br_m}$, as this
	would imply that $\cos\prn{\br_m,V_{m+1}\bg_m}=1$. Since the minimizer
	(and therefore $\bg_m$) is unique and \eqref{eq:ip-coeff-verif} holds,
	we would conclude it must be the case that the left equation in
	\eqref{eq:residual-coeff} holds. Proving the right-hand equation in \eqref{eq:residual-coeff} 
	then would follow from \eqref{eq:first_row_form}.

	Let $\underline{H_m}$ be the Hessenberg matrix defined as
	in \eqref{eq:arnoldi-relation}. We observe that $\underline{H_m} = H_{m+1}\underline{I_m}$ where $\underline{I_m}$ is the
	$m\times m$ identity matrix with a row of zeros appended at the bottom.  From \eqref{eq:Hess_fact}, it follows that 
	\begin{align}
		\underline{H_m} = D_{m+1}U_{m+1}C_{m+1}(D_{m+1}U_{m+1})^{-1}\underline{I_m}.
		\label{eq:Hm-rect-decomp}
	\end{align}
	From \eqref{eq:gmres-ls}, we can expand the residual $\br_m$ with respect to the basis $V_{m+1}$ where the coefficients
	can be written in terms of the pseudoinverse of $\underline{H_m}$,
	\begin{align*}
		\br_m
		=&
		\bb 
		- 
		AV_m\by_m 
		\\
		=&
		V_{m+1}\prn{
			\norm[auto]{
				\br_0
			}
			\be_1
		}
		- 
		V_{m+1}
		\underline{H_m}\underline{H_m}^\dagger
		\prn{
			\norm[auto]{
				\br_0
			}
			\be_1
		}
		\\
		=&
		V_{m+1}\left(I_{m+1} - \underline{H_m}\,\underline{H_m}^\dagger\right)\be_1\norm[auto]{\br_0}.
	\end{align*}
	We can express the projector $\underline{H_m}\,\underline{H_m}^\dagger$ using \eqref{eq:Hm-rect-decomp} and collapse the
	inner terms to simplify thusly,
	\begin{align*}
		\underline{H_m}\,\underline{H_m}^\dagger
		=&
		D_{m+1}U_{m+1}C_{m+1}
		\begin{bmatrix}
			I_m & U_m^{-1}\bu_{m+1}
			\\
			\bnull^T & 0
		\end{bmatrix}
		C_{m+1}^{-1}(D_{m+1}U_{m+1})^{-1},
	\end{align*}
	whereby $\bnull\in\C^m$ is a vector of zeros. We write in compressed form 
	\begin{align*}
		U_{m+1}
		=
		\begin{bmatrix}
			U_m & \bu_{m+1}
			\\ 
			\bnull^T & 1
		\end{bmatrix}
	\end{align*}
	If we additionally express the companion matrix in compressed form, we can obtain a compressed form for its inverse, i.e.,
	\begin{align*}
		C_{m+1}
		=
		\begin{bmatrix}
			\bnull^T & c_0^{(m+1)}
			\\
			I_m & \bc_{1:m}^{(m+1)}
		\end{bmatrix}
		\implies
		C_{m+1}^{-1}
		=
		\begin{bmatrix}
			-\bc_{1:m}^{(m+1)}/c_0^{(m+1)} & I_m
			\\
			1/c_0^{(m+1)} & \bnull^T
		\end{bmatrix}
		,
	\end{align*}
	where $\bc_{1:m}^{(m+1)}\in\C^m$ is the vector containing the
	coefficients of the characteristic polynomial of for degrees $1$ to
	$m$, with the constant coefficient being $c_0^{(m+1)}$. 
	\footnote{
			We remind the reader that the characteristic polynomial
			is monic with an assumed leading coefficient of $1$
			that is not represented in the companion matrix.
	}
	Multiplying out the middle terms yields the simplification
	\begin{align*}
		C_{m+1}
		\begin{bmatrix}
			I_m & U_m^{-1}\bu_{m+1}
			\\
			\bnull^T & 0
		\end{bmatrix}
		C_{m+1}^{-1}
		=
		\begin{bmatrix}
			0 & \bnull^T
			\\
			(\bc^{(m)} - \bc_{1:m}^{(m+1)}) & I_m
		\end{bmatrix}
		,
	\end{align*}
	where $\bc^{(m)} = U_m^{-1}\bu_{m+1}$.
	From this we finally obtain
	\begin{align*}
		\underline{H_m}\,\underline{H_m}^\dagger
		=
		D_{m+1}U_{m+1}
		\begin{bmatrix}
			0 & \bnull^T
			\\
			(\bc^{(m)} - \bc_{1:m}^{(m+1)}) & I_m
		\end{bmatrix}
		U_{m+1}^{-1}D_{m+1}^{-1}.
	\end{align*}
	Inserting this into the inner product \eqref{eq:ip-coeff-verif} and simplifying
	\begin{align*}
		\dual[auto]{\br_m}{V_{m+1}\bg_m} 
		=& 
		\dual[auto]{\br_m}{\br_m}\left(\be_1^T(DU)^{-1}\be_1\norm[auto]{\br_0} -0\right) 
		\\
		=& 
		\dual[auto]{\br_m}{\br_m}\left(1 - 0\right) 
		=
		 \dual[auto]{\br_m}{\br_m}
	\end{align*}
	completes the proof.
\end{proof}
From \cref{eq:residual-coeff}, we further observe that it follows
\begin{align*}
	\begin{bmatrix}
		V_m&\dfrac{\br_m}{\norm[auto]{\br_m}}
	\end{bmatrix} 
	=
	V_{m+1}
	\begin{bmatrix}
		\begin{matrix}
			I_m
			\\
			\bnull^T
		\end{matrix}
		&
		\dfrac{
			\bg_m
		}
		{
			\norm[auto]
			{
				\bg_m
			}
		} 
	\end{bmatrix}.
\end{align*}

%%%%% NEW SUB-SECTION %%%%%
\subsection{Generating the prescribed Hessenberg matrices in restarted Arnoldi}\label{section:prescribing-restarted-gmres}
To simplify the presentation, we characterize the case in which there is no stagnation at the end of any
cycle.  This provides us theory which can be modified for the case of end-of-cycle stagnation,

%%%%% NEW SUB-SUB-SECTION %%%%%
\subsubsection{No stagnation at the end of any cycle}
We show that if there is no stagnation at the end of a cycle, we can prescribe arbitrary convergence curve for
the next cycle.
%%%%%%%%%% THEOREM %%%%%%%%%%
\begin{theorem}\label{theorem:restart-gmres-nostag-assignment}
	Let $\left\{\left\{f_j^{(k)}\right\}_{j=0}^{m-1}\right\}_{k=1}^\ell$ be a sequence of positive real numbers satisfying
	\begin{align*}
		f_0^{(1)}
		\geq
		f_1^{(1)}
		\geq
		\cdots 
		\geq
		f_{m-1}^{(1)}>f_{0}^{(2)}\geq\cdots 
		\geq
		f_{m-1}^{(\ell-1)}> f_0^{(\ell)}
		\geq
		\cdots 
		\geq
		f_{m-1}^{(\ell)}>0,
	\end{align*}
	\ie it is monotonically non-increasing with strictly decreasing transitions corresponding to restart cycles
	($f_{m-1}^{(j)}>f_{0}^{(j+1)}>0$ for all $j$). Then there exist a matrix $A$ and a right-hand side $\bb$ such that the restarted
	\gmres with cycle length $m$ applied to $(A,\bb)$ yields residuals satisfying
	\begin{align*}
		\norm[auto]{
			\br_j^{(k)}
		} 
		= 
		f_j^{(k)}, 
		\quad 
		j = 0,\ldots, m-1, 
		\quad 
		k
		=
		1,\ldots,l.
	\end{align*}
\end{theorem}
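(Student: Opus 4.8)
The plan is to realise the prescribed curve by chaining together $\ell$ copies of the non-restarted construction underlying \Cref{proposition:gmres-convergence-specification}, one per restart cycle, and to glue them into a single matrix through a restarted companion factorization $A=\widetilde K\widetilde C\widetilde K^{-1}$ extending \eqref{eqn:A-full-gmres-decomp}; the strict decreases at the cycle transitions are exactly what guarantee that $\widetilde K$ is invertible, so that $A$ is well defined.

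First I would normalise the data and set up the inter-cycle bookkeeping. Put $f_m^{(k)}:=f_0^{(k+1)}$ for $k=1,\dots,\ell-1$ and $f_m^{(\ell)}:=0$, and abbreviate $\mathbf{s}_k:=\br^{(k)}_0$, so $\mathbf{s}_1=\bb$ and $\mathbf{s}_{k+1}=\br^{(k)}_m$. Each cycle becomes an $m$-step \gmres run seeded by $\mathbf{s}_k$ with target residual norms $f_0^{(k)}\ge\cdots\ge f_{m-1}^{(k)}>f_m^{(k)}$, the runs chained through their final residuals, and the strict transitions $f_{m-1}^{(k)}>f_0^{(k+1)}$ say precisely that there is no end-of-cycle stagnation; by \Cref{proposition:krylov-matrix-rank} this is equivalent to the restarted Krylov matrix of \Cref{definition:restarted-krylov-matrix} being full rank. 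Mirroring the recipe behind \Cref{proposition:gmres-convergence-specification}, I would fix a unitary $W$ whose columns are grouped into blocks $W^{(k)}=[\,\bw_1^{(k)}\ \cdots\ \bw_m^{(k)}\,]$, $k=1,\dots,\ell$, the $k$th block destined to satisfy $\mathrm{span}\{\bw_1^{(k)},\dots,\bw_j^{(k)}\}=A\cK_j(A,\mathbf{s}_k)$, choose scalars with $|h_i^{(k)}|^2=(f_{i-1}^{(k)})^2-(f_i^{(k)})^2$ (nonnegative by monotonicity, strictly positive for $i=m$), and set $\bb:=\sum_{k,i}h_i^{(k)}\bw_i^{(k)}$ and $\mathbf{s}_{k+1}:=\mathbf{s}_k-\sum_{i=1}^m h_i^{(k)}\bw_i^{(k)}$. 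A short telescoping computation using orthonormality of the columns of $W$ gives $\norm[auto]{\mathbf{s}_k}=f_0^{(k)}$ and $\mathbf{s}_{\ell+1}=\bnull$, and, once $A$ realises the span conditions, the orthogonality $\mathbf{s}_k-\sum_{i\le j}h_i^{(k)}\bw_i^{(k)}\perp\bw_1^{(k)},\dots,\bw_j^{(k)}$ makes $\br^{(k)}_j=\mathbf{s}_k-\sum_{i\le j}h_i^{(k)}\bw_i^{(k)}$ the genuine \gmres residual of cycle $k$, so that $\norm[auto]{\br^{(k)}_j}=f_j^{(k)}$.

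The heart of the argument is the construction of $A$, which I would carry out cycle by cycle as in the derivation of \eqref{eqn:A-full-gmres-decomp} and \eqref{eq:first_row_form}: inside cycle $k$ pick a diagonal scaling $D^{(k)}$ whose first row is tied to $f_0^{(k)},\dots,f_m^{(k)}$ through \eqref{eq:first_row_form}, a free unit upper-triangular factor $U^{(k)}$ carrying the (unconstrained) Ritz values of that cycle as in \eqref{eq:Hess_fact} and \eqref{eq:residual-coeff}, and a free local companion block $C^{(k)}$ with nonzero constant term. These assemble into the restarted Krylov matrix $\widetilde K$, whose $k$th block is $[\,\mathbf{s}_k\ A\mathbf{s}_k\ \cdots\ A^{m-1}\mathbf{s}_k\,]$, and into a restarted companion matrix $\widetilde C$ with $A\widetilde K=\widetilde K\widetilde C$, whose structure is: shift columns inside each block, one column per cycle $k<\ell$ carrying the normalised coefficients of that cycle's minimising polynomial together with a single coupling entry into block $k+1$ (the algebraic encoding of $\br^{(k)}_m=\mathbf{s}_{k+1}$), and a final, genuinely free, global companion column. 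Setting $A:=\widetilde K\widetilde C\widetilde K^{-1}$ (well defined since $\widetilde K$ is full rank, by \Cref{proposition:krylov-matrix-rank} together with the strict-decrease hypothesis), the choice of that last column assigns $A$ any prescribed characteristic polynomial, so the eigenvalues of $A$ may be specified freely as well. One then verifies, by induction on $k$, that restarted \gmres on $(A,\bb)$ reproduces the construction: $\cK_m(A,\mathbf{s}_k)$ has dimension $m$ (so cycle $k$ runs its $m$ steps, since $f_{m-1}^{(k)}>0$), $A\cK_j(A,\mathbf{s}_k)=\mathrm{span}\{\bw_1^{(k)},\dots,\bw_j^{(k)}\}$ with residual norms $f_j^{(k)}$, and $\br^{(k)}_m$ equals the seed $\mathbf{s}_{k+1}$ of the next cycle.

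The main obstacle is precisely this last consistency check, that is, the coupling between consecutive cycles: in the non-restarted case one simply sets $A=VHV^\ast$ and the Krylov and residual structure comes for free, whereas here $\br^{(k)}_m$ must at once be orthogonal to $A\cK_m(A,\mathbf{s}_k)=\mathrm{span}(W^{(k)})$ (its role at the end of cycle $k$) and be the vector $\mathbf{s}_{k+1}$ whose Krylov space $\cK_m(A,\mathbf{s}_{k+1})$ maps under $A$ onto $\mathrm{span}(W^{(k+1)})$ (its role as the next seed). The work is to show that the off-block corrections in $\widetilde K$ and $\widetilde C$ are exactly the shadows of the identities $\br^{(k)}_m=\mathbf{s}_{k+1}$ and $\br^{(k)}_m\perp A\cK_m(A,\mathbf{s}_k)$, that $\widetilde K$ does stay invertible, and that the within-cycle norm relations \eqref{eq:first_row_form} still hold when cycle $k$ is seeded by $\br^{(k-1)}_m$ rather than by a fresh right-hand side. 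I would carry this out inside the cycle-by-cycle induction above, at each step applying the construction behind \Cref{proposition:gmres-convergence-specification} ``relative to'' the current seed $\mathbf{s}_k$ and then checking the hand-off to cycle $k+1$.
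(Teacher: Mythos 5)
Your overall plan---realise each cycle by the single-cycle construction and glue the cycles through a factorization built on the restarted Krylov matrix $\widetilde{K}$---is a legitimate route (closer in spirit to the original strategy of Duintjer Tebbens and Meurant than to the proof given here), and your $W$-basis bookkeeping with $|h_i^{(k)}|^2=(f_{i-1}^{(k)})^2-(f_i^{(k)})^2$ does telescope to the prescribed norms once the span conditions $A\cK_j(A,\br_0^{(k)})=\mathrm{span}\{\bw_1^{(k)},\dots,\bw_j^{(k)}\}$ are in force. The difficulty is that the proposal stops exactly where the proof has to start: the span conditions, the hand-off ``$\br_m^{(k)}$ equals the seed of cycle $k+1$,'' and the invertibility of $\widetilde{K}$ are all listed as work to be carried out inside an induction that is never performed, and these are precisely the content of the theorem. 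Two concrete problems. First, the appeal to \Cref{proposition:krylov-matrix-rank} to get $\widetilde{K}$ invertible is circular: that proposition concerns the restarted Krylov matrix generated by an already existing pair $(A,\bb)$, whereas in your construction $\widetilde{K}$ must be written down before $A:=\widetilde{K}\widetilde{C}\widetilde{K}^{-1}$ exists. You would have to define the candidate columns explicitly (in terms of $W$ and the per-cycle factors $D^{(k)},U^{(k)}$) and prove their linear independence directly from the hypothesis $f_{m-1}^{(k)}>f_0^{(k+1)}$; this is exactly where the coupling quantity enters, since one must show the new seed has a nonzero component outside the previous cycle's Krylov space, a statement about the first row of $(D^{(k)}U^{(k)})^{-1}$ via \eqref{eq:first_row_form}, not something that can be cited. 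Second, the structure of $\widetilde{C}$---in particular the single coupling entry from block $k$ into block $k+1$, and your claim that the final column lets you assign the characteristic polynomial of $A$---is asserted rather than derived; $\widetilde{C}$ is not a companion matrix, so its spectrum is not read off from one column, and in the construction actually used the eigenvalues arise as the cumulative spectra of the per-cycle diagonal blocks (\Cref{corollary:eigs_of_A}), not as a freely chosen global characteristic polynomial.

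For contrast, the paper avoids the restarted companion matrix altogether: it builds, for each cycle, an $(m+1)\times(m+1)$ Hessenberg matrix $H^{(k)}=D^{(k)}U^{(k)}C^{(k)}(D^{(k)}U^{(k)})^{-1}$ realising that cycle's curve by the known single-cycle results, writes the restarted Arnoldi vectors explicitly as (scaled) standard basis vectors with the first vector of cycle $k+1$ equal to $\bg^{(k)}/\norm[auto]{\bg^{(k)}}$ using \eqref{eq:residual-coeff}, obtains full rank of $\widetilde{V}$ from $\be_{m+1}^T\bg^{(k)}\neq 0$ (the quantitative form of no end-of-cycle stagnation), and then defines $A$ by its action on these $n$ linearly independent vectors, so the hand-off between cycles is an identity rather than a consistency check to be verified afterwards. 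If you wish to keep your route, the missing step is an explicit assembly of $\widetilde{K}$ and $\widetilde{C}$ from the per-cycle data together with a direct proof of invertibility---in effect re-deriving the $\bg^{(k)}$ coupling---at which point your argument and the paper's become essentially the same proof in different coordinates.
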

%%%%%%%%%% PROOF %%%%%%%%%%
\begin{proof}
	Without loss of generality, we construct an appropriate Hessenberg matrix and starting vector parallel to the first
	standard basis vector $\be_1$ that exhibits the described restarted \gmres behavior.  At the end of the proof, we discuss
	how the construction may be modified to build non-Hessenberg matrices with arbitrary Arnoldi vectors at each cycle.

	Since we have enforced that for each $j$ it holds $f_{0}^{(j+1)}$ satisfies $f_{m-1}^{(j)}>f_{0}^{(j+1)}>0$, we may
	for each cycle define
	\begin{align*}
		H^{(k)} 
		= 
		D^{(k)}U^{(k)}C^{(k)}\left(D^{(k)}U^{(k)}\right)^{-1}\in\C^{(m+1)\times (m+1)}, \quad k
		=
		1,\ldots,l
	\end{align*}
	to be any upper Hessenberg matrices such that \gmres applied to $(H^{(k)},\be_1f^{(k)}_0)$ yields the convergence curve
	$f_0^{(k)}\geq\cdots\geq f_{m-1}^{(k)}>f_{0}^{(k+1)}$. Denote by $\underline{H^{(k)}} =
	H^{(k)}\underline{I_m}\in\C^{(m+1)\times m}$ to be this matrix but with the last column omitted. The means to construct
	such matrices has been well established in, \eg, \cite{ArioliPtakStrakos:1998:1,TebbensMeurant:2012:1}. 
	
	In the context of this construction, we conclude immediately that the Arnoldi vectors at each cycle are subsets of the
	(sometimes scaled) standard basis vectors, \ie
	\begin{align*}%\label{eq:can_arnoldi_vec}
		V^{(1)} 
		:=& 
		\begin{bmatrix}
			\be_{1}
			&
			\cdots
			&
			\be_{m}
		\end{bmatrix}
		\\
		V^{(k+1)} 
		:=& 
		\begin{bmatrix}
			V^{(k)}
			&
			\be_{km+1}
			&
			\cdots
			&
			\be_{(k+1)m}
		\end{bmatrix}
		\begin{bmatrix}
			\frac{\bg^{(k)}}{\norm[auto]{\bg^{(k)}}}
			&
			0
			\\
			&
			I_{m-1}
		\end{bmatrix}, 
		\quad 
		k 
		= 
		0,\ldots,l-1,
	\end{align*}
	where $\bg^{(k)} = \brak{\be_1^T(D^{(k)}U^{(k)})^{-1}}^*\dual[auto]{f_{0}^{(k+1)}}{f_{0}^{(k+1)}}$, as in
	\eqref{eq:residual-coeff}. We observe that matrices $V^{(k)}$ have orthonormal columns for $k=1,\ldots,\ell$. 	
	Furthermore, since $\be_{m+1}^T\bg^{(k)}$ is non-zero for all $k$, the matrix
	\begin{align*}
		\widetilde{V}
		=
		\begin{bmatrix}
			V^{(1)}&V^{(2)}&\cdots&V^{(\ell)}
		\end{bmatrix}
	\end{align*}
	has full rank.
	
	We construct the right-hand side $\bb$ as
	\begin{align*}
		\bb 
		=
		\be_1f^{(1)}_0.
	\end{align*} 
	The matrix $A$ can be constructed via its action on $n$ linearly independent vectors as
	\begin{align}
		AV^{(k)} 
		=& 
		\begin{bmatrix}
			V^{(k)}&\be_{km+1}
		\end{bmatrix}
		\underline{H^{(k)}}
		\quad \mbox{for} \quad 
		k=1,\ldots,l-1
		\nonumber
		\\
		AV^{(\ell)} 
		=& 
		\begin{bmatrix}
			V^{(\ell)}
			&
			\bnull
		\end{bmatrix}
		\underline{H^{(\ell)}}
		=
		V^{(\ell)}
		\begin{bmatrix}
			I_m
			&
			\bnull
		\end{bmatrix}
		\underline{H^{(\ell)}}
		=
		V^{(\ell)}H_{m\times m}^{(\ell)}
		\label{eq:A_def}
	\end{align}
	where $H^{(\ell)}_{m\times m}\in\C^{m\times m}$ contains the first $m$ rows of $\underline{H^{(\ell)}}$.
	From the construction of $H^{(k)}$, $A$ applied to $V^{(k)}\be_1f^{(k)}_0$ yields the convergence curve specified by the
	sequence $\braces{f_j^{(k)}}_{j=1}^m$. Using \eqref{eq:residual-coeff}, we see that 
	\begin{align*}
		V^{(k+1)}\be_1 
		=
		\begin{bmatrix}
			V^{(k)}&\be_{km+1}
		\end{bmatrix}
		\bg^{(k)}\norm[auto]{\bg^{(k)}}^{-1} =\br_m^{(k)}\norm[auto]{\br_m^{(k)}}^{-1} ,
	\end{align*}
	which ties together the individual cycles.
\end{proof}
We note that we can rewrite \eqref{eq:A_def} as
\begin{align}%\label{eq:modified_arnoldi}
	AV^{(k)} 
	&= 
	\begin{bmatrix}
		V^{(k)}&\be_{km+1}
	\end{bmatrix}
	\underline{H^{(k)}} &
	=
	\begin{bmatrix}
		V^{(k)}&V^{(k+1)}\be_1
	\end{bmatrix}
	\underbrace{
		\begin{bmatrix}
			\begin{matrix}I_m\\0\end{matrix}&\bg^{(k)}\norm[auto]{\bg^{(k)}}^{-1}
		\end{bmatrix}^{-1}
		\underline{H^{(k)}}
	}_{=:\underline{\widetilde{H}^{(k)}}},
\end{align}
allowing us to define $A$ fully by its action on all restarted Arnoldi vectors
\footnotesize
\begin{align}
	A
	\widetilde{V} 
	= 
	\widetilde{V}
	\underbrace{
		\begin{bmatrix}
			\underline{\widetilde{H}^{(1)}}
			\\
			&
			\underline{\widetilde{H}^{(2)}}
			\\
			&
			&
			\ddots
			\\
			&
			&
			&
			\underline{\widetilde{H}^{(\ell-1)}}
			\\
			&
			&
			&
			&
			H^{(\ell)}_{m\times m}
		\end{bmatrix}
	}_
	{
		=: 
		\widetilde{H}
	}
	.
	\label{eq:big_A}
\end{align}
\normalsize
From this construction, we see that the eigenvalues of $A$ are determined by the Hessenberg blocks in \eqref{eq:big_A}.
\begin{corollary}
	The eigenvalues of $A$ can be recovered as the cumulative eigenvalues of the upper-triangular blocks of 
	\begin{align}
		\begin{bmatrix}
			I_{m}
			&
			0
		\end{bmatrix}
		\underline{\widetilde{H}^{(k)}}
		&
		\quad \mbox{for} \quad  
		k=1,\ldots,\ell-1;
		\nonumber
		\\
		H^{(\ell)}_{m\times m}
		&
		\quad \mbox{for} \quad  
		k=\ell.
		\label{eq:Hessenberg-eig-blocks}
	\end{align}
	\label{corollary:eigs_of_A}
\end{corollary}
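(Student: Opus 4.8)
The plan is to read the spectrum of $A$ off the conjugation \eqref{eq:big_A}. First I would recall that in the proof of \Cref{theorem:restart-gmres-nostag-assignment} the square matrix $\widetilde{V}$ of restarted Arnoldi vectors was shown to have full rank, hence to be invertible, so \eqref{eq:big_A} rearranges to $A=\widetilde{V}\,\widetilde{H}\,\widetilde{V}^{-1}$. Then $A$ and $\widetilde{H}$ have the same characteristic polynomial, and it suffices to factor $\det(\lambda I_n-\widetilde{H})$.

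The substance of the argument is to pin down the exact block sparsity pattern of $\widetilde{H}$. The subtlety is that each block $\underline{\widetilde{H}^{(k)}}$ appearing in \eqref{eq:big_A} is $(m+1)\times m$ rather than square, so the ``block diagonal'' layout drawn there must be read with a one-row overlap between consecutive blocks: the bottom row of $\underline{\widetilde{H}^{(k)}}$ and the top row of $\underline{\widetilde{H}^{(k+1)}}$ share a single global row, which is exactly what makes $\widetilde{H}$ square. The key observation I would establish is that this bottom row is supported only in its last column. Indeed, $\underline{H^{(k)}}$ is upper Hessenberg, so its bottom row is a scalar multiple of $\be_m^T$; the $(m+1)\times(m+1)$ matrix that relates $\underline{\widetilde{H}^{(k)}}$ and $\underline{H^{(k)}}$ in the definition preceding \eqref{eq:big_A} is block upper triangular with leading block $I_m$, as is its inverse, whose bottom row is therefore a scalar multiple of $\be_{m+1}^T$; hence the bottom row of $\underline{\widetilde{H}^{(k)}}$ is again a scalar multiple of $\be_m^T$.

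With this in hand I would partition the rows and columns of $\widetilde{H}$ into the $\ell$ consecutive index groups $G_k=\{(k-1)m+1,\dots,km\}$. Tracing the construction of $A$ (using $A\widetilde{V}=\widetilde{V}\widetilde{H}$ and invertibility of $\widetilde{V}$), the columns of $\widetilde{H}$ indexed by $G_k$ contain only the block $\underline{\widetilde{H}^{(k)}}$, placed in rows $(k-1)m+1,\dots,km+1$, for $k<\ell$, and only $H^{(\ell)}_{m\times m}$, placed in rows $G_\ell$, for $k=\ell$. Reading off the resulting $(G_i,G_k)$ sub-blocks: the diagonal sub-block in position $(G_k,G_k)$ is the top $m$ rows of $\underline{\widetilde{H}^{(k)}}$, i.e.\ $\begin{bmatrix}I_m & 0\end{bmatrix}\underline{\widetilde{H}^{(k)}}$ for $k<\ell$ and $H^{(\ell)}_{m\times m}$ for $k=\ell$; every sub-block strictly above the diagonal vanishes; and the only nonzero sub-blocks below it are the first-subdiagonal ones $(G_{k+1},G_k)$, each supported (by the previous paragraph) in its single top-right entry. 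Hence $\widetilde{H}$ is block triangular (in fact block lower bidiagonal) with respect to this partition, so
\begin{equation*}
	\det(\lambda I_n-\widetilde{H})
	=
	\prod_{k=1}^{\ell-1}\det\!\left(\lambda I_m-\begin{bmatrix}I_m & 0\end{bmatrix}\underline{\widetilde{H}^{(k)}}\right)\cdot\det\!\left(\lambda I_m-H^{(\ell)}_{m\times m}\right),
\end{equation*}
where the diagonal blocks are precisely those listed in \eqref{eq:Hessenberg-eig-blocks}. Combined with $\det(\lambda I_n-A)=\det(\lambda I_n-\widetilde{H})$ this gives the assertion, multiplicities included.

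I expect the main obstacle to be not any computation but the bookkeeping in the middle two paragraphs: one must check carefully that the rectangular $(m+1)\times m$ shape of the blocks forces exactly one shared row between consecutive blocks, and that this shared row feeds only into the first-subdiagonal block position, so that the diagonal sub-blocks come out to be exactly the matrices in \eqref{eq:Hessenberg-eig-blocks} and the blocks above the diagonal are zero. Once that is settled, the block-triangular determinant factorization is routine.
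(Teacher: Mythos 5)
Your proposal is correct and takes essentially the same route as the paper: the paper's proof is the one-line observation that $\widetilde{H}$ in \eqref{eq:big_A} is block lower triangular, so the eigenvalues of $A$ are those of its diagonal blocks, which are exactly the matrices in \eqref{eq:Hessenberg-eig-blocks}. Your middle paragraphs merely supply the bookkeeping the paper leaves implicit (the one-row overlap of the rectangular blocks and the fact that the bottom row of each $\underline{\widetilde{H}^{(k)}}$ is a multiple of $\be_m^T$, so the coupling lands only in the first subdiagonal block), which is a correct and welcome verification of the claimed structure.
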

\begin{proof}
	One observes that the Hessenberg matrix on the right-hand side of \eqref{eq:big_A} is block lower triangular; \ie, the
	eigenvalues of $A$ are the eigenvalues of their diagonal blocks, which are precisely the blocks defined in \eqref{eq:Hessenberg-eig-blocks}. 
\end{proof}
The Ritz values for each cycle can be similarly determined.
\begin{corollary}
	If a cycle of restarted \gmres is specified to have no stagnating
	iterations, the Ritz values of the individual Hessenberg matrices
	$H^{(k)}$ can be arbitrarily assigned to be any non-zero values.
	\label{corollary:ritzvals_of_A}
\end{corollary}
\begin{proof}
From the construction in the proof of
\Cref{theorem:restart-gmres-nostag-assignment}, one sees that the Ritz values
for each cycle are determined by the strictly upper triangular entries of
$\prn{U^{(k)}}^{-1}$.  The condition of them being non-zero when there is no
stagnation specified follows directly from
\Cref{theorem:restart-gmres-nostag-assignment}.
\end{proof}

\Cref{theorem:restart-gmres-nostag-assignment} and the proof thereof consider
the special case in which we use Hessenberg matrices and their actions on
standard basis vectors to construct a matrix and right-hand side producing the
specified restarted \gmres residual convergence behavior with the specified
eigenvalues as well as Ritz values for each cycle.  This is the strategy used
by other authors (\eg, \cite{GreenbaumPtakStrakos:1996:1}), as it has been observed
that \gmres behavior is unitarily invariant.  This holds in our setting for
each cycle of restarted \gmres for the constructed matrix/right-hand-side pair.
Thus we can modify the construction to generate any $\ell$ collections of
orthonormal restarted Arnoldi vectors, via a basis transformation. 

\begin{corollary} The results from \Cref{theorem:restart-gmres-nostag-assignment} are valid for any basis of mutually independent sets of vectors, whereby for
	each $k$, $V^{(k)}$ has orthonormal columns, and their first columns satisfy 
\begin{align*} 
	V^{(k+1)}\be_1 
	=
	\begin{bmatrix}
		V^{(k)}&\tilde{v}_k
	\end{bmatrix}
	\bg^{(k)}\norm[auto]{\bg^{(k)}}^{-1},
\end{align*}
where $\tilde{v}_k$ is any normalized vector orthogonal to $V^{(k)}$.
\label{rem:gen_arnoldi_vec}
\end{corollary}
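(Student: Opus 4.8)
The plan is to re-run the construction in the proof of \Cref{theorem:restart-gmres-nostag-assignment}, keeping every quantity that does not involve the Arnoldi vectors — the Hessenberg blocks $H^{(k)}$ and $\underline{H^{(k)}}$, the coefficient vectors $\bg^{(k)}$, the prescribed convergence data $f_j^{(k)}$, and therefore the eigenvalues (\Cref{corollary:eigs_of_A}) and all Ritz values (\Cref{corollary:ritzvals_of_A}) — while replacing the scaled standard basis vectors used there by the prescribed orthonormal blocks $V^{(1)},\dots,V^{(\ell)}$ and each new direction $\be_{km+1}$ by the vector $\tilde{v}_k$ from the statement. First I would define $A$ through its action on the columns of $\widetilde{V} := \begin{bmatrix} V^{(1)} & \cdots & V^{(\ell)}\end{bmatrix}$ in exact analogy with \eqref{eq:A_def},
\begin{align*}
	A V^{(k)} = \begin{bmatrix} V^{(k)} & \tilde{v}_k \end{bmatrix}\underline{H^{(k)}}, \quad k=1,\dots,\ell-1, \qquad A V^{(\ell)} = V^{(\ell)}H^{(\ell)}_{m\times m},
\end{align*}
and take $\bb := V^{(1)}\be_1 f_0^{(1)}$. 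Because the columns of $\widetilde{V}$ form, by hypothesis, a basis of $\C^n$, these relations specify $A$ on $n$ linearly independent vectors and hence determine it uniquely as a map on $\C^n$ (the vectors $\tilde{v}_k$ appearing on the right are fixed elements of $\C^n$ and need not be distinguished).

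The heart of the argument is an induction on the cycle index $k$, with inductive hypothesis that cycle $k$ of restarted \gmres applied to $(A,\bb)$ begins with $\br_0^{(k)} = V^{(k)}\be_1 f_0^{(k)}$; the base case is immediate from $\bb = V^{(1)}\be_1 f_0^{(1)}$ and $\bx_0 = \bnull$. Given this, the key observation is that $\begin{bmatrix} V^{(k)} & \tilde{v}_k\end{bmatrix}$ has orthonormal columns (since $V^{(k)}$ does and $\tilde{v}_k$ is a unit vector orthogonal to it), so $A V^{(k)} = \begin{bmatrix} V^{(k)} & \tilde{v}_k\end{bmatrix}\underline{H^{(k)}}$ is precisely an Arnoldi relation \eqref{eq:arnoldi-relation} for $A$ with starting vector $V^{(k)}\be_1$; as $\underline{H^{(k)}}$ is unreduced upper Hessenberg, this is the decomposition \gmres generates in cycle $k$, carrying exactly the Hessenberg data used in \Cref{theorem:restart-gmres-nostag-assignment}. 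The \gmres residual norms in that cycle depend only on $\underline{H^{(k)}}$ and on $\norm[auto]{\br_0^{(k)}} = f_0^{(k)}$, hence equal $f_0^{(k)}\geq\cdots\geq f_{m-1}^{(k)}$, as required. Applying the residual expansion \eqref{eq:residual-coeff} in this cycle gives $\br_m^{(k)} = \begin{bmatrix} V^{(k)} & \tilde{v}_k\end{bmatrix}\bg^{(k)}$ with the same $\bg^{(k)}$ as before, so the assumed first-column relation, together with $\norm[auto]{\bg^{(k)}} = \norm[auto]{\br_m^{(k)}} = f_0^{(k+1)}$, yields $\br_0^{(k+1)} = \br_m^{(k)} = V^{(k+1)}\be_1 f_0^{(k+1)}$, which closes the induction. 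The last cycle uses $A V^{(\ell)} = V^{(\ell)}H^{(\ell)}_{m\times m}$ verbatim as in the theorem, producing $f_0^{(\ell)}\geq\cdots\geq f_{m-1}^{(\ell)}$.

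I expect the main obstacle to be the bookkeeping inside the inductive step — confirming that, with the prescribed non-standard orthonormal vectors in place, the Arnoldi process for $A$ started from $V^{(k)}\be_1$ genuinely returns the orthonormal basis $\begin{bmatrix} V^{(k)} & \tilde{v}_k\end{bmatrix}$ and the unreduced matrix $\underline{H^{(k)}}$, with no early breakdown and with $\tilde{v}_k$ correctly singled out as the unique new Arnoldi direction of cycle $k$. Everything needed for this is, however, already in hand: $\underline{H^{(k)}}$ unreduced and $\be_{m+1}^T\bg^{(k)}\neq 0$ from the proof of \Cref{theorem:restart-gmres-nostag-assignment}, and the orthonormality and mutual independence of the $V^{(k)}$ from the hypotheses, so no genuinely new estimate is required. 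I would also note explicitly why a single global unitary change of basis cannot be substituted for this per-cycle argument: the blocks $V^{(k)}$ from different cycles need not be mutually orthogonal, so $\widetilde{V}$ is generally not unitary and the matrix built here is not a unitary conjugate of the one in \Cref{theorem:restart-gmres-nostag-assignment}; only the cycle-wise isometry used above is available.
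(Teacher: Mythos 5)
Your argument is correct and is essentially the paper's intended justification written out in full: the paper states this corollary without proof, relying on the preceding remark that \gmres behavior is unitarily invariant \emph{per cycle}, and your proposal simply re-runs the construction of \Cref{theorem:restart-gmres-nostag-assignment} with the prescribed orthonormal blocks in place of the standard basis vectors and verifies the cycles link up by induction. Your closing observation — that the blocks $V^{(k)}$ from different cycles need not be mutually orthogonal, so $\widetilde{V}$ is generally not unitary and only the cycle-wise isometry argument is available — is accurate and correctly identifies why the paper's phrase ``via a basis transformation'' must be read per cycle rather than as a single global unitary conjugation.
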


\begin{remark}
	We observe that this construction allows one to assign eigenvalues and
	Ritz values for each Arnoldi cycle while simultaneously specifying an
	admissible restarted \gmres residual convergence behavior.  By the its
	very nature, this procedure constructs a matrix and right-hand side; so
	it is restricted to assigning residual norms for the first $n$
	residuals, as observed in \cite{DuintjerTebbensMeurant:2019:1}.  We
	note that this construction does not allow for the specification of the
	last residual norm of the last ($\ell$-th) cycle, \ie
	$\norm[auto]{\br_m^{(\ell)}}$.  We offer some discussion on how one
	might adjust the construction to influence this final residual and,
	through this, note that the construction as presented contains an
	implicit choice, which has been made to allow for unambiguous
	assignment of the eigenvalues and Ritz values.
\end{remark}

By defining $A$ according to its action on the restarted Krylov basis vectors
(as in \eqref{eq:A_def}), there is an implicit choice one must make to close
the definition.  Namely, we must choose how $A$ acts on the last Arnoldi vector
in the last restarted \gmres cycle.  In \cite{DuintjerTebbensMeurant:2019:1},
the authors make the choice shown in the second line of \eqref{eq:A_def}.  We
demonstrate that this choice enables the unambiguous assignment of eigenvalues
$A$ and Ritz values for each restart cycle; \ie, how $A$ acts on the last
column of $V^{(\ell)}$ must be constrained appropriately, for the assignment to
be possible.  Without this constraint, we can influence
$\norm[auto]{\br_m^{(\ell)}}$ at the expense of being able to precisely specify
the eigenvalues and Ritz values.  We elaborate thusly.

Consider that unlike full \gmres, restarted \gmres is not guaranteed to converge at the final iteration.  Therefore, a more
general Arnoldi relation in the final cycle of restarted \gmres would be
\begin{align}
	AV^{(\ell)}
	=
	\begin{bmatrix}
		V^{(\ell)}
		&
		\bu
	\end{bmatrix}
	\underline{H^{(\ell)}}.
	\label{eq:final-arnoldi-next-vector}
\end{align}
For all cycles but the last, we adjust the decomposition to accommodate a
restart using the last residual from the previous cycle as the first Arnoldi
vector of the new cycle. However, at the last iteration of the last cycle, we
have already generated a full, linearly independent basis for $\C^n$, assuming
no end-of-cycle stagnation.  Thus, the vector $\bu$ can be completely
represented in the restarted Krylov basis as
\begin{align}
	\bu
	=
	\widetilde{V}
	\widehat{\bc}
	\label{eq:next-vector-linear-combo}
\end{align}
where $\widehat{\bc}\in\C^n$ is an arbitrary vector.  

We demonstrate that $\widehat{\bc}$ can be chosen freely such that it
indirectly influences the value of $\norm{\br_m^{(\ell)}}$ at the expense of no
longer being able to specify eigenvalues and Ritz values.  We note that in
\eqref{eq:A_def}, the choice is taken to be $\bu = \bnull$, and this choice
allows for them to be specified unambiguously. We can \Wlog assume the case of
no end-of-cycle stagnation. 
\begin{lemma}\label{lemma:rank-one-update-Hessenberg-laststep}
	Let $\bu = \widetilde{V}\widehat{\bc} \neq \bnull$.  This corresponds to a rank-one update of the relation \eqref{eq:A_def}, namely
	\begin{align*}
		A\widetilde{V}
		=
		\widetilde{V}
		\prn{
			\underbrace{
				\widetilde{H}
				+
				\bc\be_n^T
			}_
			{
				=:
				\widetilde{H}(\bc)
			}
		}
	\end{align*}
	where $\bc = h_{m+1,m}^{(\ell)}\widehat{\bc}$, and $h_{m+1,m}^{(\ell)}$ is the $(m+1,m)$th entry of
	$\underline{H^{(\ell)}}$.
\end{lemma}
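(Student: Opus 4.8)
The plan is to keep everything in the construction of \Cref{theorem:restart-gmres-nostag-assignment} fixed except the action of $A$ on the last cycle's Arnoldi vectors, and to check that changing $\bu = \bnull$ to a general $\bu \neq \bnull$ perturbs only the last column of the block lower-triangular matrix $\widetilde{H}$ from \eqref{eq:big_A}. For the cycles $k=1,\ldots,\ell-1$, the relations $AV^{(k)} = \bigl[V^{(k)}\ V^{(k+1)}\be_1\bigr]\underline{\widetilde{H}^{(k)}}$ never involve $\bu$, so the first $n-m$ columns of the factorization $A\widetilde{V}=\widetilde{V}(\cdot)$ remain exactly the first $n-m$ columns of $\widetilde{H}$. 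Hence the only possible change lives in the last $m$ columns, which come from $AV^{(\ell)}$, and the first task is to read off that contribution.

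First I would expand $AV^{(\ell)} = \bigl[V^{(\ell)}\ \bu\bigr]\,\underline{H^{(\ell)}}$ using the upper-Hessenberg structure of $\underline{H^{(\ell)}}\in\C^{(m+1)\times m}$: its first $m$ rows form $H^{(\ell)}_{m\times m}$ and its last row is $h_{m+1,m}^{(\ell)}\be_m^T$, so
\[
	AV^{(\ell)} = V^{(\ell)}H^{(\ell)}_{m\times m} + h_{m+1,m}^{(\ell)}\,\bu\,\be_m^T .
\]
Writing $\bu = \widetilde{V}\widehat{\bc}$ as in \eqref{eq:next-vector-linear-combo} and setting $\bc = h_{m+1,m}^{(\ell)}\widehat{\bc}$ turns the second term into $\widetilde{V}\bc\,\be_m^T$. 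The first term is precisely $\widetilde{V}$ times the bottom-right block $H^{(\ell)}_{m\times m}$ of $\widetilde{H}$, exactly as in \eqref{eq:A_def}. Since $V^{(\ell)}$ occupies the last $m$ columns of $\widetilde{V}$ (so that its $m$-th column is the $n$-th column of $\widetilde{V}$), assembling $A\widetilde{V}$ column-block by column-block and re-indexing the local $\be_m$ to the global $\be_n$ yields
\[
	A\widetilde{V} = \widetilde{V}\bigl(\widetilde{H} + \bc\,\be_n^T\bigr),
\]
which is the asserted rank-one update; the matrix $A$ is still well defined by this relation because $\widetilde{V}$ is invertible, by the full-rank argument in the proof of \Cref{theorem:restart-gmres-nostag-assignment} under the standing no-end-of-cycle-stagnation assumption (cf.\ \Cref{proposition:krylov-matrix-rank}).

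The argument is essentially bookkeeping, and the only genuine subtlety — hence the step I expect to need the most care — is making sure the perturbation collapses to a single rank-one term. A careless expansion might suggest that several columns of $\widetilde{H}$ are altered; it is exactly the Hessenberg structure of $\underline{H^{(\ell)}}$, namely that its last row has the single nonzero entry $h_{m+1,m}^{(\ell)}$ in position $(m+1,m)$, that confines the change to the term $\bc\be_n^T$, and the accompanying shift from the within-cycle index $m$ to the global index $n=\ell m$ must be tracked cleanly.
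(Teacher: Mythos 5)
Your argument is correct and follows essentially the same route as the paper: you expand $AV^{(\ell)}=\bigl[V^{(\ell)}\ \bu\bigr]\underline{H^{(\ell)}}$, use the fact that the last row of $\underline{H^{(\ell)}}$ is $h_{m+1,m}^{(\ell)}\be_m^T$ together with $\bu=\widetilde{V}\widehat{\bc}$ to isolate the perturbation as a single rank-one term, and reindex the local $\be_m$ to the global $\be_n$ — exactly the computation in the paper, merely written as a row split of $\underline{H^{(\ell)}}$ rather than the paper's block-matrix factorization through $\widehat{\bc}\be_{m+1}^T$.
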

\begin{proof}
	Each cycle of restarted Arnoldi can be determined independently of the
	others.  This holds also for the final cycle, with the restriction that
	the $(m+1)$st Arnoldi vector of the final cycle is representable as a
	linear combination of the $n=m\ell$ linearly independent restarted
	Arnoldi vectors from the previous $\ell$ cycles.  Defining this vector
	$\bu$ as in \eqref{eq:next-vector-linear-combo}, we insert this
	representation into \eqref{eq:final-arnoldi-next-vector},
	\begin{align*}
		AV^{(\ell)}
		=&
		\begin{bmatrix}
			V^{(\ell)}
			&
			\bu
		\end{bmatrix}
		\underline{H^{(\ell)}}
		\\
		=&
		\widetilde{V}
		\begin{bmatrix}
			\begin{matrix}
				\bnull_{(n-m)\times m}
				\\
				I_m
			\end{matrix}
			&
			\widehat{\bc}
		\end{bmatrix}
		\underline{H^{(\ell)}}
		\\
		=&
		\widetilde{V}
		\prn{
			\begin{bmatrix}
				\bnull_{(n-m)\times m}
				&
				\bnull
				\\
				I_m
				&
				\bnull
			\end{bmatrix}
			+
			\widehat{\bc}\be_{m+1}^T
		}
		\underline{H^{(\ell)}}
		\\
		=&
		\widetilde{V}
		\prn{
			\begin{bmatrix}
				\bnull_{(n-m)\times m}
				\\
				H_{m\times m}^{(\ell)}
			\end{bmatrix}
			+
			\underbrace{
				h_{m+1,m}^{(\ell)}\widehat{\bc}
			}_
			{
				=:\bc
			}
			\be_m^T
		}
		.
	\end{align*}
	This allows for the modification of \eqref{eq:A_def} to adjust the action of $A$ to reflect this more general
	specification of the last Arnoldi step of the final cycle,
	\begin{align*}
		A\widetilde{V}
		=
		\widetilde{V}
		\prn{
			\widetilde{H}
			+
			\bc\be_n^T
		},
	\end{align*}
	completing the proof.
\end{proof}
This allows us to characterize the form of the final residual, which now depends on the choice of $\bc$.
\begin{corollary}\label{corollary:final-residual-expression}
	Let $\bx_m^{(\ell)}=\widetilde{V}\by$, $\by\in\C^n$, be the final restarted \gmres approximation for the $\ell$th cycle.  Then the residual
	$\br_m^{(\ell)} = \bb - A\bx_m^{(\ell)}$ admits the expression
	\begin{align}
		\br_m^{(\ell)}
		=
		\widetilde{V}
		\prn{
			\norm{\bb}\be_1
			-
			\widetilde{H}\by
			-
			y_n\bc
		}
		,
		\label{eqn:residual-shift-structure}
	\end{align}
	where $y_n$ is the $n$th component of $\by$.
\end{corollary}
\begin{proof}
	We compute the residual via the representation of $\bx_m^{(\ell)}$ in the restarted Arnoldi basis,
	\begin{align*}
		\br_m^{(\ell)}
		=&
		\widetilde{V}\be_1
		-
		A\widetilde{V}\by
		\\
		=&
		\widetilde{V}
		\prn{
			\be_1
			-
			\widetilde{H}(\bc)\by
		}
		\\
		=&
		\widetilde{V}
		\prn{
			\be_1
			-
			\widetilde{H}\by
			-
			\bc\be_n^T\by,
		}
	\end{align*}
	which when multiplied out completes the proof.
\end{proof}
\begin{remark}[Influencing rather than specifying $\norm{\br_m^{(\ell)}}$]
	We observe that from \eqref{eqn:residual-shift-structure} it follows
	that tuning $\bc$ will influence $\norm{\br_m^{(\ell)}}$, but this
	influence is not direct.  Since the restarted \gmres basis
	$\widetilde{V}$ does not form an orthonormal basis, it cannot be
	removed from the norm calculation; \ie the Arnoldi vectors in each
	restart cycle are orthonormal to one another, but most vectors from
	different cycles exhibit no orthogonality relation.  In addition, the
	different cycles are coupled as described in
	\Cref{rem:gen_arnoldi_vec}.  Thus, expressing the influence of $\bc$ on
	$\norm{\br_m^{(\ell)}}$ is not straightforward.
\end{remark}
\begin{remark}[The effect of $\bc\neq\bnull$ on eigen/Ritz values]
	The choice of $\bc$ influences only the final restarted \gmres
	approximation. Observe that it does not not effect any other
	approximation or the previous prescribed residual norms. What $\bc\neq
	\bnull$ does effect is all the eigenvalues and Ritz values at every
	cycle.  As $\widetilde{H}(\bc)$ is a rank-one update of
	$\widetilde{H}$, if we prescribe eigenvalues and Ritz values when
	constructing $\widetilde{H}$, we could avail of the theory concerning
	the behavior of eigenvalues with respect to rank-one updates of
	matrices to make some general statements about how close the
	eigenvalues and Ritz values of $\widetilde{H}(\bc)$ to the ones
	prescribed for $\widetilde{H}$, depending on $\norm{\bc}$; see, \eg,
	\cite{HornJohnson:2012:1, StewartSun:1990:1} for some discussion of this topic.  However,
	we cannot represent them in a straightforward manner as functions of
	$\bc$.
	\label{rem:eigs_of_A}
\end{remark}
\begin{remark}
	We note that any further non-trivial iterations of restarted \gmres are
	completely determined by the restarted \gmres factorization we have
	constructed.
\end{remark}

\Cref{theorem:restart-gmres-nostag-assignment} shows existence of a matrix and a right-hand side producing prescribed convergence curve of restarted \gmres.
With \Cref{corollary:eigs_of_A,rem:gen_arnoldi_vec}, we can obtain a complete characterization of these matrices and right-hand sides.

\subsubsection{Stagnation at the end of a cycle}

If there is stagnation of length $s$ at the end of cycle $k$, then the first
$s$ columns of $H^{(k+1)}$ are fully determined by $\underline{H^{(k)}}$. We
also need to define action on further $s$ linearly independent vectors to
uniquely determine $A$ and $\vb$. This corresponds to specifying an equal number of
additional iterations of cycle $\ell+1$ and possibly further cycles depending
on the number of end-of-cycle stagnating iterations.

%%%%% NEW SECTION %%%%%
\section{\blgmres admissible convergence behavior}\label{section:admissible-convergence-blGMRES}
\Cref{section:admissible-convergence-rGMRES} contains a review of the theory
from \cite{DuintjerTebbensMeurant:2019:1}, but we have also presented new
proofs of some of the authors' results. While these alternative proofs are of
general mathematical interest, their primary purpose is to present the theory
of \cite{DuintjerTebbensMeurant:2019:1} in language more amenable to extending
that work to the block \gmres (\blgmres) setting.  

We build heavily on theory developed in \cite{KubinovaSoodhalter:2020:1}
concerning specifying residual convergence behavior of \blgmres.  In
\cite{DuintjerTebbensMeurant:2019:1}, the authors show that specifying the
convergence behavior of restarted \gmres is achieved by specifying the behavior
of individual cycles and assuring that the final residual of one cycle becomes
the first Arnoldi vector (scaled) of the next cycle.  They characterize the
restrictions imposed by end-of-cycle stagnation. We use the theory in
\cite{DuintjerTebbensMeurant:2019:1} directly, with the help of the language
introduced in the alternative proofs in
\Cref{section:admissible-convergence-rGMRES} to make the same characterization
for restarted block \gmres. The main challenge is to properly characterize the
notion of stagnation in the block \gmres setting. 

As in \Cref{section:admissible-convergence-GMRES}, we may assume \Wlog that $\bX_0=\bnull$.
We note that in \cite{KubinovaSoodhalter:2020:1}, the authors do not consider
the case of block Arnoldi breakdown (\ie the case in the block Arnoldi spanning
set ceases to be a basis of $\K_j(A,\bB)$ at some iteration $j$). We adopts the
same assumption in this paper.

We briefly review \blgmres and the theory in \cite{KubinovaSoodhalter:2020:1};
\WLoG we take $n$ to be a multiple of $p$, following
\cite{KubinovaSoodhalter:2020:1}. \blgmres is a generalization of \gmres for
the case when one is solving a system with multiple right-hand sides
\eqref{eqn:AXB}. One generates the block Krylov subspace, which is often
characterized as the block span of $n\times p$ vectors, \ie,
\begin{align}
	\K_j(A,\bB)
	\coloneq &
	\blspan	
	\curly{
		\bB
		,
		A\bB
		,
		A^2\bB
		,
		\ldots
		,
		A^{j-1}\bB	
	}
	\nonumber
	\\
	\coloneq &
	\braces{
		\sum_{i=0}^{j-1}
		A^i \bB S_i
		\quad | \quad 
		\mbox{for all}
		\quad
		\braces{
			S_i
		}_{i=0}^{j-1}
		\subset\C^{p\times p}
	}.
	\label{eqn:block-krylov-def}
\end{align}
\begin{definition}
	We denote as the \emph{block Krylov matrix} associated with the block Krylov subspace \eqref{eqn:block-krylov-def},
	\begin{align*}
		\bcK
		=
		\begin{bmatrix}
			\bB
			&
			A \bB
			&
			A^2 \bB
			&
			\cdots
			&
			A^{\ell-2} \bB
			&
			A^{\ell-1} \bB
		\end{bmatrix}
		.
	\end{align*}
\end{definition}
Let
\begin{math}
	\bcV_j
	=
	\begin{bmatrix}
		\bV_1
		&
		\bV_2
		&
		\cdots
		&
		\bV_j
	\end{bmatrix}
	\in\C^{n\times jp}
\end{math}
and $\bV_\ell\in\C^{n\times p}$, $\ell = 1,2,\ldots, j$, be the orthonormal
basis generated by a block Arnoldi process satisfying the block Arnoldi
relation $A\bcV_j=\bcV_{j+1}\underline{\cH_j}$ where
$\underline{\cH_j}\in\C^{(j+1)p\times jp}$ is block upper-Hessenberg with the
lower subdiagonal blocks being normalizing quantities resulting from the
normalization step of the block Arnoldi procedure. Then for $\bX_j =
\bcV_j\bY_j$, the \blgmres minimizer, $\bY_j\in\C^{jp\times p}$ is
characterized by the Frobenius norm (or equivalently the column-wise $2$-norm)
minimization
\begin{align*}
	\bY_j
	=
	\argmin_
	{\bY\in\C^{jp\times p}}
	\norm[auto]
	{
		\underline{\cH_j}\bY 
		- 
		\bE_1 S_0
	}_F
\end{align*}
where $\bE_1\in\R^{\prn{j+1}p\times p}$ contains the first $p$ columns of the $jp\times
jp$ identity matrix, $\be_i$ is the $i$th standard basis vector, and $\bB = \bV_1 S_0$ is a QR-factorization of the
right-hand side.  A variety of implementations have been developed using these
theoretical underpinnings; see, \eg \cite[Chapter 6.12]{Saad2003Iterative} and
\cite{Gutknecht:2007:1,BakerDennisJessup:2006:1,Vital:1990:1}.  

Considering these methods in terms of block vectors brings advantages for
analysis, by maintaining certain structural properties that are lost when not
fully considering the block structure.  For example, the authors of
\cite{SimonciniGallopoulos:1996:1} show that the block minimization is
connected to a matrix-valued polynomial minimization problem, \cf
\Cref{theorem:block-polynomial-minimization-blgmres}.  
\begin{definition}
	\label{definition:matrix-valued-polynomial-solvents}
	Let $C_k\in\C^{p\times p},\ k=0,1,\ldots, n-1$ be the block coefficients defining
	the \emph{matrix-valued polynomial}
	\begin{align*}
		M(\lambda)
		=
		\lambda^n I
		-
		\sum_{k=0}^{n-1}
		\lambda^k C_k
		\in\C^{p\times p}.
	\end{align*}
	We call $M(\lambda)$ a \emph{$\lambda$-matrix} and for a matrix $X\in\C^{p\times p}$
	\begin{align*}
		M(X)
		=
		X^n
		- 
		\sum_{k=0}^{n-1}
		C_k X^k
	\end{align*}
	a \emph{matrix polynomial}. A number $\tilde{\lambda}$ is called a
	\emph{latent root} if $M\prn{\tilde{\lambda}}$ is singular.  A matrix
	$S\in\C^{p\times p}$ is called a \emph{right solvent} of $M$ if $M(S) =
	0\in\C^{p\times p}$.
\end{definition}
For an $F\in\C^{n\times n}$ we can define the matrix polynomial operator $M(F):\C^{n\times p}\rightarrow\C^{n\times p}$ via
\begin{align*}
	M(F)\circ V
	=
	F^n V
	-
	\sum_{k=0}^{n-1}
	F^k V C_k.
\end{align*}
for $V\in\C^{n\times p}$.
This matrix-valued polynomial is associated to the block companion matrix
\begin{align*}
	\cC
	=
	\begin{bmatrix}
		0 & 0 & \dots & 0 & C_0 \\
		I & 0 & \dots & 0 & C_1 \\
		0 & I & \dots & 0 & C_2 \\
		\vdots & \vdots & \ddots & \vdots & \vdots \\
		0 & 0 & \dots & I & C_{n-1}
	\end{bmatrix},
\end{align*}
whose eigenvalues are the latent roots of the $\lambda$-matrix of the matrix polynomial.

The authors of \cite{KubinovaSoodhalter:2020:1} sought to extend the results of
\cite{GreenbaumPtakStrakos:1996:1,TebbensMeurant:2012:1} to the \blgmres
setting.  They observed that this is more easily accomplished by carefully
working with the block structure, using the tools from
\cite{FrommerLundSzyld:2017:1,SimonciniGallopoulos:1996:1}. The approach
considers \blgmres as an iteration occurring on the right vector space
$\S^N\coloneq\C^{n\times p}$ with elements of $\S\coloneq\C^{p\times p}$ acting
as scalars being multiplied from the right. We can \Wlog assume $n=N p$ so that
we can consider \eqref{eqn:AXB} to be an equation $A\bX=\bB$ over $\S$. We
refer the reader to \cite[Table 1]{KubinovaSoodhalter:2020:1} and the text of
the containing section for a full description of how this right vector space is
structured for the purpose of the analysis.  The salient points are:
\begin{itemize}
	\item upper-triangular matrices with positive (non-negative) diagonal
		entries, which are called \emph{normalizing quantities}, denoted $\S^+$ ($\S^+_0$)
		\cite{FrommerLundSzyld:2017:1}, take the role of positive (non-negative) scalars;
	\item the role of normalization of $\bZ\in\S^N$ is thus taken
		by the economy QR-factorization $\bZ=\bV R$, performed such
		that $R\in\S^+_0$, and we denote $\blnorm{\bZ}=R$;
	\item the inner product generalizes naturally to the block inner
		product, where for $\bV,\bW\in\S^N$, we denote
		$\bldual{\bV}{\bW}\coloneq\bW^\ast\bV\in\S$, observing that this
		means $\bldual{\bV}{\bV}=\blnorm{\bV}^\ast\blnorm{\bV}$; 
	\item the role of absolute value in this setting for $S\in\S$ is $\abs[auto]{S} := \cholu(S^\ast S)$, where \cholu refers
		to the upper-triangular Cholesky factor of
		the argument;
	\item normalizing quantities are partially ordered by connecting them
		to the Loewner ordering of \hpd matrices; \ie, 
	\begin{align*}
		R_1 \leq R_2 \iff R_1^\ast R_1 \preceq R_2^\ast R_2,
	\end{align*}
	in the Loewner sense, for normalizing quantities $R_1,R_2\in\S^+_0$. It
	should be noted that one generally cannot make definitive bounding
	statements or discuss minimization in the context of a Loewner partial
	ordering. However, the mechanics of block GMRES guarantee that the
	particular normalizing quantities associated to block GMRES residual
	norms are ordered.
\end{itemize}
This setup allows for the definition and specification of admissible
convergence behavior of \blgmres in \cite{KubinovaSoodhalter:2020:1}.    
\begin{remark}
	As the authors of \cite{KubinovaSoodhalter:2020:1} do not consider the
	case in which the block Krylov subspace loses dimension, we also omit
	that case and always assume that each block Arnoldi vector has full
	column rank $p$.  This avoids the block version of the restarted Krylov
	basis losing linear independence, except in the case analogous to the
	non-block setting whereby end-of-cycle stagnation leads to linear
	dependence; cf. \Cref{section:bl-stag-mirroring}.
\end{remark}

\begin{theorem}[paraphrasing Theorem 4.1 from \cite{KubinovaSoodhalter:2020:1}]
	Let $\curly{F_0,F_1,\ldots, F_{n-1}}\subset\S^+_0$ be an admissible
	sequence of normalizing quantities; \ie $F_i\geq F_{i+1}$ for all $i$.
	Then a matrix/right-hand-side pair $(A,\bB)$ can be constructed such
	that \blgmres applied to \eqref{eqn:AXB} produces a sequence of
	residuals satisfying $\blnorm{\bR_i}=F_i$, and $A$ can be constructed
	to have any eigenvalues and Ritz values, at each block Arnoldi
	iteration.  The matrix/right-hand-side pair admit the structure 
	\begin{align*}
		A
		=
		\bcV\cD\cU\cC\cU^{-1}\cD^{-1}\bcV^\ast
		\quad\mbox{and}\quad 
		\bB
		=
		\bcV\bE_1 F_0,
	\end{align*}
	where $\bcV$ is a unitary matrix, $\cC$ is the block companion matrix
	corresponding to a matrix-valued polynomial with solvents (a block generalization of eigenvalues; see \Cref{definition:matrix-valued-polynomial-solvents}) encoding the
	eigenvalues of $A$, and $\cU$ is the block upper-triangular matrix with
	identity blocks along the block diagonal encoding the Ritz solvents and therefore the Ritz values,
	and $\cD$ is block diagonal with entries in $\S^+$ encoding the block
	residual norm sequence. The Ritz values can be arbitrarily assigned to
	be any non-zero values, as long as \blgmres is specified to exhibit no
	partial or full stagnation; \cf \Cref{definition:block-stagnation}.
	\label{theorem:block-gmres-specification}
\end{theorem}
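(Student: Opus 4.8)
The plan is to mirror, in the right $\S$-vector space $\S^N$, the non-block construction recalled in \eqref{eqn:A-full-gmres-decomp} and \eqref{eq:first_row_form}, following the construction of \cite{KubinovaSoodhalter:2020:1} (this statement only paraphrases their Theorem~4.1). First I would encode the spectral data: choose a matrix-valued polynomial $M(\lambda)=\lambda^n I-\sum_{k=0}^{n-1}\lambda^k C_k$ whose right solvents are prescribed so that the latent roots of its block companion matrix $\cC$ realize the desired eigenvalues of $A$ with multiplicities, $\cC$ playing here the role that $C$ plays in the scalar case. Next I would encode the Ritz data: choose a block upper-triangular $\cU\in\S^{N\times N}$ with identity diagonal blocks whose strictly upper block entries are the coefficients of the characteristic matrix polynomials of the successive block Hessenberg matrices $\cH_j$, the block analogue of the fact that the columns of $U^{-1}$ carry the characteristic polynomials of the $H_j$ \cite{TebbensMeurant:2012:1}; this fixes the Ritz solvents, hence the Ritz values, at every block Arnoldi step. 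Finally I would encode the residual data: choose $\cD\in\S^{N\times N}$ block diagonal with entries in $\S^+$ determined from $F_0,\dots,F_{n-1}$ by the block version of \eqref{eq:first_row_form}, so that the first block row/column of $(\cD\cU)^{-1}$ reproduces the prescribed telescoping of $\bR_i^\ast\bR_i = F_i^\ast F_i$.

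With these in hand, I would set $H=\cD\cU\cC\cU^{-1}\cD^{-1}$ (block upper-Hessenberg, similar to $\cC$), pick any unitary $\bcV\in\S^{N\times N}$, and put $A=\bcV H\bcV^\ast$ and $\bB=\bcV\bE_1F_0$ (the first block column of $\bcV$ scaled by $F_0$). One then verifies the block Krylov--companion relation $A\bcK=\bcK\cC$ for the block Krylov matrix $\bcK$ of $(A,\bB)$, and that $\bcK=\bcV(\cD\cU)$ is its economy block-QR factorization with $\cD\cU$ written as the product of the $\S^+$-diagonal factor $\cD$ and the unit upper-triangular factor $\cU$; hence $\bcV$ is precisely the block Arnoldi basis, the principal submatrices of $H$ are the $\cH_j$ with the prescribed Ritz data, and $A$ has the prescribed eigenvalues. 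It then remains to run \blgmres on $(A,\bB)$: using the equivalence of the Frobenius-norm minimization with the matrix-polynomial minimization (\Cref{theorem:block-polynomial-minimization-blgmres}) together with the residual orthogonality characterization in the right module, the block residual satisfies $\bR_i=\bcV_{i+1}\bG_i$ with $\bG_i$ the block analogue of \eqref{eq:residual-coeff} built from the first block row of $(\cD_i\cU_i)^{-1}$, and $\blnorm{\bR_i}=F_i$ follows from the way $\cD$ was constructed. Unitary invariance of \blgmres behavior then accounts for the stated freedom in choosing $\bcV$.

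The main obstacle is that every scalar identity in the classical construction ($|h_i|^2=\norm{\br_{i-1}}^2-\norm{\br_i}^2$, $\be_1^T(DU)^{-1}\be_1=\norm{\br_0}^{-1}$, and so on) must be promoted to an identity between $p\times p$ matrices, and the noncommutativity of $\S$ forces one to track multiplication sides throughout: \blgmres iterates live in a \emph{right} $\S$-module, so $\cC$ must be built from \emph{right} solvents and the least-squares and orthogonality steps are right-module statements. Concretely, the step that consumes the admissibility hypothesis is the construction of $\cD$: one needs upper-triangular $\S^+$-factors of the successive Loewner differences of the $F_i^\ast F_i$ (equivalently of the differences of their inverses), and these exist precisely because of the assumed ordering of the $F_i$, via $\abs[auto]{\cdot}=\cholu(\cdot)$. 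A secondary point to check is the standing no-breakdown assumption: the constructed $(A,\bB)$ yields full-column-rank block Arnoldi vectors because $\cD\cU$ is invertible, so the block (and later restarted block) Krylov matrix retains linear independence; cf. \Cref{section:admissible-convergence-blGMRES}.
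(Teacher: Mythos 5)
Your proposal is correct and takes essentially the same route as the construction this theorem paraphrases (Theorem 4.1 of \cite{KubinovaSoodhalter:2020:1}, which the paper cites rather than reproves): eigenvalues encoded as right solvents in the block companion matrix $\cC$, Ritz solvents in $\cU$, residual data in $\cD$ via the block analogue of \eqref{eq:first_row_form}, and $A=\bcV\,\cD\cU\cC(\cD\cU)^{-1}\bcV^\ast$, $\bB=\bcV\bE_1F_0$ with $\bcK=\bcV(\cD\cU)$ as the block QR factorization identifying $\bcV$ as the block Arnoldi basis. Your remarks on noncommutativity, the right-module formulation, and the Loewner ordering supplying the $\S^+_0$ factors needed to build $\cD$ are exactly the points the cited argument relies on, so there is no gap to flag.
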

For greater detail and full explanation of the theory, see
\cite{KubinovaSoodhalter:2020:1}. For $j=1,2,\ldots, k$, the $jp\times jp$
principal submatrix $\cH_j$ of the block upper Hessenberg $\cH=\cD\cU\cC\cU^{-1}\cD^{-1}$ is
associated to a matrix polynomial with block companion matrix encoded by
$\cU^{-1}$ to define the Ritz solvents (and therefore Ritz values) for each $j$.

%%%%% NEW SECTION %%%%%
\section{Extension to restarted \blgmres}\label{section:admissible-convergence-rblGMRES}
With the theory of \Cref{section:admissible-convergence-GMRES} extending
naturally to the \blgmres case, we explore how this theory extends to the case
of restarted \blgmres.  In \Cref{section:admissible-convergence-rGMRES}, we
have presented and extended the roadmap of \cite{DuintjerTebbensMeurant:2019:1}
for using admissible convergence behavior theory to specify behavior for each
restart cycle and then to connect the cycles together.  In
\cite{DuintjerTebbensMeurant:2019:1}, it is shown that the occurrence of
end-of-cycle stagnating iterations restricts the admissible behavior for a
corresponding number of iterations in the subsequent cycle.  In
\Cref{section:admissible-convergence-rGMRES}, we framed this theory and some of
the proofs using a language that accommodates its extension to the restarted
\blgmres setting.  In the case that there is no stagnating behavior, the
approach discussed in \Cref{section:admissible-convergence-rGMRES} allows us to
extend the theory of \cite{KubinovaSoodhalter:2020:1} without difficulty.  The
issue of stagnation requires development of additional results, in order to
extend the ideas in \Cref{section:admissible-convergence-blGMRES} to the
restarted \blgmres case.  

Let $M$ denote the cycle length for restarted \blgmres.  As with the restarted
\gmres case, \Wlog we can take $M$ to be constant for all cycles.  Furthermore,
let us assume that $N=\ell M$ so that $n=\ell M p$.

%%%%% NEW SUB-SECTION %%%%%
\subsection{Mirroring of stagnation behavior}\label{section:bl-stag-mirroring}

For \blgmres, we must clearly define the notion of stagnation.  This was
discussed in \cite{Soodhalter:2017:1}, and we present a more general treatment of
the idea.
\begin{definition}
	For a block Krylov subspace method (\eg \blgmres), if
	$\bX_j=\bX_{j-1}$ then we write that the method has
	\emph{totally stagnated} at iteration $j$.  If
	$\bX_j\neq\bX_{j-1}$ but $\bX_j\bu = \bX_{j-1}\bu$ for some
	$\bu\in\C^p\setminus\braces{\bnull}$, then we say that the method has
	\emph{partially stagnated} with respect to $\bu$.
	\label{definition:block-stagnation}
\end{definition}
As we assume that $\dim\prn{\range{\cK_j}}=jp$ for all $j\leq N$, the occurrence of a
partial stagnation can be clearly characterized in terms of the block Arnoldi
basis.
\begin{lemma}
	Let there be a block stagnation with respect to $\bu$ at iteration $j$.  If we
	express the $j$th \blgmres iterate as $\bX_j=\sum_{i=1}^j\bV_i F_i^{(j)}$,
	then it follows that $F_j^{(j)}$ is singular, and
	$\bu\in\cN\prn{F_j^{(j)}}$.
\end{lemma}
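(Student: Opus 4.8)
The plan is to work directly from the block Arnoldi relation and the characterization of the \blgmres{} iterate to translate the partial stagnation hypothesis into a statement about the trailing block coefficient $F_j^{(j)}$. First I would recall that $\bX_j = \bcV_j\bY_j$ where $\bY_j$ solves the Frobenius-norm minimization over $\C^{jp\times p}$, and that writing $\bY_j$ in block-row form $\bY_j = \begin{bmatrix} (F_1^{(j)})^T & \cdots & (F_j^{(j)})^T \end{bmatrix}^T$ identifies the coefficients $F_i^{(j)}$ in the stated expansion $\bX_j = \sum_{i=1}^j \bV_i F_i^{(j)}$. The key structural fact is the nesting relation: since $\bcV_{j-1}$ consists of the first $(j-1)p$ columns of $\bcV_j$, and since both $\bX_{j-1}$ and $\bX_j$ solve the \blgmres{} least-squares problem over nested block Krylov spaces, the block coefficients for $i \le j-1$ are \emph{not} in general equal (the projection is not nested coefficientwise). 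So the argument cannot simply compare coefficients; instead it must use optimality.

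The core step I would carry out is the following. Partial stagnation with respect to $\bu$ means $\bX_j\bu = \bX_{j-1}\bu$, i.e. the vector $\bX_j\bu \in \C^n$ lies in $\range(\bcV_{j-1}) = \K_{j-1}(A,\bB)$. On the other hand, $\bX_j\bu = \bcV_j\bY_j\bu = \sum_{i=1}^j \bV_i(F_i^{(j)}\bu)$, and since $\braces{\bV_i}_{i=1}^j$ have mutually orthonormal columns spanning $\K_j$, the component of $\bX_j\bu$ in the direction of $\bV_j$ is exactly $\bV_j(F_j^{(j)}\bu)$. For $\bX_j\bu$ to lie in $\K_{j-1}$ — which is orthogonal to $\range(\bV_j)$ — this component must vanish: $\bV_j F_j^{(j)}\bu = \bnull$, and since $\bV_j$ has full column rank $p$ by our standing assumption, $F_j^{(j)}\bu = \bnull$. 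Because $\bu \neq \bnull$, this shows $\bu \in \cN(F_j^{(j)})$ and hence $F_j^{(j)}$ is singular. Conversely one should check consistency with the hypothesis $\bX_j \neq \bX_{j-1}$, which just rules out $F_j^{(j)} = 0$ entirely, so $F_j^{(j)}$ is singular but nonzero — though the lemma statement only asserts singularity and membership of $\bu$ in the nullspace, so that observation is incidental.

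The main obstacle I anticipate is being careful about the claim that the $\bV_j$-component of $\bX_j\bu$ equals $\bV_j(F_j^{(j)}\bu)$ and that orthogonality to $\K_{j-1}$ forces it to vanish. This rests on the orthonormality of the block Arnoldi basis (each $\bV_i^\ast\bV_i = I_p$ and $\bV_i^\ast\bV_{i'} = 0$ for $i \neq i'$), which is exactly what is guaranteed under the standing assumption that every block Arnoldi vector has full column rank $p$ and no block breakdown occurs. With that in hand the orthogonal decomposition $\C^n = \K_{j-1} \oplus \range(\bV_j) \oplus \K_j^\perp$ makes the vanishing of the middle component immediate. A secondary subtlety is simply bookkeeping the identification of $F_i^{(j)}$ with block rows of $\bY_j$, which is routine given the definition $\bX_j = \bcV_j\bY_j$ and the block-column partition of $\bcV_j$.
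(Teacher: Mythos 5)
Your proposal is correct and follows essentially the same route as the paper: both arguments express the partial stagnation identity $\bX_j\bu=\bX_{j-1}\bu$ in the block Arnoldi basis and use the orthonormality of that basis (no breakdown, each $\bV_i$ of full column rank) to force the $\bV_j$-component, namely $\bV_jF_j^{(j)}\bu$, to vanish, giving $F_j^{(j)}\bu=\bnull$ and hence singularity of $F_j^{(j)}$. The only cosmetic difference is that the paper rearranges the equality so that one side lies in $\mathrm{span}\{\bV_1,\ldots,\bV_{j-1}\}$ and the other in $\mathrm{range}(\bV_j)$ and concludes both sides are zero, whereas you phrase the same step as an orthogonal projection onto $\mathrm{range}(\bV_j)$.
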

\begin{proof}
	Consider that if we express partial stagnation with respect to
	$\bu$ in terms of the block Arnoldi basis, we obtain
	\begin{align*}
		\sum_{i=1}^j
		\bV_i F_i^{(j)}\bu
		=
		\sum_{i=1}^{j-1}
		\bV_i F_i^{(j-1)}\bu
		\iff
		\sum_{i=1}^{j-1}
		\bV_i
		\prn{
			F_i^{(j-1)} - F_i^{(j)}
		}
		\bu
		=
		\bV_j F_j^{(j)}\bu
	\end{align*}
	As we assume there has been no breakdown of the block Arnoldi process
	and that $\bu\neq\bnull$, it follows from the block Arnoldi vectors
	being an orthonormal basis that both sides of the equation must be zero
	for the equation to hold.  This implies that $F_j^{(j)}\bu=\bnull$.%%
	\footnote{
		Observe that the orthogonality of the block Arnoldi vectors
		also implies that coefficient differences $F_i^{(j-1)} -
		F_i^{(j)}$ must also be singular with $\bu$ being in the
		intersection of their null spaces
	}
\end{proof}
The conclusion drawn in \cite{Soodhalter:2017:1} is that some subspace of
$\range\prn{\bV_j}$ does not contribute to the \blgmres minimization at
iteration $j$.  In the single-vector case of $p=1$, this reduces down to the
coefficient simply being zero, meaning that the minimization produces no
improvement in the single new Arnoldi direction.  We observe also that it
follows from straightforward algebraic manipulation that a \blgmres
partial stagnation with respect to $\bu$ necessarily means that
\begin{align*}
	\bu^\ast
	\bldual[auto]{\bR_j}{\bR_j}
	\bu
	=
	\bu^\ast
	\bldual[auto]{\bR_{j-1}}{\bR_{j-1}}
	\bu;
\end{align*}
\ie, the block residual normalizing quantity does not completely stagnate, but
it stagnates when tested against $\bu$.

We include \cite[Theorem 3.6]{KubinovaSoodhalter:2020:1} which shows how the
block residual polynomial is obtained as the minimizer in the Loewner sense,
demonstrating why the Loewner-based partial ordering of residual normalizing
quantities shown in \Cref{theorem:block-gmres-specification} is the correct
definition of an admissible convergence sequence. Furthermore, it is useful
when discussing the correct generalization of the stagnation mirroring results
in \Cref{section:admissible-convergence-rGMRES}.

%%%%%%%%%% THEOREM %%%%%%%%%%

\begin{theorem}{\cite[Theorem 3.6]{KubinovaSoodhalter:2020:1}}
	\label{theorem:block-polynomial-minimization-blgmres}
	Let $\cP_j$ be the set of all $\lambda$-matrices of degree at most $j$.
	The \blgmres residual polynomials $\widehat{P}_j$ satisfy
	\begin{align}
		\widehat{P}_j = 
		\underset{
			p
			\in\cP_j
			\atop
			P(0) 
			=
			I
		}
		{\operatorname{argmin}} \bldual[auto]{P(\bcA )\circ \bB}{P(\bcA )\circ \bB}.
		\label{eq:res_poly_block}
	\end{align}
\end{theorem}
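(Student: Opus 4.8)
The plan is to reduce the matrix--polynomial minimization on the right of \eqref{eq:res_poly_block} to the familiar affine minimization of the residual over the block Krylov subspace $\K_j(A,\bB)$, and then to show that the block \gmres residual is optimal not merely for the Frobenius norm but for the finer Loewner partial order; this last point is the real content of the statement.

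First I would make the dictionary between feasible polynomials and admissible residuals precise. Every $\bZ\in\K_j(A,\bB)$ is of the form $\bZ=\sum_{i=0}^{j-1}A^i\bB S_i$ with $S_i\in\C^{p\times p}$, so $\bB-A\bZ=P_j(A)\circ\bB$ with $P_j(\lambda)=I-\sum_{i=0}^{j-1}\lambda^{i+1}S_i$, a matrix polynomial of degree at most $j$ with $P_j(0)=I$; conversely every such $P_j$ arises this way. Hence minimizing $\bldual{P_j(A)\circ\bB}{P_j(A)\circ\bB}$ over $\{P_j:\deg P_j\le j,\ P_j(0)=I\}$ coincides with minimizing $\bldual{\bB-A\bZ}{\bB-A\bZ}$ over $\bZ\in\K_j(A,\bB)$, and the block \gmres residual $\bR_j=\bB-A\bX_j$ corresponds to a feasible polynomial $\widehat P_j$ (clearly $\widehat P_j(0)=I$).

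Next I would exploit the optimality of $\bX_j=\bcV_j\bY_j$. Because $\bY_j$ solves the least-squares problem $\min_\bY\norm[auto]{\underline{\cH_j}\bY-\bE_1 S_0}_F$, the associated (column-wise) normal equations are equivalent to the block orthogonality relation $\bldual{\bR_j}{\bW}=\bnull$ for every $\bW\in A\K_j(A,\bB)$, i.e.\ $(A\bcV_j)^\ast\bR_j=\bnull$. Given an arbitrary $\bZ\in\K_j(A,\bB)$, we have $\bZ-\bX_j\in\K_j(A,\bB)$ and hence $A(\bZ-\bX_j)\in A\K_j(A,\bB)$, so $\bB-A\bZ=\bR_j-A(\bZ-\bX_j)$ and the two cross terms $\bldual{\bR_j}{A(\bZ-\bX_j)}$ and $\bldual{A(\bZ-\bX_j)}{\bR_j}$ vanish by the orthogonality relation, giving the Pythagorean identity
\begin{align*}
	\bldual{\bB-A\bZ}{\bB-A\bZ}
	=
	\bldual{\bR_j}{\bR_j}
	+
	\bldual{A(\bZ-\bX_j)}{A(\bZ-\bX_j)}.
\end{align*}
Since $\bldual{A(\bZ-\bX_j)}{A(\bZ-\bX_j)}=\prn{A(\bZ-\bX_j)}^\ast\prn{A(\bZ-\bX_j)}$ is Hermitian positive semidefinite, we obtain $\bldual{\bB-A\bZ}{\bB-A\bZ}\geq\bldual{\bR_j}{\bR_j}$ in the Loewner order for every feasible $\bZ$; hence $\widehat P_j$ is the Loewner argmin in \eqref{eq:res_poly_block}. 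Uniqueness follows from the standing no-breakdown assumption: writing $\bZ-\bX_j=\bcV_j\bW$ we have $A(\bZ-\bX_j)=\bcV_{j+1}\underline{\cH_j}\bW$, so equality forces $\underline{\cH_j}\bW=\bnull$ and, $\underline{\cH_j}$ having full column rank, $\bW=\bnull$.

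The genuinely delicate point --- and the reason the statement is not a triviality --- is the passage from Frobenius-norm optimality to Loewner optimality: a priori $\bX_j$ only minimizes $\norm[auto]{\bR_j}_F$, and there is no reason to expect one iterate to be simultaneously optimal in the much stronger $p\times p$ matrix partial order. What makes it work is that the orthogonality relation annihilates the cross terms without reference to any norm, so the Pythagorean identity holds at the level of the matrix $\bldual{\cdot}{\cdot}$ itself, not merely its trace. Accordingly, in a full write-up the step that deserves the most care is the verification that the Frobenius least-squares condition defining $\bY_j$ really is equivalent to the block orthogonality $(A\bcV_j)^\ast\bR_j=\bnull$ --- via the normal equations for the column-wise problems --- as this is the only place the definition of the block \gmres iterate enters.
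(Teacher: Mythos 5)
The paper does not prove this statement itself---it imports it verbatim as \cite[Theorem 3.6]{KubinovaSoodhalter:2020:1}---so there is no internal proof to compare against, but your argument is correct and is essentially the standard one underlying the cited result: the bijection between degree-$\leq j$ matrix polynomials with $P_j(0)=I$ and corrections in $\K_j(A,\bB)$, the block Petrov--Galerkin orthogonality $(A\bcV_j)^\ast\bR_j=\bnull$ extracted column-wise from the Frobenius least-squares problem, and the resulting matrix-level Pythagorean identity whose second term is Hermitian positive semidefinite, giving Loewner optimality. You correctly isolate the one genuinely non-obvious point, namely that the cross terms cancel as $p\times p$ matrices and not merely in trace, which is exactly what upgrades Frobenius optimality to optimality in the partial order; the uniqueness argument via full column rank of the block Hessenberg matrix under the paper's standing no-breakdown assumption is also sound.
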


The notion of partial stagnation from \Cref{definition:block-stagnation} can be
developed from these pieces. We present a lemma generalizing the mirroring of
stagnation to the \blgmres case.

%%%%%%%%%% PROPOSITION %%%%%%%%%%

\begin{proposition}\label{proposition:block-stagnation-in-a-direction}
	Let there be stagnation of length $s$ at the end of the $k$th cycle with respect to $\bu\in \C^{p}$, i.e.,
	\begin{align*}%\label{eq:block_stag}
		\bu^*\bldual[auto]{\bR^{(k)}_{m-s}}{\bR^{(k)}_{m-s}}\bu 
		=
		\bu^*\bldual[auto]{\bR^{(k)}_{m}}{\bR^{(k)}_{m}}\bu.
	\end{align*}
	Then there is stagnation of the same length at the beginning of the $(k+1)$st cycle also with respect to $\bu$; i.e.,
	\begin{align*}
		\bu^*\bldual[auto]{\bR^{(k+1)}_{0}}{\bR^{(k+1)}_{0}}\bu 
		=
		\bu^*\bldual[auto]{\bR^{(k+1)}_{s}}{\bR^{(k+1)}_{s}}\bu.
	\end{align*}
\end{proposition}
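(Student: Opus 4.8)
The plan is to replay the polynomial‐minimization argument behind \Cref{prop:simplified-stagnation-mirroring}, now in the right vector space $\S^N$ and with each scalar residual norm replaced by the quantity $\bu^\ast\bldual[auto]{\bR_j}{\bR_j}\bu=\norm[auto]{\bR_j\bu}^2$ obtained by testing against the fixed direction $\bu$.

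First I would strengthen the hypothesis. The block residual normalizing quantities decrease monotonically within a cycle in the Loewner sense, so $\bu^\ast\bldual[auto]{\bR^{(k)}_m}{\bR^{(k)}_m}\bu\le\bu^\ast\bldual[auto]{\bR^{(k)}_j}{\bR^{(k)}_j}\bu\le\bu^\ast\bldual[auto]{\bR^{(k)}_{m-s}}{\bR^{(k)}_{m-s}}\bu$ for $m-s\le j\le m$, and since the two outer terms coincide, all intermediate ones do. Combining the \blgmres Galerkin orthogonality with the Pythagorean identity $\bldual[auto]{\bR^{(k)}_{m-s}}{\bR^{(k)}_{m-s}}=\bldual[auto]{\bR^{(k)}_m}{\bR^{(k)}_m}+\bldual[auto]{\bR^{(k)}_{m-s}-\bR^{(k)}_m}{\bR^{(k)}_{m-s}-\bR^{(k)}_m}$ (which holds because $\bR^{(k)}_{m-s}-\bR^{(k)}_m\in A\K_m(A,\bR^{(k)}_0)$, to which $\bR^{(k)}_m$ is block-orthogonal) and testing against $\bu$ then gives $\bR^{(k)}_m\bu=\bR^{(k)}_{m-s}\bu$. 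Since the block Arnoldi basis has full column rank by assumption, writing $\bR^{(k)}_m\bu-\bR^{(k)}_{m-s}\bu=\bnull$ in the linearly independent columns of $\begin{bmatrix}\bR^{(k)}_0&A\bR^{(k)}_0&\cdots&A^m\bR^{(k)}_0\end{bmatrix}$ forces $\widehat P^{(k)}_m(\lambda)\bu=\widehat P^{(k)}_{m-s}(\lambda)\bu$ for the block residual polynomials; equivalently $\bR^{(k)}_m\bu$ is represented by the degree-$(m-s)$ polynomial $\widehat P^{(k)}_{m-s}$, i.e.\ $\bR^{(k)}_m\bu\in\K_{m-s+1}(A,\bR^{(k)}_0)$. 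This is the block counterpart of the degree reduction $\br^{(k)}_m=\hat p_{m-s}(A)\br^{(k)}_0$.

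Next I would run the argument of \Cref{prop:simplified-stagnation-mirroring} in the direction $\bu$. The inequality $\bu^\ast\bldual[auto]{\bR^{(k+1)}_s}{\bR^{(k+1)}_s}\bu\le\bu^\ast\bldual[auto]{\bR^{(k+1)}_0}{\bR^{(k+1)}_0}\bu$ is again monotonicity (the zero correction is admissible in the cycle-$(k+1)$ minimization). For the reverse inequality, \Cref{theorem:block-polynomial-minimization-blgmres} tested against $\bu$ gives $\norm[auto]{\bR^{(k+1)}_s\bu}^2=\min_{\deg P\le s,\,P(0)=I}\norm[auto]{(P(\bcA)\circ\bR^{(k+1)}_0)\bu}^2$; substituting $\bR^{(k+1)}_0=\bR^{(k)}_m$ and using the composition rule $P(\bcA)\circ\bR^{(k)}_m=(\widehat P^{(k)}_m\,P)(\bcA)\circ\bR^{(k)}_0$, the degree reduction from the previous paragraph is meant to let me replace $\widehat P^{(k)}_m$ by $\widehat P^{(k)}_{m-s}$, so that $(P(\bcA)\circ\bR^{(k)}_m)\bu=((\widehat P^{(k)}_{m-s}\,P)(\bcA)\circ\bR^{(k)}_0)\bu$ with $\widehat P^{(k)}_{m-s}\,P$ of degree $\le m$ and value $I$ at the origin; invoking the degree-$m$ minimality of $\bR^{(k)}_m$ (\Cref{theorem:block-polynomial-minimization-blgmres} at step $m$, tested against $\bu$) then yields $\norm[auto]{\bR^{(k+1)}_s\bu}\ge\norm[auto]{\bR^{(k)}_m\bu}=\norm[auto]{\bR^{(k+1)}_0\bu}$, and the two inequalities give the claim.

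The only nontrivial point, and where genuinely block behavior enters, is the degree bookkeeping just described. Because the cycle-$(k+1)$ residual polynomial $P$ has matrix coefficients, $P(\lambda)\bu$ is a vector-valued polynomial rather than a scalar multiple of $\bu$, so the identity $\widehat P^{(k)}_m(\lambda)\bu=\widehat P^{(k)}_{m-s}(\lambda)\bu$ does not automatically pass through the (noncommutative) product $\widehat P^{(k)}_m(\lambda)\,(P(\lambda)\bu)$; written out, $(P(\bcA)\circ\bR^{(k)}_m)\bu$ a priori contains contributions $A^i\bR^{(k)}_m\bw$ with $\bw$ away from the stagnating direction, which lie in $\K_{m+s+1}(A,\bR^{(k)}_0)$ rather than $\K_{m+1}(A,\bR^{(k)}_0)$. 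To eliminate them one uses that the \blgmres least-squares problem separates by columns, so that the $\bu$-combination of the cycle-$(k+1)$ correction is the ordinary least-squares solution over $\K_s(A,\bR^{(k)}_m)$ with right-hand side $\bR^{(k)}_m\bu$; the task is then to show that this search space cannot decrease $\norm[auto]{\bR^{(k)}_m\bu}$, i.e.\ that $\bR^{(k)}_m\bu\perp A\K_s(A,\bR^{(k)}_m)$, which one should obtain by combining $\bR^{(k)}_m\bu\perp A\K_m(A,\bR^{(k)}_0)$ with the constraints that the partial stagnation imposes on the leading coefficients of $\widehat P^{(k)}_m$. Everything else is a transcription of the single-vector proof.
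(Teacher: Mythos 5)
Your route is the same as the paper's: \Cref{theorem:block-polynomial-minimization-blgmres} tested against $\bu$, the identification $\bR_0^{(k+1)}=\bR_m^{(k)}$, re-expression of the restart residual through the degree-reduced cycle-$k$ polynomial $\widehat{P}_{m-s}$, composition with the cycle-$(k+1)$ polynomial so that the product lies in the degree-$m$ admissible class, and then degree-$m$ optimality, exactly as in the block transcription of \Cref{prop:simplified-stagnation-mirroring} given in the paper. Your opening refinement is correct and worthwhile: using block Galerkin orthogonality and the resulting Pythagorean identity to upgrade the quadratic-form hypothesis to the column identity $\bR_m^{(k)}\bu=\bR_{m-s}^{(k)}\bu$ makes explicit something the paper's chain uses only implicitly when it substitutes $\bR_m^{(k)}=\widehat{P}_{m-s}(\bcA)\circ\bR_0^{(k)}$ inside $\bu^*\bldual[auto]{\cdot}{\cdot}\bu$.

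The problem is that the proposal does not close. The point you yourself identify as the crux --- eliminating the contributions $A^i\bR_m^{(k)}\bw$ with $\bw$ not parallel to $\bu$ that arise because the cycle-$(k+1)$ polynomial has matrix coefficients --- is deferred to the claim $\bR_m^{(k)}\bu\perp A\K_s\prn{A,\bR_m^{(k)}}$, which you say ``one should obtain'' from the cycle-$k$ Galerkin condition together with constraints on the leading coefficients of $\widehat{P}_m^{(k)}$. Since the \blgmres least-squares problem separates by columns, $\bR_s^{(k+1)}\bu$ is precisely the orthogonal-projection residual of $\bR_m^{(k)}\bu$ onto $A\K_s\prn{A,\bR_m^{(k)}}$, so the orthogonality you defer is equivalent to the conclusion of the proposition; asserting it without proof makes the argument circular at exactly the block-specific step. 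Nor is it clear how your suggested combination would produce it: the Galerkin condition at the end of cycle $k$ gives only $\bR_m^{(k)}\bu\perp A\K_m\prn{A,\bR_0^{(k)}}$, while for $\bw$ not parallel to $\bu$ the vectors $A^i\bR_m^{(k)}\bw$, $1\le i\le s$, generally carry components outside $A\K_m\prn{A,\bR_0^{(k)}}$, so that condition alone cannot reach them. The paper, by contrast, never leaves the quadratic form: it converts the hypothesis into equality of the degree-$m$ and degree-$(m-s)$ minima, writes the restart residual via $\widehat{P}_{m-s}$, and compares against the degree-$m$ minimum after composing, i.e.\ it carries out the very substitution you flag as delicate. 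Raising that concern is legitimate, but your proposed repair stops at a restatement of what is to be proved, so as written the proposal is not a proof.
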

%%%%%%%%%% PROOF %%%%%%%%%%
\begin{proof}
	Assume \Wlog for $k=1$, that
	\begin{align*}
		\bu^*\bldual[auto]{\bR^{(1)}_{m-s}}{\bR^{(1)}_{m-s}}\bu 
		=
		\bu^*\bldual[auto]{\bR^{(1)}_{m}}{\bR^{(1)}_{m}}\bu.
	\end{align*}
	Then (denoting $\bR_0^{(1)}\coloneq \bB$)
	\begin{align*}
		\min_{P\in\cP_m\atop P(0)=I} \bu^*\bldual[auto]{P(\bcA )\circ \bR_0^{(1)}}{P(\bcA )\circ \bR_0^{(1)}}\bu 
		= 
		\min_{P\in\cP_{m-s}\atop P(0) 
		=
		I} \bu^*\bldual[auto]{P(\bcA )\circ \bR_0^{(1)}}{P(\bcA )\circ \bR_0^{(1)}}\bu.
	\end{align*}
	Using \eqref{eq:res_poly_block}, we obtain
	\begin{align*}
		\bu^*\bldual[auto]{\bR^{(s)}_{m}}{\bR^{(s)}_{m}}\bu &= \min_{P\in\cP_s\atop P(0)=I}\bu^*\bldual[auto]{P(\bcA )\circ \bR^{(2)}_{0}}{P(\bcA )\circ \bR^{(2)}_{0}}\bu\\
		&= \min_{P\in\cP_s\atop P(0)=I}\bu^*\bldual[auto]{P(\bcA )\circ \bR^{(1)}_{m}}{P(\bcA )\circ \bR^{(1)}_{m}}\bu\\
		&= \min_{P\in\cP_s\atop P(0)=I}\bu^*\bldual[auto]{P(\bcA )\widehat{P}_{m-s}(\bcA )\circ \bR_0^{(1)}}{P(\bcA )\widehat{P}_{m-s}(\bcA )\circ \bR_0^{(1)}}\bu\\
		&\leq \min_{P\in\cP_m\atop P(0)=I}\bu^*\bldual[auto]{P(\bcA )\circ \bR_0^{(1)}}{P(\bcA )\circ \bR_0^{(1)}}\bu\\
		&= \bu^*\bldual[auto]{\bR^{(1)}_{m}}{\bR^{(1)}_{m}}\bu=\bu^*\bldual[auto]{\bR^{(1)}_{m-s}}{\bR^{(1)}_{m-s}}\bu,
	\end{align*}
	whereby the inequality follows from the fact that we replace the
	polynomial product with a degree $m$ polynomial over which we now
	minimize. This concludes the proof.
\end{proof}

This allows us to define an admissible convergence sequence of residual normalizing quantities in the case of restarted \blgmres.

%%%%% NEW SECTION %%%%%

\subsection{Building a restarted \blgmres factorization}\label{section:build-restarted-block-kryl-factorization}

We take the same approach as with the restarted \gmres case in \Cref{section:build-restarted-kryl-factorization}; we build the factorization using a restarted block Krylov basis,  
\begin{align}
	\bcK^{(1)}
	:=& 
	\begin{bmatrix}
		\bB & A\bB & \cdots & A^{M-1}\bB
	\end{bmatrix}
	\nonumber
	\\
	\bcK^{(2)}
	:=& 
	\begin{bmatrix}
		\bR_M^{(1)} & A\bR_M^{(1)} & \cdots & A^{M-1}\bR_M^{(1)}
	\end{bmatrix}
	\nonumber
	\\
	\nonumber
	\vdots&\\
	\bcK^{(\ell)}
	:=& 
	\begin{bmatrix}
		\bR_M^{(\ell-1)} & A\bR_M^{(\ell-1)} & \cdots & A^{M-1}\bR_M^{(\ell-1)}
	\end{bmatrix}.
	\label{eqn:block-krylov-matrix}
\end{align}
\begin{definition}\label{definition:block-krylov-matrix}
	Similar to \Cref{definition:krylov-matrix}, the columns of $\bcK^{(1)}, \bcK^{(2)}, \ldots, \bcK^{(\ell)}$ taken together form the block span of some subspace of $\S^N$. If all columns of all these block vectors are linearly independent, we call this collection of block vectors a \emph{restarted block Krylov subspace} of $\S^N$, and the matrix that has these block vectors as its columns 
	\begin{align*}
		\widetilde{\bcK}
		:=
		\begin{bmatrix}
			\bcK^{(1)}
			&
			\bcK^{(2}	
			&
			\cdots
			&
			\bcK^{(\ell)}
		\end{bmatrix}
		\in\S^{N\times N}
	\end{align*}
	is called a \emph{restarted block Krylov matrix}.
\end{definition}
A similar result to \Cref{proposition:krylov-matrix-rank} can be proven for the
restarted block Krylov matrix.  We note to the reader the initial no-breakdown
assumption takes on the additional role of ensuring we consider the case that
there is no loss of dimension of the block Krylov subspace. 
\begin{lemma}
	Suppose that the block Krylov matrix satisfies the assumption
	\begin{align*}
		\dim\prn{\range\bcK} = n.
	\end{align*}
	Then it follows that the block restarted Krylov matrix
	\eqref{eqn:block-krylov-matrix} is full rank, \ie,
	\begin{align*}
		\dim\prn{\range \widetilde{\bcK}}
		=
		n
		, 
	\end{align*}
	if and only if there is no stagnation at the end of any of the restart
	cycles. 
\end{lemma}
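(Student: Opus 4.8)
The plan is to follow the proof of \Cref{proposition:krylov-matrix-rank} step by step, replacing the scalar facts about the \gmres polynomial by their block counterparts from \Cref{theorem:block-polynomial-minimization-blgmres} and \Cref{proposition:block-stagnation-in-a-direction}, and using the standing assumption that every block Arnoldi vector has full column rank $p$ to guarantee that $\dim\K_j(A,\bB) = jp$ for every $j \le N$ and that each $\bcK^{(k)}$ satisfies $\range(\bcK^{(k)}) = \K_M(A,\bR^{(k-1)}_M)$, of dimension $Mp$. As in the non-block case, the inclusion $\range(\widetilde{\bcK})\subseteq\range(\bcK)$ is automatic, so the statement reduces to whether the $n$ columns of $\widetilde{\bcK}$ are linearly independent.

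For the forward direction, suppose there is end-of-cycle stagnation with respect to some $\bu\neq\bnull$ at the end of cycle $k$. Since the tested residual quantities $\bu^\ast\bldual[auto]{\bR^{(k)}_j}{\bR^{(k)}_j}\bu = \norm[auto]{\bR^{(k)}_j\bu}^2$ are monotonically non-increasing in $j$, it suffices to treat one stagnating step, $\norm[auto]{\bR^{(k)}_M\bu} = \norm[auto]{\bR^{(k)}_{M-1}\bu}$. A Pythagoras-type argument --- the difference of the two iterates applied to $\bu$ lies in $\K_M(A,\bR^{(k)}_0)$, the vector $\bR^{(k)}_M\bu$ is orthogonal to $A\K_M(A,\bR^{(k)}_0)$, and $A$ is nonsingular --- forces $\bX^{(k)}_M\bu = \bX^{(k)}_{M-1}\bu$. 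Since $\bX^{(k)}_{M-1}\bu\in\K_{M-1}(A,\bR^{(k)}_0)$, this gives $\bR^{(k)}_M\bu = \bR^{(k)}_0\bu - A\bX^{(k)}_{M-1}\bu \in \K_M(A,\bR^{(k)}_0) = \range(\bcK^{(k)})$. But the columns of $\bR^{(k)}_M$ are the first $p$ columns of the block of $\widetilde{\bcK}$ belonging to cycle $k+1$, whereas $\range(\bcK^{(k)})$ is spanned by columns of $\widetilde{\bcK}$ lying strictly to their left; hence the columns of $\widetilde{\bcK}$ obey a nontrivial linear relation and $\dim\range(\widetilde{\bcK}) < n$.

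For the converse I would prove by induction on $k$ that, assuming no end-of-cycle stagnation in cycles $1,\dots,k-1$, one has $\range\begin{bmatrix}\bcK^{(1)} & \cdots & \bcK^{(k)}\end{bmatrix} = \K_{kM}(A,\bB)$ (of dimension $kMp$) and, moreover, $\bR^{(k)}_M = A^{kM}\bB\,L^{(k)} + P^{(k)}$ with $L^{(k)}$ nonsingular and the columns of $P^{(k)}$ in $\K_{kM}(A,\bB)$; at $k = \ell$ this yields $\range(\widetilde{\bcK}) = \K_N(A,\bB) = \range(\bcK)$, of dimension $n$. The base case $k=1$ is immediate since $\range(\bcK^{(1)}) = \K_M(A,\bB)$ and, by \Cref{theorem:block-polynomial-minimization-blgmres}, $\bR^{(1)}_M$ has leading block coefficient equal to that of the cycle-$1$ residual polynomial. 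In the inductive step, the absence of stagnation at the end of cycle $k$ makes this leading block coefficient nonsingular; composing with the residual polynomials of the earlier cycles (whose leading coefficients are nonsingular by induction) produces the claimed form of $\bR^{(k)}_M$. A secondary induction on $j = 0,1,\dots,M-1$, solving $A^{kM+j}\bB = (A^j\bR^{(k)}_M - A^j P^{(k)})(L^{(k)})^{-1}$ and observing that $A^j\bR^{(k)}_M$ is a sub-block of $\bcK^{(k+1)}$ while $A^j P^{(k)}$ has columns in $\K_{kM+j}(A,\bB)$, then places every $A^{kM+j}\bB$ in $\range\begin{bmatrix}\bcK^{(1)} & \cdots & \bcK^{(k+1)}\end{bmatrix}$; together with the reverse inclusion this gives $\range\begin{bmatrix}\bcK^{(1)} & \cdots & \bcK^{(k+1)}\end{bmatrix} = \K_{(k+1)M}(A,\bB)$, of dimension $(k+1)Mp$ by the no-breakdown assumption, and re-running the composition one step further verifies the leading-coefficient claim for $\bR^{(k+1)}_M$.

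The step I expect to be the main obstacle is the equivalence used repeatedly above: no partial stagnation at the end of a cycle if and only if the leading block coefficient of that cycle's residual polynomial is nonsingular. In the non-block proof this is the transparent observation that end-of-cycle stagnation is the same as $\br^{(k)}_m \in \cK_m(A,\br^{(k)}_0)$; in the block case it has to be extracted from \Cref{definition:block-stagnation} and the lemma following it, which characterizes partial stagnation by singularity of the final block Arnoldi coefficient, together with the block Arnoldi relation and the non-singularity of the sub-diagonal blocks of $\cH$ (again a consequence of the no-breakdown assumption) in order to tie that coefficient to the leading coefficient of the residual polynomial. The remaining work --- composing residual polynomials across cycles in the non-commutative $^\ast$-algebra of $p\times p$ matrices, where only products, never individual factors, ever need to be inverted --- is routine bookkeeping.
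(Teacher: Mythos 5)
Your proof is correct and follows essentially the same route as the paper's: a single stagnating direction $\bu$ places $\bR_M^{(k)}\bu$ inside $\range(\bcK^{(k)})$ and hence forces a nontrivial column dependence in $\widetilde{\bcK}$, while the converse is an induction over cycles showing that, absent stagnation, each block $\bcK^{(k+1)}$ contributes $Mp$ new dimensions. Where the paper's converse merely asserts the trivial intersection of $\range(A^j\bR_M^{(k)})$ with the accumulated Krylov space, your bookkeeping with the leading block coefficients of the composed residual polynomials is a valid way to justify that assertion, and the equivalence you flag as the main obstacle --- no partial stagnation at the end of a cycle if and only if that cycle's leading residual-polynomial block coefficient is nonsingular --- does hold (apply uniqueness of the orthogonal projection of $\bR_0^{(k)}\bu$ onto $A\,\K_{M-1}(A,\bR_0^{(k)})$, together with linear independence of the columns of $\bcK^{(k)}$ and $A^M\bR_0^{(k)}$ guaranteed by the no-breakdown assumption).
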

\begin{proof}
	Suppose that $\bR_M^{(k)}\bu = \bR_{M-1}^{(k)}\bu$.  Since 
	$\range\prn{\bR_{M-1}^{(k)}}\subset\range\prn{\bcK^{(k)}}$, it follows that 
	\begin{math}
		\dim\prn{
			\range\prn{
				\begin{bmatrix}
					\bcK^{(k)}
					&
					\bR_M^{(k)}
				\end{bmatrix}
			}
		}
		\leq
		Mp-1
	\end{math}

	Conversely, since we have assumed no block Arnoldi breakdown for
	\blgmres, $\bcK$ has full column rank, and thus so does $\bcK^{(1)}$.  
	The assumption of no end-of-cycle stagnation means 
	\begin{align*}
		\range\prn{\bR_M^{(2)}}\cap\K_M\prn{A,\bB}
		=
		\emptyset
		\quad 
		\mbox{and}
		\quad 
		\range\prn{A^j\bR_M^{(2)}}\cap\K_M\prn{A,\bB}
		=
		\emptyset
	\end{align*}
	for $1\leq j\leq M-1$.  Thus we conclude that
	\begin{math}
		\dim\prn{
			\range
			\begin{bmatrix}
				\bcK^{(1)}
				&
				\bcK^{(2)}
			\end{bmatrix}
		}
		=
		2Mp.
	\end{math}
	The result follows by induction.
\end{proof}

%%%%% NEW SECTION %%%%%

\subsection{Residual components in the block Arnoldi vectors}\label{section:block-residual-Arnoldi-components}

For the \blgmres iteration, we can (similar to the \gmres case) express the
block residual $\bR_M$ in terms of the block Arnoldi vectors, and this holds
for each restart cycle.  As we have noted, the $jp\times jp$ upper-Hessenberg
matrix $\cH_j$ (the principal submatrix of $\cH$) can be factorized as
\begin{align}
	\cH_j
	=
	\cD_j\cU_j\cC_j(\cD_j\cU_j)^{-1}.
\end{align}
We show how the matrices $\cD_j$ and $\cU_j$ are related to the components of
the block residual and how the components of the residual with respect to the
computed Arnoldi vectors are related to the convergence behavior. Recall from
\cite[Remark 4.2]{KubinovaSoodhalter:2020:1} that the first row of $(\cD \cU
)^{-1}$ satisfies
\begin{align}\label{eq:first_row_form_block}
	\bE_1^T(\cD \cU )^{-1}\bE_1 
	&=
	F_0^{-1}
	\nonumber
	\\
	\bE_1^T(\cD \cU )^{-1}\bE_k 
	&= 
	\sqrt{\bldual[auto]{F_{k-1}}{F_{k-1}}^{-1} -\bldual[auto]{F_{k-2}}{F_{k-2}}^{-1}}^{\,*} Q_{k}, 
	\\
	\mbox{for}
	\quad
	k
	=
	2,\dots,n,
	\nonumber
\end{align}
for some $Q_{k}\in\S$ such that $\abs[auto]{Q_k}= I$.

%%%%%%%%%% LEMMA %%%%%%%%%%

\begin{lemma}
	The $k$th block residual satisfies  
	\begin{align}
		\bR_k = \bcV ^{(k+1)}\bG_k, \quad \text{where} \quad \bG_k :
		=
		\brak{\bE_1^T\left(\cD ^{(k+1)}\cU ^{(k+1)}\right)^{-1}}^*\bldual[auto]{\bR_k}{\bR_k}.
		\label{eqn:kth-block-residual-coefficients}
	\end{align}
	\label{lemma:block-residual-coefficients}
\end{lemma}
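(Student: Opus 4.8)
The plan is to mirror, step for step, the simplified proof of the non-block companion result (\Cref{eq:residual-coeff}), promoting every scalar operation to the $\S$-scalar calculus of \cite{KubinovaSoodhalter:2020:1} and replacing the ``$\cos=1$'' argument by its block analogue. First I would verify that the proposed coefficient block already carries the correct block norm, $\bldual{\bG_k}{\bG_k}=\bldual{\bR_k}{\bR_k}$. Substituting the definition of $\bG_k$ gives $\bldual{\bG_k}{\bG_k}=\bldual{\bR_k}{\bR_k}\,\bE_1^T(\cD^{(k+1)}\cU^{(k+1)})^{-1}(\cD^{(k+1)}\cU^{(k+1)})^{-\ast}\bE_1\,\bldual{\bR_k}{\bR_k}$, and the formula for the first block row of $(\cD\cU)^{-1}$ recalled above from \cite[Remark 4.2]{KubinovaSoodhalter:2020:1} makes the middle factor telescope to $\bldual{\bR_k}{\bR_k}^{-1}$, exactly as the scalar sum $\norm{\br_0}^{-2}+\sum_i\prn{\norm{\br_i}^{-2}-\norm{\br_{i-1}}^{-2}}$ does, but now as an identity of Hermitian positive-definite matrices.

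Second I would record the Loewner-order equality case of Cauchy--Schwarz on $\S^N$: if $\bZ,\bW\in\S^N$ satisfy $\bldual{\bZ}{\bZ}=\bldual{\bW}{\bW}$ and $\bldual{\bZ}{\bW}=\bldual{\bZ}{\bZ}$, then, using that $\bldual{\bZ}{\bZ}$ is Hermitian and $\bldual{\bW}{\bZ}=\bldual{\bZ}{\bW}^\ast$, one computes $\bldual{\bZ-\bW}{\bZ-\bW}=\bldual{\bW}{\bW}-\bldual{\bZ}{\bZ}=0$, whence $\bZ=\bW$. I would apply this with $\bZ=\bR_k$ and $\bW=\bcV^{(k+1)}\bG_k$: their block norms agree by the previous step together with orthonormality of the columns of $\bcV^{(k+1)}$, so it remains only to establish the ``cross'' identity $\bldual{\bR_k}{\bcV^{(k+1)}\bG_k}=\bldual{\bR_k}{\bR_k}$.

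Third, for that identity I would write the rectangular block Hessenberg as $\underline{\cH_k}=\cH_{k+1}\underline{\mathcal{I}_k}$, substitute the block factorization \eqref{eq:Hess_fact} for $\cH_{k+1}$, and express the block residual through the Frobenius-norm least-squares solution, $\bR_k=\bcV^{(k+1)}\prn{I-\underline{\cH_k}\,\underline{\cH_k}^\dagger}\bE_1F_0$, which is legitimate because the block Frobenius minimization decouples column-wise. Then $\bldual{\bR_k}{\bcV^{(k+1)}\bG_k}=\bldual{\bR_k}{\bR_k}\,\bE_1^T(\cD^{(k+1)}\cU^{(k+1)})^{-1}\prn{I-\underline{\cH_k}\,\underline{\cH_k}^\dagger}\bE_1F_0$, and the task reduces to showing the factor between $\bE_1^T$ and $\bE_1F_0$ equals $F_0^{-1}$. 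Writing $\cC^{(k+1)}$ and $\cU^{(k+1)}$ in compressed block form and inverting $\cC^{(k+1)}$---possible precisely because the constant block coefficient $C_0$ is nonsingular, \ie $0$ is not a latent root, which holds since $A$ is built nonsingular---the $\cC^{(k+1)}$-conjugated inner terms collapse just as in the scalar proof, leaving $\underline{\cH_k}\,\underline{\cH_k}^\dagger=\cD^{(k+1)}\cU^{(k+1)}\left[\begin{smallmatrix}0 & \bnull^T\\ \ast & I\end{smallmatrix}\right](\cD^{(k+1)}\cU^{(k+1)})^{-1}$ with vanishing first block row; hence the projector term drops after contraction with $\bE_1^T$ on the left, while $\bE_1^T(\cD^{(k+1)}\cU^{(k+1)})^{-1}\bE_1=F_0^{-1}$ by the recalled first-row formula. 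This gives $\bldual{\bR_k}{\bcV^{(k+1)}\bG_k}=\bldual{\bR_k}{\bR_k}F_0^{-1}F_0=\bldual{\bR_k}{\bR_k}$, which together with the second step completes the argument.

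I expect the main obstacle to be the non-commutative bookkeeping in this last step: the compressed-form inverse of the block companion matrix and the cancellation of its conjugated factors must be done with the $p\times p$ blocks kept in the correct order, and one must track that the off-diagonal blocks abbreviated by $\ast$ above never meet $\bE_1^T$ on the left. A minor secondary point is justifying the pseudoinverse expression for the block residual within the $\S$-calculus of \cite{KubinovaSoodhalter:2020:1}, which the column-wise Frobenius-norm formulation settles.
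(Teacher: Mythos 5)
Your proposal follows essentially the same route as the paper's proof: first verify $\bldual[auto]{\bG_k}{\bG_k}=\bldual[auto]{\bR_k}{\bR_k}$ from the first-block-row formula for $(\cD\cU)^{-1}$, reduce the claim to the cross identity $\bldual[auto]{\bR_k}{\bcV^{(k+1)}\bG_k}=\bldual[auto]{\bR_k}{\bR_k}$, and establish that identity by inserting the pseudoinverse expression for $\bR_k$ together with the factored form of $\underline{\cH^{(k)}}$ so that the projector term is annihilated by $\bE_1^T(\cD^{(k+1)}\cU^{(k+1)})^{-1}$ on the left. Your explicit expansion of $\bldual[auto]{\bZ-\bW}{\bZ-\bW}$ to conclude $\bR_k=\bcV^{(k+1)}\bG_k$ from the two identities is a correct, block-safe replacement for the scalar ``$\cos=1$'' argument that the paper's block proof leaves implicit, but it does not change the substance of the argument.
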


%%%%%%%%%% PROOF %%%%%%%%%%

\begin{proof}
	From \eqref{eq:first_row_form} it follows that 
	\begin{align*}
		\bldual[auto]{\bG_k}{\bG_k} 
		=
		\bldual[auto]{\bR_k}{\bR_k}.
	\end{align*}
	Therefore, to prove $\bR_k = \bcV ^{(k+1)}\bG_k$, it suffices to show
	that
	\begin{align*}
		{\bR_k}\bldual[auto]{\bcV ^{(k+1)}\bG_k} 
		=
		\bldual[auto]{\bR_k}{\bR_k}
		.
	\end{align*}
	Using 
	\begin{align*}
		\bR_k 
		=
		\bcV ^{(k+1)}\left(\cI_{k+1} - \underline{\cH }^{(k)}\left(\underline{\cH }^{(k)}\right)^\dagger\right)\bE_1\blnorm{\bR_0}
	\end{align*}
	and
	\begin{align*}
		\underline{\cH }^{(k)} 
		=
		\cD ^{(k+1)}\cU ^{(k+1)}
		\begin{bmatrix}
			\bnull\\ \cI_k
		\end{bmatrix}
		\left(\cD ^{(k)}\cU ^{(k)}\right)^{-1}
	\end{align*}
	along with the first  line of \eqref{eq:first_row_form_block} (since $F_0 = \blnorm{\bR_0}$)
	we obtain
	\begin{align*}
		\bldual[auto]{\bR_k}{\bcV ^{(k+1)}\bG_k} 
		=& 
		\bldual[auto]{\bR_k}{\bR_k}
		\prn{
			\bE_1^T\left(\cD ^{(k+1)}\cU ^{(k+1)}\right)^{-1}\bE_1\blnorm{\bR_0} - \bnull
		}
		\\
		=&
		 \bldual[auto]{\bR_k}{\bR_k}.
	\end{align*}
	
\end{proof}

%%%%% NEW SECTION %%%%%

\subsection{Generating the prescribed block Hessenberg matrices}\label{section:generating-block-Hess}
As with restarted \gmres, we characterize the case with no end-of-cycle stagnation first and build upon that to accommodate stagnation.

%%%%% NEW SECTION %%%%%

\subsubsection{No end-of-cycle stagnation in any direction}
If there is no end-of-cycle stagnation with respect to any direction, we can
generalize the results of \Cref{theorem:restart-gmres-nostag-assignment} to
allow for assigning arbitrary admissible residual convergence for each cycle while
specifying eigenvalues and Ritz values independently, subject to the
restrictions imposed by the assignment of stagnation at a particular iteration.
If a stagnation is assigned at some iteration with respect to any direction, it follows 
that at least one Ritz value must be zero at that iteration \cite{Soodhalter:2017:1}.

We verify that the admissible convergence for restarted \gmres with no end-of-cycle stagnation generalizes to the restarted \blgmres.
\begin{lemma}
	Let $F,G\in\S^+$ be two normalizing quantities.  Then $F$ and $G$ are 
	strictly ordered (\ie $F<G$ or $G<F$)
	if and only if there is no direction $\bu$ such that
	\begin{align}
		\bu^\ast F^\ast F\bu = \bu^\ast G^\ast G\bu.
		\label{eqn:u-test-equal}
	\end{align}
\end{lemma}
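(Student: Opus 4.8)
The plan is to reduce the statement to a standard fact about Hermitian quadratic forms. I would set $P := G^\ast G - F^\ast F$, which is Hermitian. Adopting the convention (consistent with the $\leq$ ordering recalled above) that $F < G$ means $F^\ast F \prec G^\ast G$ in the strict Loewner sense, the assertion ``$F<G$ or $G<F$'' says exactly that $P$ is either positive definite or negative definite. On the other side, $\bu^\ast F^\ast F\bu = \bu^\ast G^\ast G\bu$ holds precisely when $\bu^\ast P\bu = 0$, so ``there is no direction $\bu$ with \eqref{eqn:u-test-equal}'' says that $\bu \mapsto \bu^\ast P\bu$ vanishes at no point of $\C^p\setminus\{\bnull\}$. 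Thus it suffices to prove: a Hermitian $P\in\C^{p\times p}$ is (positive or negative) definite if and only if $\bu^\ast P\bu \neq 0$ for every $\bu \in \C^p\setminus\{\bnull\}$.

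For the forward direction I would simply invoke the definition of definiteness: if $P \succ 0$ then $\bu^\ast P\bu > 0$ for all nonzero $\bu$, and if $P \prec 0$ then $\bu^\ast P\bu < 0$, so in either case the form never vanishes. For the converse I would argue by contraposition using the spectral theorem. Write $P = \sum_{i=1}^p \lambda_i\, \bq_i\bq_i^\ast$ with $\{\bq_i\}$ an orthonormal eigenbasis and $\lambda_i \in \R$. If $P$ is not definite, then either some $\lambda_i = 0$, in which case $\bq_i \neq \bnull$ is isotropic; or $P$ has eigenvalues $\lambda_i > 0$ and $\lambda_j < 0$ of opposite sign, in which case I set $\bu := \sqrt{-\lambda_j}\,\bq_i + \sqrt{\lambda_i}\,\bq_j$. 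Since $\bq_i \perp \bq_j$, the cross terms drop and $\bu^\ast P\bu = (-\lambda_j)\lambda_i + \lambda_i\lambda_j = 0$, while $\norm{\bu}^2 = -\lambda_j + \lambda_i > 0$ so $\bu \neq \bnull$. Either branch exhibits a direction satisfying \eqref{eqn:u-test-equal}, which is the contrapositive of the desired implication.

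The argument is elementary, and I expect no genuine obstacle; the only points needing care are (i) fixing the meaning of the strict relation $F < G$ as strict Loewner inequality of the Gram matrices (note that interpreting it merely as $F^\ast F \preceq G^\ast G$ with $F\neq G$ would make the statement false, since the kernel of a semidefinite-but-not-definite $P$ would furnish an isotropic direction), and (ii) observing that the case $F = G$ (equivalently $P = 0$, by uniqueness of the Cholesky factor) is not an exception, as then both sides of the equivalence are false. The lemma is worth recording because it confirms that the $\bu^\ast\bldual{\bR}{\bR}\bu$-wise comparison is precisely the right notion of strict progress, matching the block stagnation–mirroring statement of \Cref{proposition:block-stagnation-in-a-direction}.
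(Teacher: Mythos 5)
Your proof is correct, and its key step goes by a genuinely different route than the paper's. For the converse (no isotropic direction $\Rightarrow$ definiteness of $P = G^\ast G - F^\ast F$), the paper argues by contradiction with a continuity argument: assuming $P$ indefinite, it takes vectors $\bu_1,\bu_2$ with $\bu_1^\ast P\bu_1>0$ and $\bu_2^\ast P\bu_2<0$, connects them by the segment $\bv(t)=\bu_1(1-t)+\bu_2 t$, and applies the intermediate value theorem to $t\mapsto \bv(t)^\ast P\bv(t)$ to produce a direction where the form vanishes; the possibility that $P$ is singular is dispatched separately (and somewhat loosely, via a remark that $F^\ast F$ and $G^\ast G$ cannot share a null space). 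You instead argue by contraposition through the spectral theorem, exhibiting the isotropic direction explicitly: an eigenvector for a zero eigenvalue, or $\bu=\sqrt{-\lambda_j}\,\bq_i+\sqrt{\lambda_i}\,\bq_j$ when eigenvalues of opposite sign exist. Your route is constructive, and it treats the semidefinite-but-singular case uniformly with the indefinite one, which is a small tightening over the paper's handling; the paper's IVT argument, in exchange, needs no eigendecomposition and works verbatim for any continuous real-valued quadratic form. Your clarifying remarks are also well taken: reading $F<G$ as strict Loewner inequality $F^\ast F\prec G^\ast G$ is indeed the intended meaning (the paper's own forward direction has the sign of the difference written inconsistently, but the intent matches yours), and noting that $F=G$, equivalently $P=0$ by uniqueness of the Cholesky factor in $\S^+$, makes both sides of the equivalence false rather than breaking it.
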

\begin{proof}
	This result is a direct consequence of the fact that we are assuming
	that $F^\ast F-G^\ast G$ is definite, from which it immediately follows
	that \eqref{eqn:u-test-equal} cannot hold.
%	We prove the first direction \Wlog for the case $F<G$.  Suppose $F<G$
%	and there exists such a direction $\bu$. Then it would hold that
%	$\bu^\ast\prn{F^\ast F - G^\ast G}\bu=0$.  This would imply that
%	$F^\ast F - G^\ast G$ is not \hpd, contradicting the assumption on
%	$F<G$.
%
%	For the other direction, we first note that by assumption $F^\ast F$
%	and $G^\ast G$ are both \hpd  This means that the case of them sharing
%	a non-trivial null space can be ruled out.  Suppose that there exists
%	no $\bu$ such that \eqref{eqn:u-test-equal} holds.  Let us then assume
%	that $F^\ast F-G^\ast G$ is indefinite.  This implies that there exists
%	two vectors $\bu_1$ and $\bu_2$ such that
%	\begin{align*}
%		\bu_1^\ast\prn{F^\ast F-G^\ast G}\bu_1
%		>&
%		0
%		\quad
%		\mbox{and}
%		\\
%		\bu_2^\ast\prn{F^\ast F-G^\ast G}\bu_1
%		<&
%		0	
%	\end{align*}
%	Let $\bv(t)\coloneq \bu_1(1-t) + \bu_2 t$ for $t\in[0,1]$.  The
%	function $f(t)\coloneq \bv(t)^\ast\prn{F^\ast F-G^\ast G}\bv(t)$ is
%	continuous; $f(0)>0$, and $f(1)<0$.  By the intermediate value theorem,
%	it follows that there exists $\hat{t}\in(0,1)$ such that
%	$f(\hat{t}\,)=0$.  However, this implies that $v(\hat{t}\,)$ satisfies
%	\eqref{eqn:u-test-equal}.  This is a contradiction and thus $F^\ast F -
%	G^\ast G$ is either positive or negative definite.   
\end{proof}
\begin{theorem}\label{theorem:restart-blgmres-nostag-assignment}
	Let $\curly{\curly{F_j^{(k)}}_{j=0}^{M-1}}_{k=1}^{\ell}$ be a sequence of positive normalizing quantities (\ie, $F_j^{(k)}\in\S^+$ for all $j,k$) satisfying
	\begin{align*}
		F_0^{(1)}
		\geq
		F_1^{(1)}
		\geq
		\cdots 
		\geq
		F_{M-1}^{(1)}
		>
		F_{0}^{(2)}
		\geq
		\cdots 
		\geq
		F_M^{(\ell-1)}
		> 
		F_0^{(\ell)}
		\geq
		\cdots 
		\geq
		F_{M-1}^{(\ell)}>0,
	\end{align*}
	with strictly decreasing block residual norms at the transitions
	between restart cycles.  Then there
	exists a matrix $A$ and a right-hand side $\bB$ such that restarted
	\blgmres with cycle length $M$ applied to \eqref{eqn:AXB} yields
	residuals satisfying $\blnorm{\bR_j^{(k)}}=F_j^{(k)}$ for all
	$j=0,1,\ldots, M-1$ and $k=0,1,\ldots, M-1$. 
\end{theorem}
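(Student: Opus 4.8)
The plan is to transcribe the proof of \Cref{theorem:restart-gmres-nostag-assignment} into the $\ast$-algebra setting of \cite{KubinovaSoodhalter:2020:1}, replacing scalars by elements of $\S=\C^{p\times p}$, vectors by block vectors, and invoking \Cref{theorem:block-gmres-specification} in place of the per-cycle \gmres specification result. As in the scalar case it suffices, by unitary invariance of \blgmres within each cycle, to build a block upper-Hessenberg matrix together with a right-hand side parallel to $\bE_1$; the version with arbitrary block Arnoldi bases then follows by a block change of basis exactly as in \Cref{rem:gen_arnoldi_vec}. The first step is to observe that the strict transitions $F_{M-1}^{(k)}>F_0^{(k+1)}$, read in the Loewner order, preclude every end-of-cycle partial stagnation: by the lemma preceding this theorem there is no direction $\bu$ with $\bu^\ast (F_{M-1}^{(k)})^\ast F_{M-1}^{(k)}\bu = \bu^\ast (F_0^{(k+1)})^\ast F_0^{(k+1)}\bu$. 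Hence \Cref{theorem:block-gmres-specification} produces, for each $k=1,\dots,\ell$, a block upper-Hessenberg matrix
\[
	\cH^{(k)} = \cD^{(k)}\cU^{(k)}\cC^{(k)}\prn{\cD^{(k)}\cU^{(k)}}^{-1}
\]
of block size $M+1$ for which \blgmres applied to $(\cH^{(k)},\bE_1 F_0^{(k)})$ yields the normalizing-quantity sequence $F_0^{(k)}\ge\cdots\ge F_{M-1}^{(k)}>F_0^{(k+1)}$, with, if desired, prescribed eigenvalues and Ritz values; for $k=\ell$ one chooses any admissible $F_0^{(\ell+1)}$ strictly below $F_{M-1}^{(\ell)}$ as a scaffold and keeps only the leading $Mp\times Mp$ block of $\underline{\cH^{(\ell)}}$. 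Write $\underline{\cH^{(k)}}$ for $\cH^{(k)}$ with its last block column removed.

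Next I would assemble the restarted block Arnoldi basis as in the scalar proof: put $\bcV^{(1)}:=\begin{bmatrix}\bE_1&\cdots&\bE_M\end{bmatrix}$ and, for $k\ge 1$,
\[
	\bcV^{(k+1)} := \begin{bmatrix}\bcV^{(k)}&\bE_{kM+1}&\cdots&\bE_{(k+1)M}\end{bmatrix}\begin{bmatrix}\bG^{(k)}\blnorm{\bG^{(k)}}^{-1}&0\\&\bcI_{M-1}\end{bmatrix},
\]
with $\bG^{(k)}=\brak{\bE_1^T(\cD^{(k)}\cU^{(k)})^{-1}}^\ast\bldual{F_0^{(k+1)}}{F_0^{(k+1)}}$ the block residual-coefficient quantity of \eqref{eqn:kth-block-residual-coefficients}. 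The identity $\bldual{\bG^{(k)}}{\bG^{(k)}}=\bldual{F_0^{(k+1)}}{F_0^{(k+1)}}$ established in the proof of \Cref{lemma:block-residual-coefficients} shows $\bG^{(k)}$ has full column rank (so $\blnorm{\bG^{(k)}}$ is invertible) and that every $\bcV^{(k)}$ has orthonormal columns. The block analogue of the scalar fact $\be_{m+1}^T\bg^{(k)}\ne 0$ is that the $(M+1)$st block entry of $\bG^{(k)}$ is nonsingular: by the first-row formula recalled before \Cref{lemma:block-residual-coefficients} this block is, up to invertible left and right factors, a square root of $\bldual{F_0^{(k+1)}}{F_0^{(k+1)}}^{-1}-\bldual{F_{M-1}^{(k)}}{F_{M-1}^{(k)}}^{-1}$, and $F_0^{(k+1)}<F_{M-1}^{(k)}$ makes this difference \hpd, hence the square root and therefore the block itself nonsingular. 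Consequently the stacked matrix $\widetilde{\bcV}:=\begin{bmatrix}\bcV^{(1)}&\cdots&\bcV^{(\ell)}\end{bmatrix}$ has full column rank $\ell M p = n$, so its columns form a basis of $\S^N$.

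The construction is completed exactly as before: set $\bB:=\bE_1 F_0^{(1)}$ and define $A$ via its action on this basis,
\[
	A\bcV^{(k)}=\begin{bmatrix}\bcV^{(k)}&\bE_{kM+1}\end{bmatrix}\underline{\cH^{(k)}}\ \ (k=1,\dots,\ell-1),\qquad A\bcV^{(\ell)}=\bcV^{(\ell)}\cH^{(\ell)}_{M\times M},
\]
where $\cH^{(\ell)}_{M\times M}$ is the leading $Mp\times Mp$ block of $\underline{\cH^{(\ell)}}$; invertibility of $\widetilde{\bcV}$ makes $A$ well defined. Since $\begin{bmatrix}\bcV^{(k)}&\bE_{kM+1}\end{bmatrix}$ has orthonormal columns, cycle $k$ of restarted \blgmres on $(A,\bB)$ is the unitary image of \blgmres on $(\cH^{(k)},\bE_1 F_0^{(k)})$, hence $\blnorm{\bR_j^{(k)}}=F_j^{(k)}$ for all $j$; and \Cref{lemma:block-residual-coefficients} gives $\bcV^{(k+1)}\bE_1=\begin{bmatrix}\bcV^{(k)}&\bE_{kM+1}\end{bmatrix}\bG^{(k)}\blnorm{\bG^{(k)}}^{-1}=\bR_M^{(k)}\blnorm{\bR_M^{(k)}}^{-1}$, which threads the successive cycles together. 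The eigenvalue and Ritz-value statements then follow verbatim from the arguments behind \Cref{corollary:eigs_of_A,corollary:ritzvals_of_A}, with block companion and block triangular factors in place of the scalar ones.

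I expect the main obstacle to be the bookkeeping that converts ``no end-of-cycle partial stagnation'' into full column rank of $\widetilde{\bcV}$: one must check that the Loewner strict inequality $F_{M-1}^{(k)}>F_0^{(k+1)}$ forces the trailing block of $\bG^{(k)}$ to be nonsingular, and then that this nonsingularity contributes a full $Mp$ to the rank at each step of the induction. The lemma preceding this theorem, together with the first-row formula for $(\cD\cU)^{-1}$, is exactly what closes this gap; the remaining verifications (orthonormality of the $\bcV^{(k)}$, well-definedness of $A$, and the per-cycle unitary equivalence) are direct transcriptions of the scalar arguments.
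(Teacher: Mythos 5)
Your proposal follows essentially the same route as the paper's proof: per-cycle block upper-Hessenberg factorizations $\cH^{(k)}=\cD^{(k)}\cU^{(k)}\cC^{(k)}(\cD^{(k)}\cU^{(k)})^{-1}$ obtained from the block admissible-convergence result, standard-basis block Arnoldi vectors glued across cycles via $\bG^{(k)}$ from \Cref{lemma:block-residual-coefficients}, and $A$ defined by its action on the full-rank restarted basis $\widetilde{\bcV}$ with $\bB=\bE_1F_0^{(1)}$. Your added detail (the Loewner argument that the trailing block of $\bG^{(k)}$ is nonsingular, and the scaffold value for the last cycle) just makes explicit steps the paper treats briefly, so the argument is correct and matches the paper's approach.
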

\begin{proof}
	Following the proof of \Cref{theorem:restart-gmres-nostag-assignment},
	we can \Wlog consider the case of constructing an appropriate block
	Hessenberg matrix with starting vector being the first block standard
	basis vector $\bE_1\in\S^N$ and then generalize by applying a basis
	transformation, as for non-block restarted \gmres. This works because
	for any block Hessenberg matrix $\cH\in\S^{N\times N}$, the $i$th
	block standard basis vector $\bE_i$ ($1\leq i\leq N-1$) satisfies 
	\begin{align}
		\cH\bE_i 
		\in
		\blspan{
			\braces{
				\bE_1,
				\bE_2,
				\ldots,
				\bE_i,
				\bE_{i+1}
			}
		}
		.
		\label{eqn:block-hessenberg-arnoldi-vectors}
	\end{align}
	This means that the (non-restarted) block Arnoldi basis for $\cH$
	generated by $\bE_1$ is the block standard basis vectors.

	Since we enforce $F_{M-1}^{(j)} > F_0^{(j+1)}>0$ for each cycle $j$, we use the theory from \cite{KubinovaSoodhalter:2020:1} to construct the block upper-Hessenberg matrix for each cycle,
	\begin{align*}
		\cH^{(k)}
		=
		\cD^{(k)}\cU^{(k)}\cC^{(k)}\left(\cD^{(k)}\cU^{(k)}\right)^{-1}\in\S^{(M+1)\times (M+1)}, 
		\quad 
		k
		=
		1,\ldots,\ell,
	\end{align*}
	such that \blgmres applied to $(\cH^{(k)},\bE_{k\ell+1} F_0^{(k)})$
	yields the prescribed residual convergence for the $k$th cycle, with
	the prescribed Ritz values at each iteration in the cycle and with
	$\cH^{(k)}$ having the prescribed solvents.  It follows from
	\eqref{eqn:block-hessenberg-arnoldi-vectors} that the block Arnoldi
	vectors can be shown to be subsets of standard basis vectors of $\S^N$;
	\ie
	\begin{align*}
		\bcV^{(1)}
		:=&
		\begin{bmatrix}
			\bE_1
			&
			\bE_2
			&
			\cdots
			&
			\bE_M
		\end{bmatrix}
		\\
		\bcV^{(k+1)}
		:=&
		\begin{bmatrix}
			\bcV^{(k)}
			&
			\bE_{kM+2}
			&
			\bE_{kM+3}
			&
			\cdots\bE_{(k+1)M}
		\end{bmatrix}
		\begin{bmatrix}
			\bG^{(k)}\blnorm{\bG^{(k)}}
			&
			\\
			&
			I_{M-1}
		\end{bmatrix},
		\\
		\mbox{for}
		\quad
		k
		=&
		1,2,\ldots, \ell-1,
	\end{align*}
	where $\bG^{(k)}=\brak{\bE_1^T\left(\bcD ^{(k+1)}\bcU
	^{(k+1)}\right)^{-1}}^*\bldual[auto]{F_0^{(k+1)}}{F_0^{(k+1)}}$, from
	\eqref{eqn:kth-block-residual-coefficients} and that the final block
	residual in cycle $k$,
	$\bR_M^{(k)}\in\blspan\braces{\bE_1,\ldots,\bE_{k\ell+1}}$.  From
	\eqref{eqn:block-hessenberg-arnoldi-vectors}, this means
	$\cH\bR_M^{(k)}\in\blspan\braces{\bE_1,\ldots,\bE_{k\ell+1},\bE_{k\ell+2}}$,
	for any block Hessenberg matrix $\bH$. Thus the first new Arnoldi
	vector generated after an iteration of block Arnoldi must be
	$\bE_{k\ell+2}$. The matrices $\bcV^{(k)}$ have orthonormal columns for
	all $k$, and since there is no end-of-cycle stagnation, the matrix 
	\begin{math}
		\widetilde{\bcV}
		=
		\begin{bmatrix}
			\bcV^{(1)}
			&
			\bcV^{(2)}
			&
			\cdots
			&
			\bcV^{(\ell)}
		\end{bmatrix}
	\end{math}
	has full rank.  

	We set $\bB = \bE_1 F_0^{(1)}$ and define $A$ via its action on a set of linearly independent vectors; \ie,
	\begin{align}
		A\bcV^{(k)}
		=&
		\begin{bmatrix}
			\bcV^{(k)}
			&
			\bE_{kM+1}
		\end{bmatrix}
		\underline{\cH^{(k)}}
		\quad
		\mbox{for}
		\quad
		k-1=
		1, 2,\ldots, \ell-1
		\nonumber
		\\
		A\bcV^{(\ell)}
		=&
		\begin{bmatrix}
			\bcV^{(\ell)}
			&
			\bnull
		\end{bmatrix}
		\underline{
			\cH
		}^{(\ell)}
		=
		\bcV^{(\ell)}
		\begin{bmatrix}
			\cI_M
			&
			\bnull
		\end{bmatrix}
		=
		\underline{
			\cH
		}^{(\ell)}
		\bcV^{(\ell)}\cH^{(\ell)}_{M\times M}.
		\label{eqn:block-cycle-arnoldi-relations}
	\end{align}
	where $\cH^{(\ell)}_{M\times M}$ contains the first $M$ block rows of $\underline{\cH}^{(\ell)}$.
	The construction of the Hessenberg matrices and of the action of $A$ yields the prescribed block residual norm convergence sequence.  Furthermore, the cycles are further tied together via \eqref{eqn:kth-block-residual-coefficients},
	\begin{align}
		\bcV^{(k+1)}\bE_1
		=
		\begin{bmatrix}
			\bcV^{(k)}
			&
			\bE_{kM+1}
		\end{bmatrix}
		\bG^{(k)}\blnorm{\bG^{(k)}}^{-1}
		=
		\bR^{(k)}_M\blnorm{\bR^{(k)}_M}^{-1}.
		\label{eqn:bl-arnoldi-cycles-tied-together}
	\end{align}
\end{proof}

We note that as in the case of restarted \gmres, we can rewrite \eqref{eqn:block-cycle-arnoldi-relations} as 
\begin{align}
	A\bcV^{(k)}
	=
	\begin{bmatrix}
		\bcV^{(k)}
		&
		\bcV^{(k+1)}\bE_1
	\end{bmatrix}
	\underbrace{
	\begin{bmatrix}
		\begin{matrix}
			\cI_m
			\\
			\bnull
		\end{matrix}
		&
		\bG^{(k)}\blnorm[auto]{\bG^{(k)}}^{-1}
	\end{bmatrix}
	\underline{\cH^{(k)}}
	}_{\eqcolon \underline{\widetilde{\cH}^{(k)}}},
\end{align}
allowing us to define $A$ fully by its action on all the restarted block Arnoldi vectors 
\begin{align}
	A\widetilde{\bcV}
	=
	\widetilde{\bcV}
	\underbrace{
		\begin{bmatrix}
			\underline{\widetilde{\cH}^{(1)}}
			\\
			&
			\underline{\widetilde{\cH}^{(2)}}
			\\
			&
			&
			\ddots
			\\
			&
			&
			&
			\underline{\widetilde{\cH}^{(\ell-1)}}
			\\
			&
			&
			&
			&
			\cH^{(\ell)}_{M\times M}
		\end{bmatrix}
	}_
	{
		=: 
		\widetilde{\cH}
	}
	\label{eqn:full-block-restarted-Arnoldi-relation}
\end{align}
This construction allows us to see, as in
\Cref{corollary:eigs_of_A,corollary:ritzvals_of_A}, that the eigenvalues and
Ritz values for each iteration of each cycle are determined by the block
Hessenberg blocks of $\widetilde{\cH}$ and can thus all be assigned,
independent of the block residual normalizing quantity behavior.  The proofs
follow those of the single-vector case and are thus omitted.
\begin{corollary}
	The eigenvalues and Ritz values for each individual block Hessenberg
	matrix for each cycle can be assigned arbitrarily, unrelated to the
	sequence of block residual normalizing quantities, in the case that
	there is no stagnation \wrt to any direction specified.  In the case
	that a stagnation is specified at iteration $j$ of cycle $k$, then at
	least one Ritz value of $\cH_j^{(k)}$ must be zero. 
\end{corollary}

Lastly, we observe that as with the single-vector restarted \gmres case, for
each cycle of restarted \blgmres, we can assign the block Arnoldi vector for
each cycle, subject to the constraint that the cycles are tied together as in
\eqref{eqn:bl-arnoldi-cycles-tied-together}.

\begin{corollary}
	The results from \Cref{theorem:restart-blgmres-nostag-assignment} are
	valid for any basis of mutually independent sets of block vectors,
	whereby for each $k$, $\bcW^{(k)}$ has orthonormal columns and for each
	$k$, their first columns satisfy
	\begin{align*}
		\bcW^{(k+1)}\bE_1
		=
		\begin{bmatrix}
			\bcW^{(k)}
			&
			\widetilde{\bW}_k
		\end{bmatrix}
		\bG^{(k)}\blnorm[auto]{\bG^{(k)}}^{-1},
	\end{align*}
	where $\widetilde{\bW}_k$ is any normalized block vector orthogonal to
	the columns of $\bcW^{(k)}$.
\end{corollary}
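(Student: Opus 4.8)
The plan is to follow the proof of \Cref{theorem:restart-blgmres-nostag-assignment} essentially verbatim --- exactly as \Cref{rem:gen_arnoldi_vec} relates to \Cref{theorem:restart-gmres-nostag-assignment} in the single-vector case --- replacing the standard block basis vectors $\bE_{kM+1}$ throughout by the prescribed vectors $\widetilde{\bV}_k$, and justifying the substitution by the unitary invariance of \blgmres \emph{within a single cycle}.  The key point is that the behavior of cycle $k$ --- the block residual normalizing quantities $F_j^{(k)}$, the Ritz solvents (hence Ritz values) at each block Arnoldi step, and the solvents of the cycle's block Hessenberg matrix --- is entirely encoded in the pair consisting of $\cH^{(k)}$ and the starting block vector $\bcV^{(k)}\bE_1$, via \Cref{theorem:block-gmres-specification}; it does not depend on which orthonormal block vectors realize $\cH^{(k)}$.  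Equivalently, unitary similarity sends block Krylov subspaces to their images, preserves the block inner product and hence the economy block QR normalizing quantity of any block vector, and leaves $\cH^{(k)}$ fixed, so that the Frobenius-norm (equivalently Loewner-sense) minimization of \Cref{theorem:block-polynomial-minimization-blgmres} is unchanged.

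Concretely, I would take the block Hessenberg matrices $\cH^{(k)}=\cD^{(k)}\cU^{(k)}\cC^{(k)}(\cD^{(k)}\cU^{(k)})^{-1}$ supplied by \cite{KubinovaSoodhalter:2020:1}, encoding the prescribed $\{F_j^{(k)}\}_j$, Ritz values and solvents for cycle $k$; designate $\widetilde{\bV}_k$ as the $(M+1)$st block Arnoldi vector of cycle $k$; and, given a collection $\{\bcV^{(k)}\}_{k=1}^{\ell}$ as in the statement, define $A$ through $A\bcV^{(k)}=\begin{bmatrix}\bcV^{(k)}&\widetilde{\bV}_k\end{bmatrix}\underline{\cH^{(k)}}$ for $k<\ell$ and $A\bcV^{(\ell)}=\bcV^{(\ell)}\cH^{(\ell)}_{M\times M}$.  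Since $\widetilde{\bcV}=\begin{bmatrix}\bcV^{(1)}&\cdots&\bcV^{(\ell)}\end{bmatrix}$ is invertible, this is precisely the single relation $A\widetilde{\bcV}=\widetilde{\bcV}\widetilde{\cH}$ of \eqref{eqn:full-block-restarted-Arnoldi-relation} and thus defines $A$ unambiguously; one sets $\bB=\bcV^{(1)}\bE_1F_0^{(1)}$.  It then remains to check, using the standing full-column-rank assumption so that no block Arnoldi breakdown occurs, that the cycle-$k$ block Arnoldi process applied to $A$ with start $\bcV^{(k)}\bE_1$ reproduces $\bcV^{(k)}$ and $\cH^{(k)}$ --- so cycle $k$ exhibits the prescribed residual normalizing quantities, Ritz values and solvents --- and that \Cref{lemma:block-residual-coefficients}, together with $\blnorm[auto]{\bR_M^{(k)}}=\blnorm[auto]{\bG^{(k)}}$, turns the hypothesized coupling $\bcV^{(k+1)}\bE_1=\begin{bmatrix}\bcV^{(k)}&\widetilde{\bV}_k\end{bmatrix}\bG^{(k)}\blnorm[auto]{\bG^{(k)}}^{-1}$ into the chaining identity $\bcV^{(k+1)}\bE_1=\bR_M^{(k)}\blnorm[auto]{\bR_M^{(k)}}^{-1}$, which makes cycle $k+1$ of restarted \blgmres on $(A,\bB)$ begin exactly where cycle $k$ ended.

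The main obstacle is the bookkeeping that guarantees the construction is well-posed, rather than any deep difficulty.  One must verify that the hypotheses are mutually consistent --- each $\bcV^{(k)}$ with orthonormal columns, $\widetilde{\bV}_k$ normalized and orthogonal to the columns of $\bcV^{(k)}$, the coupling relation, and mutual linear independence of the whole collection --- and, in particular, that the trailing $p\times p$ block of $\bG^{(k)}$ is nonsingular under the no-end-of-cycle-stagnation hypothesis, which is what forces $\widetilde{\bV}_k$ to contribute a genuinely new block direction, exactly as in the proof of \Cref{theorem:restart-blgmres-nostag-assignment}.  A secondary point, inherited from the single-vector case (cf.\ the surrounding remarks), is that the final cycle carries the implicit convention $\bu=\bnull$ --- that is, $A\bcV^{(\ell)}=\bcV^{(\ell)}\cH^{(\ell)}_{M\times M}$ rather than the more general \eqref{eq:final-arnoldi-next-vector} --- which keeps $\widetilde{\cH}$ block lower triangular and the eigenvalue/Ritz data unambiguous but leaves $\blnorm[auto]{\bR_M^{(\ell)}}$ unspecified; the corollary is to be read as inheriting that convention.
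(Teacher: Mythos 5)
Your proposal is correct and takes essentially the same route the paper intends: the paper leaves this corollary without an explicit proof, relying (as in the single-vector case of \Cref{rem:gen_arnoldi_vec}) on the per-cycle unitary invariance of \blgmres and on rerunning the construction of \Cref{theorem:restart-blgmres-nostag-assignment} with the standard block basis vectors replaced by the prescribed orthonormal sets, tied together through the coupling relation \eqref{eqn:bl-arnoldi-cycles-tied-together}. Your additional bookkeeping (invertibility of $\widetilde{\bcV}$, nonsingularity of the trailing block of $\bG^{(k)}$ under no end-of-cycle stagnation, and the implicit $\bU=\bnull$ convention in the final cycle) is exactly the content the paper takes as understood.
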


\begin{proof}
	Consider that we have prescribed the restarted \blgmres behavior for a
	block upper-Hessenberg system of the form 
	\begin{align*}
		\widetilde{\cH}\bX 
		=
		\bE_1F_0^{(1)}
		.
	\end{align*}
	Consider
	\begin{math}
		\widetilde{\bcW}
		=
		\begin{bmatrix}
			\bcW^{(1)}
			&
			\bcW^{(2)}
			&
			\cdots
			&
			\bcW^{(\ell)}
		\end{bmatrix}
		\in 
		\S^{N\times N}
	\end{math}
	wherein each $\bcW^{(i)}\in\S^{N\times M}$ has orthonormal 
	columns but $\bcW^{(i)}$ and $\bcW^{(j)}$ do not necessarily 
	have mutually orthogonal columns for $i\neq j$, such that 
	$\widetilde{\bcW}$ is invertible.

	Setting $A = \widetilde{\bcW}\widetilde{\cH}\widetilde{\bcW}^{-1}$, 
	we observe that 
	\begin{align*}
		A\widetilde{\bcW}
		=
		\widetilde{\bcW}\widetilde{\cH}\widetilde{\bcW}^{-1}
		\widetilde{\bcW}
		=
		\widetilde{\bcW}\widetilde{\cH}
		.
	\end{align*}
	Thus, $\widetilde{\bcW}$ is a restarted Arnoldi basis for $A$.  Furthermore,
	\begin{align*}
		\widetilde{\cH}\bX 
		=&
		\bE_1F_0^{(1)}	
		\\
		\iff
		\widetilde{\bcW}
		\widetilde{\cH}
		\widetilde{\bcW}^{-1}
		\widetilde{\bcW}
		\bX 
		=&
		\widetilde{\bcW}\bE_1F_0^{(1)}
		\\
		\iff 
		A \widehat{\bX}
		=& 
		\bV_1^{(1)}
		F_0^{(1)}
		,
	\end{align*}
	where $\widehat{\bX} = \widetilde{\bcW}\bX$. Thus, for an arbitrarily
	chosen restarted block Krylov basis $\widetilde{\bcW}$, producing the
	assigned restarted \blgmres behavior, with the assigned eigenvalues and
	Ritz values.
\end{proof}

Directly analogous to the case in single-vector restarted \gmres, there is the
same implicit choice made in the last iteration that allows us to assign all
the eigenvalues of $A$ and the Ritz values at each iteration of \blgmres.  A
more general expression of the block Arnoldi iteration at the last cycle would
be 
\begin{align*}
	A\bcV^{(\ell)}
	=
	\begin{bmatrix}
		\bcV^{(\ell)}
		&
		\bU
	\end{bmatrix}
	\underline{\cH^{(\ell)}},
\end{align*}
but with $\bU=\widetilde{\bcV}\widehat{\bC}$ being a linear combination of the basis of restarted block Arnoldi vectors with $\widehat{\bC}\in\S^N$ being an arbitrarily chosen vector.  This vector can be chosen freely to influence the block residual normalizing quantity $\bR_M^{(\ell)}$ at the expense of no longer being able to exactly specify the eigenvalues of $A$ or the Ritz values of each cycle. The choice $\widehat{\bC}=\bnull$ corresponds to the choice allowing them to be specified unambiguously.  
\begin{lemma}
	Let $\bU\neq \bnull$.  This corresponds to the rank-one update of the full block restarted Arnoldi relation \eqref{eqn:full-block-restarted-Arnoldi-relation}
	\begin{align}
		A\widetilde{\bcV}
		=
		\widetilde{\bcV}
		\underbrace{
			\prn{
				\widetilde{\cH} + \bC\bE_N^T
			}
		}_{\eqcolon \widetilde{\cH}\prn{\bC}}
	\end{align}
	where $\bC = \widehat{\bC}H_{M+1,M}^{(\ell)}$, and $H_{M+1,M}^{(\ell)}$
	is the $(M+1,M)$th block entry of $\underline{\cH^{(\ell)}}$.
\end{lemma}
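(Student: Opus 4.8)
The plan is to follow the proof of \Cref{lemma:rank-one-update-Hessenberg-laststep} step for step, translating each manipulation into the right $\S$-module setting of \cite{KubinovaSoodhalter:2020:1} and keeping careful track of the side from which the block ``scalars'' act. As in the construction underlying \Cref{theorem:restart-blgmres-nostag-assignment}, the restarted block Arnoldi relations for cycles $k=1,\dots,\ell-1$ in \eqref{eqn:block-cycle-arnoldi-relations} are built independently of the final cycle and are unaffected by the choice of $\bU$; hence $\widetilde{\cH}(\bC)$ can differ from $\widetilde{\cH}$ in \eqref{eqn:full-block-restarted-Arnoldi-relation} only in the block column corresponding to the last block Arnoldi step of cycle $\ell$, which is precisely the column selected by $\bE_N^T$. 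So it suffices to recompute $A\bcV^{(\ell)}$ starting from the general final-cycle relation $A\bcV^{(\ell)}=\begin{bmatrix}\bcV^{(\ell)} & \bU\end{bmatrix}\underline{\cH^{(\ell)}}$ with $\bU=\widetilde{\bcV}\widehat{\bC}$ substituted in, and then splice the result back into the full relation.

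First I would express $\begin{bmatrix}\bcV^{(\ell)} & \bU\end{bmatrix}$ in the restarted block Arnoldi basis: since $\bcV^{(\ell)}$ occupies the last $M$ block columns of $\widetilde{\bcV}$ and $\bU=\widetilde{\bcV}\widehat{\bC}$ with $\widehat{\bC}\in\S^N$, one gets $\begin{bmatrix}\bcV^{(\ell)} & \bU\end{bmatrix}=\widetilde{\bcV}\left(\begin{bmatrix}\bnull_{(N-M)\times M} & \bnull \\ \cI_M & \bnull\end{bmatrix}+\widehat{\bC}\bE_{M+1}^T\right)$. Right-multiplying by $\underline{\cH^{(\ell)}}\in\S^{(M+1)\times M}$ and distributing, the first term collapses to $\begin{bmatrix}\bnull_{(N-M)\times M}\\ \cH^{(\ell)}_{M\times M}\end{bmatrix}$, exactly as in the $\widehat{\bC}=\bnull$ case already recorded in \eqref{eqn:block-cycle-arnoldi-relations}. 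The second term is $\widehat{\bC}\left(\bE_{M+1}^T\underline{\cH^{(\ell)}}\right)$, and the key observation is that $\underline{\cH^{(\ell)}}$ is block upper Hessenberg, so its $(M+1)$st block row has its only nonzero block entry in block column $M$; that is, $\bE_{M+1}^T\underline{\cH^{(\ell)}}=H_{M+1,M}^{(\ell)}\bE_M^T$ in the local block indexing of cycle $\ell$. Because the block entries of $\cH$ multiply the $\widetilde{\bcV}$-coefficients from the right, this reads $\widehat{\bC}\left(H_{M+1,M}^{(\ell)}\bE_M^T\right)=\left(\widehat{\bC}H_{M+1,M}^{(\ell)}\right)\bE_M^T$. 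Re-indexing into the global basis (cycle $\ell$ occupies global block columns $(\ell-1)M+1,\dots,\ell M=N$, so the local $\bE_M$ becomes the global $\bE_N$) and recombining with the untouched relations for cycles $1,\dots,\ell-1$ then gives $A\widetilde{\bcV}=\widetilde{\bcV}\left(\widetilde{\cH}+\bC\bE_N^T\right)$ with $\bC=\widehat{\bC}H_{M+1,M}^{(\ell)}$, as claimed.

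The only genuine point requiring care, rather than a real obstacle, is the bookkeeping of the multiplication side: in the single-vector case $h_{m+1,m}^{(\ell)}$ is a scalar and commutes past $\widehat{\bc}$, whereas here $H_{M+1,M}^{(\ell)}\in\S$ must be kept multiplying $\widehat{\bC}$ from the right, so that $\bC=\widehat{\bC}H_{M+1,M}^{(\ell)}$ and not $H_{M+1,M}^{(\ell)}\widehat{\bC}$. Beyond that, I would double-check the block upper Hessenberg structure of $\underline{\cH^{(\ell)}}$ used above (the only subdiagonal block sits in position $(M+1,M)$), which follows from the block Arnoldi relation together with the standing no-block-breakdown assumption; the same assumption makes the normalizing quantity $H_{M+1,M}^{(\ell)}$ nonsingular, so that for $\bU\neq\bnull$ (equivalently $\widehat{\bC}\neq\bnull$, since $\widetilde{\bcV}$ has full rank) the update $\bC\bE_N^T$ genuinely has block rank one. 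Everything else is the same formal computation as in \Cref{lemma:rank-one-update-Hessenberg-laststep}.
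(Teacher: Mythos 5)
Your proposal is correct and follows essentially the same route as the paper, which simply declares the block proof analogous to that of \Cref{lemma:rank-one-update-Hessenberg-laststep}; you carry out exactly that analogous computation, and your care about keeping $H_{M+1,M}^{(\ell)}$ multiplying $\widehat{\bC}$ from the right is the one point where the block setting genuinely differs.
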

\begin{proof}
	The proof is analogous to that of \Cref{lemma:rank-one-update-Hessenberg-laststep}.
\end{proof}
\begin{corollary}
	Let $\bx_M^{(\ell)} = \widetilde{\bcV}\bY$, $\bY\in\S^N$, be the final
	restarted \blgmres approximation for the $\ell$th cycle.  Then the
	residual $\bR_M^{(\ell)}= \bB - A \bX_M^{(\ell)}$ admits the expression
	\begin{align*}
		\bR_M^{(\ell)}
		=
		\widetilde{\bcV}
		\prn{
			\bE_1\blnorm{\bB}
			-
			\widetilde{\cH}\bY
			-
			\bC Y_N
		}
	\end{align*}
	where $Y_N$ is the $N$th block component of $\bY$.
\end{corollary}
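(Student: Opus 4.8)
The plan is to follow the proof of \Cref{corollary:final-residual-expression} essentially verbatim, now using the rank-one updated block Arnoldi relation from the lemma immediately preceding this corollary. First I would write the residual from its definition, $\bR_M^{(\ell)} = \bB - A\bX_M^{(\ell)}$, and insert the two representations in the restarted block Arnoldi basis: the hypothesis gives $\bX_M^{(\ell)} = \widetilde{\bcV}\bY$, and for the right-hand side we use $\bB = \widetilde{\bcV}\bE_1\blnorm{\bB}$. This last identity is the only point requiring a word of justification: by construction $\bB = \bV_1\blnorm{\bB}$ is the economy QR-factorization of the right-hand side with $\blnorm{\bB}\in\S^+_0$, and $\widetilde{\bcV}\bE_1$ equals the first block column of $\bcV^{(1)}$, namely $\bV_1$; this remains valid after the basis transformation introduced in the corollaries following \Cref{theorem:restart-blgmres-nostag-assignment}.

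Next I would substitute the rank-one updated relation $A\widetilde{\bcV} = \widetilde{\bcV}(\widetilde{\cH} + \bC\bE_N^T)$ established in the lemma above and factor $\widetilde{\bcV}$ out on the left, which yields
\[
	\bR_M^{(\ell)}
	=
	\widetilde{\bcV}\prn{\bE_1\blnorm{\bB} - \widetilde{\cH}\bY - \bC\bE_N^T\bY}.
\]
The final step is to recognize $\bE_N^T\bY$ as the $N$th block component $Y_N\in\S$ of $\bY\in\S^N$, so that $\bC\bE_N^T\bY = \bC Y_N$ with the block vector $\bC\in\S^N$ multiplied on the right by the block scalar $Y_N$, giving exactly the claimed expression.

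No genuine obstacle arises; the only thing demanding care is the bookkeeping forced by the noncommutative $^\ast$-algebra: every block scalar ($\blnorm{\bB}$ and $Y_N$) must be kept multiplied from the right onto the block vectors, and one must check that applying $\bE_N^T$ to an element of $\S^N$ returns the correct element of $\S$ (the last block row), so that all sizes and the left/right multiplication conventions line up. Since the preceding lemma was set up precisely so that the correction term $\bC\bE_N^T$ affects only the last block column of $\widetilde{\cH}$, the computation collapses in exactly the same way as in the single-vector case.
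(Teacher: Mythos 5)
Your proposal is correct and follows essentially the same route as the paper, which simply declares the proof analogous to the single-vector case (\Cref{corollary:final-residual-expression}): substitute $\bX_M^{(\ell)}=\widetilde{\bcV}\bY$ and $\bB=\widetilde{\bcV}\bE_1\blnorm{\bB}$, apply the rank-one-updated relation $A\widetilde{\bcV}=\widetilde{\bcV}(\widetilde{\cH}+\bC\bE_N^T)$, and identify $\bE_N^T\bY=Y_N$. Your extra care about the right-multiplication of block scalars and the QR-normalization of $\bB$ is exactly the bookkeeping the paper leaves implicit.
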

\begin{corollary}
	The proof is analogous to that of \Cref{corollary:final-residual-expression}.
\end{corollary}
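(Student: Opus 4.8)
The plan is to follow the proof of \Cref{corollary:final-residual-expression} line for line, replacing the scalar Arnoldi relation by its block counterpart and keeping every $\S$-scalar on the right, as the right vector space formalism of \Cref{section:admissible-convergence-blGMRES} requires. First I would express the block right-hand side in the restarted block Arnoldi basis: in the construction one has $\bB = \bE_1 F_0^{(1)}$ (equivalently $\bB = \bV_1\blnorm{\bB}$ in the basis-transformed version), and $\bV_1$ is the first block column of $\bcV^{(1)}$, which is the leading block of $\widetilde{\bcV}$; hence $\bB = \widetilde{\bcV}\bE_1\blnorm{\bB}$, with $\blnorm{\bB}\in\S^+$ acting from the right.

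Next I would substitute $\bX_M^{(\ell)} = \widetilde{\bcV}\bY$ into $\bR_M^{(\ell)} = \bB - A\bX_M^{(\ell)}$ and invoke the rank-one-updated block Arnoldi relation $A\widetilde{\bcV} = \widetilde{\bcV}\,\widetilde{\cH}\prn{\bC} = \widetilde{\bcV}\prn{\widetilde{\cH} + \bC\bE_N^T}$ from the preceding lemma (the block analogue of \Cref{lemma:rank-one-update-Hessenberg-laststep}). This gives
\begin{align*}
	\bR_M^{(\ell)}
	&=
	\widetilde{\bcV}\bE_1\blnorm{\bB}
	-
	\widetilde{\bcV}\prn{\widetilde{\cH} + \bC\bE_N^T}\bY
	\\
	&=
	\widetilde{\bcV}\prn{\bE_1\blnorm{\bB} - \widetilde{\cH}\bY - \bC\,\bE_N^T\bY}.
\end{align*}
Finally I would identify $\bE_N^T\bY$ with $Y_N$, the $N$th block component of $\bY$, which is exactly the action of the $N$th block standard basis vector $\bE_N\in\S^N$, producing the claimed expression.

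There is no genuine obstacle: the statement is a one-line computation once the rank-one update lemma is available. The only point needing care is bookkeeping in the right vector space over $\S$ — one must not silently commute scalars, so that $\bC Y_N$ (block vector times an $\S$-scalar from the right) and $\widetilde{\cH}\bY$ (a matrix over $\S$ applied to a block vector) are the well-formed operations actually being used. As emphasized in the surrounding remarks, the formula cannot be simplified further, since $\widetilde{\bcV}$ is not column-orthonormal across cycles and therefore cannot be pulled out of a block-norm computation; this is precisely why the dependence of $\blnorm{\bR_M^{(\ell)}}$ on $\bC$ is only indirect.
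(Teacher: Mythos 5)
Your proposal is correct and is exactly the argument the paper intends: the paper's proof is simply the statement that it is analogous to \Cref{corollary:final-residual-expression}, and you have carried out that analogy explicitly, substituting the rank-one-updated block relation $A\widetilde{\bcV}=\widetilde{\bcV}\prn{\widetilde{\cH}+\bC\bE_N^T}$ and reading off $\bE_N^T\bY=Y_N$. Your added care about keeping the $\S$-scalars on the right and noting that $\widetilde{\bcV}$ is not orthonormal across cycles matches the surrounding remarks in the paper.
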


\begin{remark}
	We note that non-zero choices of $\bC$ do not effect any other
	approximation or the previously prescribed residual normalizing
	quantities.  As in the non-block case, it does effect the eigenvalues
	and Ritz values, and it does effect the norm and other properties of
	$\blnorm{\bR_M^{(\ell)}}$.  However, these effects are not easily
	represented as a function of $\bC$, since the columns of $\widetilde{\bcV}$ 
	do not generally form an orthonormal basis.
\end{remark}

\begin{remark}
	We also note that as in the non-block case, if we allow for iterations
	of partial or full stagnation at the end of a restart cycle, this will
	lead to linear dependence among the columns of $\widetilde{\bcV}$.  In
	order to obtain a full factorization that allows us to uniquely
	construct $A$, additional iterations for addition cycles of restarted
	\blgmres will need to be specified, until a full basis for $\S^N$ has
	been generated among the restarted block Arnoldi vectors.
\end{remark}

\section{Conclusions and future work}\label{section:conclusions}

We have extended results from \cite{DuintjerTebbensMeurant:2019:1} to the
restarted \blgmres setting.  To do this, we extended some results from
\cite{DuintjerTebbensMeurant:2019:1} for the non-block restarted \gmres case,
and we also presented new proofs of certain results to facilitate extending
them to the block case.  We note that aspects of our work again demonstrate the
utility of treating the block Krylov iterative methods as iterations over the
right vector space $\S^N$ with scalars from the $^\ast$-algebra $\S=\C^{p\times
p}$, following \cite{KubinovaSoodhalter:2020:1}.  

We note that as in \cite{KubinovaSoodhalter:2020:1}, we are not yet able to
accommodate the occurrence of block Arnoldi breakdown (\ie linear dependence of
the block Arnoldi vectors) in this setup.  The next step will be to understand
how to also prescribe the occurrence of breakdown and to understand what
admissible convergence behavior is in this case, for both restarted and
non-restarted \blgmres.

\section*{Acknowledgements}
The author gratefully acknowledges Marie Kubínová for her significant
contributions to this research, begun during her postdoctoral time before a
transition to work in industry. Her extensive efforts in the initial
development and analysis were essential to the completion of this work.
The author would also like to thank Gerard Meurant for his helpful comments
and pointers to two references.

% Insert the appendix.
\appendix
%%%%% NEW SUB-SECTION %%%%%

\section{Characterizing the \gmres -- \fom relationship}\label{appendix:fom-gmres-relationship}

There is a sibling method to \gmres called the Full Orthogonalization Method (\fom) that is defined by the nonoptimal residual
constraint
\begin{align*}
	\mbox{select}
	\ 
	\bx_j
	\in
	\cK_j(A,\bb)
	\ 
	\mbox{such that}
	\ 
	\bb - A\bx	
	\perp
	\cK_j(A,\bb).
\end{align*}
This is equivalent to computing $\by_j =
H_j^{-1}\paren[auto]{(}{)}{\norm[auto]{\br_0}\be_1}$.  The \gmres and \fom
iterates have a well-documented \emph{peak-plateau} relationship (\cf
\Cref{corollary:residual-proportion}; see, e.g.,
\cite{Brown:1991:1,Walker:1995:1}) that we use in our analysis.  When needed,
we differentiate quantities associated to \gmres and \fom via superscripts for
the \fom quantities; \eg, $\bx_j^{(F)} = V_j\by_j^{(F)}$.

In \Cref{section:prescribing-restarted-gmres} we characterize matrices and right-hand sides with prescribed convergence behavior
of restarted Arnoldi and \gmres. To do this, we establish a result that is a direct corollary of the peak-plateau
relationship of \gmres and \fom \cite{Brown:1991:1,Walker:1995:1}.
\begin{corollary}\label{corollary:residual-proportion}
The components of the \gmres residual in the direction of a particular Arnoldi vector are inversely proportional to the corresponding \fom residual in the following sense
\begin{align*}%\label{eq:prop_proj}
	\dual[auto]{\br_m}{\br_m}^{-1}\dual[auto]{\bv_{i+1}^*\br_m}{\bv_{i+1}^*\br_m}\dual[auto]{\br_m}{\br_m}^{-1} 
	= 
	\dual[auto]{\br_i^{(F)}}{\br_i^{(F)}}^{-1}, \quad i
	=
	0,\ldots,k,
\end{align*}	
which can be rewritten as 
\begin{align*}
	\frac{|\bv_{i+1}^*\br_m|}{\norm[auto]{\br_m}} 
	=
	\frac{\norm[auto]{\br_m}}{\norm[auto]{\br^{(F)}_i}}.
\end{align*}
\end{corollary}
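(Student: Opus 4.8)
The plan is to reduce the statement to the Givens-rotation form of the \gmres residual together with the classical \emph{peak-plateau} relation $\norm[auto]{\br_i^{(F)}} = \norm[auto]{\br_i}/\abs[auto]{c_i}$ of \cite{Brown:1991:1,Walker:1995:1}, where $c_j$ and $s_j$ denote the cosine and sine of the $j$th Givens rotation used to triangularize $\underline{H_m}$. First I would recall that if $\underline{H_m}=Q_m\underline{R_m}$ is the Givens \qr-factorization, with $Q_m = G_1^\ast G_2^\ast\cdots G_m^\ast$ and each $G_j$ acting in the $(j,j+1)$-plane, then the least-squares characterization \eqref{eq:gmres-ls} yields the closed form $\br_m = \rho_m V_{m+1}Q_m\be_{m+1}$, where $\rho_m$ is the last entry of $Q_m^\ast\be_1\norm[auto]{\br_0}$ and $\abs[auto]{\rho_m}=\norm[auto]{\br_m}$. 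Consequently the component of $\br_m$ along a single Arnoldi vector $\bv_{i+1}$ is $\rho_m$ times the $(i+1,m+1)$-entry of $Q_m$, so the whole identity reduces to identifying the last column of $Q_m$.

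The main computational step is to evaluate $Q_m\be_{m+1}$. Applying the factors in the order $G_m^\ast, G_{m-1}^\ast,\ldots,G_1^\ast$, a short induction (each rotation spreads the current ``top'' entry one row up, attaching a factor $-\bar s_j$, and is frozen by the next rotation with a factor $c_{j-1}$) shows that the $(i+1)$st entry of $Q_m\be_{m+1}$ equals, up to a unimodular factor, $c_i\prod_{j=i+1}^{m}s_j$ for $1\le i\le m$ — with the natural convention $c_0:=1$ covering the first row — while the $(m+1)$st entry is $c_m$. Hence
\[
	\abs[auto]{\bv_{i+1}^\ast\br_m}
	=
	\norm[auto]{\br_m}\,\abs[auto]{c_i}\prod_{j=i+1}^{m}\abs[auto]{s_j},
	\qquad i=0,\ldots,k.
\]
I expect this Givens bookkeeping — tracking the alternating signs and confirming the convention under which $\abs[auto]{\rho_m}=\norm[auto]{\br_m}$ — to be the only genuine obstacle; it is routine but must be done with care.

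It then remains to assemble. Using the standard identity $\norm[auto]{\br_j}=\norm[auto]{\br_0}\prod_{t=1}^{j}\abs[auto]{s_t}$ gives $\prod_{j=i+1}^{m}\abs[auto]{s_j}=\norm[auto]{\br_m}/\norm[auto]{\br_i}$, so the displayed formula becomes $\abs[auto]{\bv_{i+1}^\ast\br_m}=\abs[auto]{c_i}\,\norm[auto]{\br_m}^2/\norm[auto]{\br_i}$. Substituting the peak-plateau relation $\norm[auto]{\br_i}=\abs[auto]{c_i}\,\norm[auto]{\br_i^{(F)}}$ of \cite{Brown:1991:1,Walker:1995:1} cancels the $\abs[auto]{c_i}$ and yields
\[
	\frac{\abs[auto]{\bv_{i+1}^\ast\br_m}}{\norm[auto]{\br_m}}
	=
	\frac{\norm[auto]{\br_m}}{\norm[auto]{\br_i^{(F)}}},
\]
which is the second stated form; squaring and rearranging then produces the first (``sandwich'') form written in terms of $\dual[auto]{\cdot}{\cdot}$. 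The degenerate case $c_i=0$, i.e.\ a \fom breakdown at step $i$, is consistent with the convention $\norm[auto]{\br_i^{(F)}}=+\infty$, since then both sides vanish. As a sanity check one may confirm the telescoped identity $\norm[auto]{\br_m}^{-2}=\sum_{i=0}^{m}\norm[auto]{\br_i^{(F)}}^{-2}$ against the Arnoldi-basis expansion $\norm[auto]{\br_m}^2=\sum_{i=0}^{m}\abs[auto]{\bv_{i+1}^\ast\br_m}^2$, although that route only recovers the identity after summation over $i$ and so does not replace the componentwise argument above.
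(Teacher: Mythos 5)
Your proof is correct, and it reaches the identity by a genuinely different route than the paper intends. The paper treats this as an immediate consequence of its own machinery: by \eqref{eq:residual-coeff} the coefficients of $r_m$ in the Arnoldi basis are (conjugates of) the first-row entries of $(D_mU_m)^{-1}$ scaled by $\lVert r_m\rVert^2$, by \eqref{eq:first_row_form} the $(i+1)$st such entry has modulus $\sqrt{\lVert r_i\rVert^{-2}-\lVert r_{i-1}\rVert^{-2}}$, and the peak--plateau relation of Brown/Walker identifies this square root with $\lVert r_i^{(F)}\rVert^{-1}$; no Givens rotations appear. You instead re-derive the component formula from scratch via the rotation-based \qr\ of $\underline{H_m}$, obtaining $|v_{i+1}^\ast r_m|=\lVert r_m\rVert\,|c_i|\prod_{j=i+1}^m|s_j|$, and then combine $\lVert r_j\rVert=\lVert r_0\rVert\prod_{t\le j}|s_t|$ with $\lVert r_i^{(F)}\rVert=\lVert r_i\rVert/|c_i|$. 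The two arguments are equivalent in substance (both ultimately rest on the peak--plateau relation, and your $|c_i|\prod_{j>i}|s_j|$ is exactly the modulus of the first-row entry of $(D_mU_m)^{-1}$ times $\lVert r_m\rVert^2$), but yours is self-contained, makes the roles of the rotation cosines and sines explicit, and handles the degenerate case $c_i=0$ (\fom\ breakdown, total stagnation of \gmres\ at step $i$) transparently; the paper's route is shorter because it leans on \eqref{eq:residual-coeff}, which was proved precisely so that it generalizes to the block setting, something the Givens bookkeeping does not do as directly. Two small points to tidy: your index range $i=0,\ldots,m$ subsumes the statement's $i=0,\ldots,k$, and you should state the convention $c_0:=1$ once, since it is what makes the $i=0$ case $|v_1^\ast r_m|/\lVert r_m\rVert=\lVert r_m\rVert/\lVert r_0\rVert$ come out with $\lVert r_0^{(F)}\rVert=\lVert r_0\rVert$.
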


The following corollary (to \Cref{lemma:block-residual-coefficients}) follows then directly from the peak plateau relation.
\begin{corollary}
The components of the \blgmres residual in the direction of a particular Arnoldi vector are inversely proportional to the corresponding \blfom residual in the following sense
\begin{align*}%\label{eq:prop_proj_block}
	&
	\bldual[auto]{\bR^G_k}{\bR^G_k}^{-1}
	\bldual[auto]{\bV_{j+1}^*\bR^G_k}{\bV_{j+1}^*\bR^G_k}
	\bldual[auto]{\bR^G_k}{\bR^G_k}^{-1} 
	= 
	\bldual[auto]{\bR_j^{(F)}}{\bR_j^{(F)}}^{-1}, 
\end{align*}
for all $j = 0,\ldots,k$.
\end{corollary}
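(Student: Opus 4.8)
The plan is to read the identity off \Cref{lemma:block-residual-coefficients} together with the explicit form of the first block row of $(\cD\cU)^{-1}$ recalled above from \cite[Remark~4.2]{KubinovaSoodhalter:2020:1}, and then to finish with the block analogue of the \gmres--\fom peak--plateau relation, exactly as \Cref{corollary:residual-proportion} is obtained in the single-vector case.

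First I would invoke \Cref{lemma:block-residual-coefficients}: since $\bR^G_k=\bcV^{(k+1)}\bG_k$ with $\bG_k=\brak{\bE_1^T(\cD^{(k+1)}\cU^{(k+1)})^{-1}}^\ast\bldual[auto]{\bR^G_k}{\bR^G_k}$ and the columns of $\bcV^{(k+1)}$ are orthonormal, the component of $\bR^G_k$ along $\bV_{j+1}$ is the $(j+1)$st block entry of $\bG_k$, namely
\begin{align*}
	\bV_{j+1}^\ast\bR^G_k
	=
	\brak{\bE_1^T(\cD^{(k+1)}\cU^{(k+1)})^{-1}\bE_{j+1}}^\ast\bldual[auto]{\bR^G_k}{\bR^G_k}.
\end{align*}
Because the principal block submatrices of $\cH$ inherit the factorization $\cD_j\cU_j\cC_j(\cD_j\cU_j)^{-1}$, the recalled formula applies, giving $\bE_1^T(\cD^{(k+1)}\cU^{(k+1)})^{-1}\bE_{j+1}=\sqrt{\bldual[auto]{F_j}{F_j}^{-1}-\bldual[auto]{F_{j-1}}{F_{j-1}}^{-1}}^{\,\ast}Q_{j+1}$ with $Q_{j+1}$ unitary, where $F_i=\blnorm{\bR^G_i}$ (and for $j=0$ the right-hand side reduces to $F_0^{-1}$, the $F_{-1}$ term being read as zero). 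Substituting this, and using that $\bldual[auto]{\bR^G_k}{\bR^G_k}$ is Hermitian, $Q_{j+1}Q_{j+1}^\ast=I$, and $\sqrt{X}^\ast\sqrt{X}=X$, I obtain
\begin{align*}
	\bldual[auto]{\bR^G_k}{\bR^G_k}^{-1}
	\bldual[auto]{\bV_{j+1}^\ast\bR^G_k}{\bV_{j+1}^\ast\bR^G_k}
	\bldual[auto]{\bR^G_k}{\bR^G_k}^{-1}
	=
	\bldual[auto]{F_j}{F_j}^{-1}-\bldual[auto]{F_{j-1}}{F_{j-1}}^{-1},
\end{align*}
the two outer factors cancelling the conjugations produced in the middle term.

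It then remains to identify the right-hand side with $\bldual[auto]{\bR_j^{(F)}}{\bR_j^{(F)}}^{-1}$, which is precisely the block version of the peak--plateau relation. Running the argument behind \Cref{corollary:residual-proportion} inside the $^\ast$-algebra $\S$ (using the block Arnoldi--Hessenberg structure, the characterization of \blfom via $\cH_j\bY=\bE_1 F_0$, and the block minimization of \Cref{theorem:block-polynomial-minimization-blgmres}) yields $\bldual[auto]{\bR_j^{(F)}}{\bR_j^{(F)}}^{-1}=\bldual[auto]{\bR^G_j}{\bR^G_j}^{-1}-\bldual[auto]{\bR^G_{j-1}}{\bR^G_{j-1}}^{-1}$, and substituting finishes the proof. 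I expect this last identification to be the main obstacle: one must guarantee that the relevant \blfom iterate is well defined (no block \fom breakdown at step $j$, which is automatic under the assumed strict decrease of the block residual normalizing quantities) and verify that the scalar peak--plateau computation transfers term by term to the noncommutative setting; everything before it is routine manipulation of the block inner product.
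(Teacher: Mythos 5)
Your argument is correct and follows essentially the same route as the paper, which states this corollary as an immediate consequence of \Cref{lemma:block-residual-coefficients} (the first block row of $(\cD\cU)^{-1}$ giving the residual components) combined with the block peak--plateau relation; your write-up simply makes that one-line deduction explicit, including the cancellation of the unitary factors $Q_{j+1}$ and the $j=0$ case. Your closing caveat about existence of the \blfom iterate is also consistent with the paper, which handles it in the subsequent appendix lemma linking partial stagnation to singularity of the difference of inverse block Gram matrices.
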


Partial stagnation can then be related to the characteristic peak-plateau behavior.

%%%%%%%%%% LEMMA %%%%%%%%%%

\begin{lemma}
	Stagnation with respect to $\bu$ appears if and only if the difference
	\begin{align*}
		\bldual[auto]{\bR^{(k)}_{m}}{\bR^{(k)}_{m}}^{-1} 
		- 
		\bldual[auto]{\bR^{(k)}_{m-s}}{\bR^{(k)}_{m-s}}^{-1}
	\end{align*}
	is singular (and
	therefore the \blfom solution does not exist).
\end{lemma}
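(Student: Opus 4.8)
The plan is to pass the stagnation condition through the minimum-norm characterization of the \blgmres residual inside a single cycle, then reduce the claimed singularity to that of $\bldual[auto]{\bR_m^{(k)}}{\bR_m^{(k)}}-\bldual[auto]{\bR_{m-s}^{(k)}}{\bR_{m-s}^{(k)}}$ via an elementary identity, and only at the end invoke the block peak--plateau relation to justify the parenthetical remark about \blfom. Fix the cycle index $k$ and work inside it, so $\bR_0^{(k)}$ plays the role of the right-hand side and $\K_j$ abbreviates $\K_j\prn{A,\bR_0^{(k)}}$. For $\bu\in\C^p$ the vector $\bR_j^{(k)}\bu$ lies in the affine set $\bR_0^{(k)}\bu+A\K_j$, and since the \blgmres orthogonality $\bR_j^{(k)}\perp A\K_j$ in the block inner product passes to linear combinations of columns, $\bR_j^{(k)}\bu$ is orthogonal in the ordinary inner product of $\C^n$ to $A\K_j$; hence it is the unique minimum-norm element of that affine set. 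Because these affine sets grow with $j$, we get $\norm[auto]{\bR_{m-s}^{(k)}\bu}\ge\norm[auto]{\bR_m^{(k)}\bu}$, with equality iff $\bR_{m-s}^{(k)}\bu=\bR_m^{(k)}\bu$ (and then the residual-in-$\bu$ vectors at all intermediate steps coincide too). As $A$ is nonsingular, $\bR_m^{(k)}\bu=\bR_{m-s}^{(k)}\bu$ is equivalent to $\bX_m^{(k)}\bu=\bX_{m-s}^{(k)}\bu$, \ie to stagnation with respect to $\bu$ over the last $s$ steps in the sense of \Cref{definition:block-stagnation}. Moreover, setting $\bq:=\bR_m^{(k)}\bu=\bR_{m-s}^{(k)}\bu$ and using that $\bR_m^{(k)}-\bR_{m-s}^{(k)}\in A\K_m$ is orthogonal to $\bR_m^{(k)}$, one gets $\prn{\bR_m^{(k)}-\bR_{m-s}^{(k)}}^{*}\bq=\bnull$, hence the \emph{vector} identity $\bldual[auto]{\bR_m^{(k)}}{\bR_m^{(k)}}\bu=\bldual[auto]{\bR_{m-s}^{(k)}}{\bR_{m-s}^{(k)}}\bu$; conversely this vector identity gives equality of the associated quadratic forms, hence of the norms, hence (by the minimum-norm uniqueness) equality of the vectors. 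Writing $G_j:=\bldual[auto]{\bR_j^{(k)}}{\bR_j^{(k)}}$, which is \hpd since the block residual normalizing quantities lie in $\S^+$, we conclude: a stagnation direction $\bu$ over the last $s$ steps of cycle $k$ exists if and only if $G_m-G_{m-s}$ is singular.

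To close the main equivalence I would use the identity $G_m^{-1}-G_{m-s}^{-1}=G_m^{-1}\prn{G_{m-s}-G_m}G_{m-s}^{-1}$: as $G_m^{-1}$ and $G_{m-s}^{-1}$ are invertible, the left side is singular exactly when $G_{m-s}-G_m$ is, which together with the previous paragraph shows that stagnation with respect to some $\bu$ appears iff $\bldual[auto]{\bR_m^{(k)}}{\bR_m^{(k)}}^{-1}-\bldual[auto]{\bR_{m-s}^{(k)}}{\bR_{m-s}^{(k)}}^{-1}$ is singular. For the parenthetical remark, I would telescope $G_m^{-1}-G_{m-s}^{-1}=\sum_{j=m-s+1}^{m}\prn{G_j^{-1}-G_{j-1}^{-1}}$ and, using the first-row formula for $\prn{\cD\cU}^{-1}$ together with the block analogue of the peak--plateau corollary to \Cref{lemma:block-residual-coefficients}, identify each summand with the positive semidefinite matrix $\bldual[auto]{\bR_j^{(F)}}{\bR_j^{(F)}}^{-1}$ associated with the $j$th \blfom step of the cycle, which is singular precisely when that \blfom iterate fails to exist. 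Since a sum of positive semidefinite matrices is singular exactly when their null spaces share a nonzero vector, singularity of $G_m^{-1}-G_{m-s}^{-1}$ forces every summand to be singular, so the \blfom iterate does not exist at each of steps $m-s+1,\dots,m$ of the cycle.

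The step I expect to be the main obstacle is the first paragraph: converting the iterate-level definition of partial stagnation \emph{faithfully} into the vector identity $G_m\bu=G_{m-s}\bu$, rather than merely into the scalar identity $\bu^{*}G_m\bu=\bu^{*}G_{m-s}\bu$ --- these are not equivalent for general \hpd matrices, and it is exactly the \blgmres orthogonality relation that bridges the gap --- while keeping the minimum-norm/uniqueness argument anchored to the correct per-cycle space $A\K_m\prn{A,\bR_0^{(k)}}$ so that the orthogonality is actually available. The identity and the telescoping in the second paragraph are then routine; the only point needing care there is that the ambient unitary factors $Q_j$ in the first-row formula enter only through a congruence and so do not affect singularity.
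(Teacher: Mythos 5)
Your proposal is correct, and it arrives at the same two-stage reduction as the paper's proof: first show that a stagnation direction $\bu$ exists iff $G_{m-s}-G_m$ is singular, where $G_j:=\bldual[auto]{\bR_j^{(k)}}{\bR_j^{(k)}}$, and then transfer that singularity to $G_m^{-1}-G_{m-s}^{-1}$. Both stages, however, are executed by different means. For the first stage the paper simply notes that $G_{m-s}-G_m$ is positive semidefinite, so the vanishing of the quadratic form $\bu^\ast(G_{m-s}-G_m)\bu$ already forces $(G_{m-s}-G_m)\bu=\bnull$; your detour through the residual orthogonality condition (that $\bR_m^{(k)}-\bR_{m-s}^{(k)}$ lies in $A\K_m$ and is therefore block-orthogonal to $\bR_m^{(k)}$) is also valid and in effect re-derives that semidefiniteness, so the ``main obstacle'' you flag is not actually an obstacle --- the scalar and vector identities are equivalent here precisely because the difference is semidefinite. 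For the second stage the paper runs a chain of substitutions that exhibits a null vector of $G_m^{-1}-G_{m-s}^{-1}$ explicitly, but only in the forward direction; your congruence $G_m^{-1}-G_{m-s}^{-1}=G_m^{-1}\prn{G_{m-s}-G_m}G_{m-s}^{-1}$ is cleaner and delivers the full equivalence (equality of ranks) in one stroke, which the paper's proof as written leaves implicit. Finally, the paper dispatches the parenthetical claim about \blfom with a one-line appeal to the peak--plateau relation, whereas your telescoping of $G_m^{-1}-G_{m-s}^{-1}$ into the positive semidefinite summands $\bldual[auto]{\bR_j^{(F)}}{\bR_j^{(F)}}^{-1}$ is a genuine sharpening: it shows the \blfom iterate fails to exist (in the direction $\bu$) at each of the steps $m-s+1,\dots,m$, not merely at one of them.
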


%%%%%%%%%% PROOF %%%%%%%%%%

\begin{proof}
	Assume without loss of generality $k=1$. Block stagnation with respect to $\bu$ can be recast as 
	\begin{align*}%\label{eq:stagnation_subtract}
		\bu^*\left(\bldual[auto]{\bR^{(1)}_{m-s}}{\bR^{(1)}_{m-s}} 
		-
		\bldual[auto]{\bR^{(1)}_{m}}{\bR^{(1)}_{m}}\right)\bu 
		=
		0.
	\end{align*}
	Since 
	\begin{math}
		\bldual[auto]{\bR^{(1)}_{m-s}}{\bR^{(1)}_{m-s}} 
		-
		\bldual[auto]{\bR^{(1)}_{m}}{\bR^{(1)}_{m}}
	\end{math}
	is positive semidefinite, we have
	\begin{align*}%\label{eq:stagnation_subtract}
		\prn{
			\bldual[auto]{\bR^{(1)}_{m-s}}{\bR^{(1)}_{m-s}} 
			-
			\bldual[auto]{\bR^{(1)}_{m}}{\bR^{(1)}_{m}}
		}
		\bu 
		=& 
		\bnull
		\\
		\bldual[auto]{\bR^{(1)}_{m-s}}{\bR^{(1)}_{m-s}}\bu 
		=&  
		\bldual[auto]{\bR^{(1)}_{m}}{\bR^{(1)}_{m}}\bu
		\\
		\bu 
		=&  
		\bldual[auto]{\bR^{(1)}_{m-s}}{\bR^{(1)}_{m-s}}^{-1}
		\bldual[auto]{\bR^{(1)}_{m}}{\bR^{(1)}_{m}}\bu
		\\
		\bldual[auto]{\bR^{(1)}_{m}}{\bR^{(1)}_{m}}^{-1}
		\underbrace{
			\bldual[auto]{\bR^{(1)}_{m}}{\bR^{(1)}_{m}}\bu
		}_{\bu} 
		=&  
		\bldual[auto]{\bR^{(1)}_{m-s}}{\bR^{(1)}_{m-s}}^{-1}
		\underbrace{
			\bldual[auto]{\bR^{(1)}_{m}}{\bR^{(1)}_{m}}\bu
		}_{\bu}
		\\
		\prn{
			\bldual[auto]{\bR^{(1)}_{m}}{\bR^{(1)}_{m}}^{-1}
			-
			\bldual[auto]{\bR^{(1)}_{m-s}}{\bR^{(1)}_{m-s}}^{-1}
		}
		\bu
		=&
		\bnull.
	\end{align*}
	Therefore 
	\begin{math}
		\bldual[auto]{\bR^{(k)}_{m}}{\bR^{(k)}_{m}}^{-1} 
		- 
		\bldual[auto]{\bR^{(k)}_{m-s}}{\bR^{(k)}_{m-s}}^{-1}
	\end{math}
	is singular. The non-existence of the \blfom solution follows from the peak-plateau relation.
\end{proof}

% Insert the bibliography.
\printbibliography

\end{document}